\renewcommand{\attains}[1]{\stackrel[#1\,]{}{\to}}
\title{Quantitative convergence analysis of iterated expansive, set-valued mappings}
\author{D. Russell Luke\thanks{Institut f\"ur Numerische und Angewandte Mathematik,
							   Universit\"at G\"ottingen, 37083 G\"ottingen, Germany.
							   DRL was supported in part by German Israeli Foundation Grant G-1253-304.6 and Deutsche Forschungsgemeinschaft Collaborative Research Center SFB755 
							   E-Mail:~\href{mailto:r.luke@math.uni-goettingen.de}{r.luke@math.uni-goettingen.de}}
		 \and
		Nguyen H. Thao\thanks{Institut f\"ur Numerische und Angewandte Mathematik,
							  Universit\"at G\"ottingen, 37083 G\"ottingen, Germany.
							  NHT was supported by German Israeli Foundation Grant G-1253-304.6.
							  E-Mail:~\href{mailto:h.nguyen@math.uni-goettingen.de}{h.nguyen@math.uni-goettingen.de}}
		 \and 
		Matthew K. Tam\thanks{Institut f\"ur Numerische und Angewandte Mathematik,
							  Universit\"at G\"ottingen, 37083 G\"ottingen, Germany.
							  MKT was supported by Deutsche Forschungsgemeinschaft Research Training Grant 2088.
							  E-Mail:~\href{mailto:m.tam@math.uni-goettingen.de}{m.tam@math.uni-goettingen.de}}
		}
\begin{document}

\maketitle

\begin{abstract}

We develop a framework for quantitative convergence analysis of Picard iterations of expansive
set-valued fixed point mappings.   There are two key components of the analysis.  The first is a natural 
generalization of single-valued averaged mappings to expansive, set-valued mappings that 
characterizes a type of strong calmness of the fixed point mapping.  The second component 
to this analysis is an extension of the well-established notion of 
metric subregularity -- or inverse calmness -- of the mapping at fixed points.  Convergence
of expansive fixed point iterations is proved using these two properties, and quantitative 
estimates are a natural byproduct of the framework.    
To demonstrate the application of the theory, we prove for the first time a number
of results showing local linear convergence of nonconvex cyclic projections for inconsistent (and consistent) 
feasibility problems, local linear convergence of the forward-backward algorithm for 
structured optimization without convexity, strong or otherwise, and local linear 
convergence of the Douglas--Rachford algorithm for structured nonconvex minimization.   
This theory includes earlier approaches for known results, convex and nonconvex, as 
special cases.
\end{abstract}

{\small \noindent {\bfseries 2010 Mathematics Subject
Classification:} {Primary 49J53, 65K10
Secondary 49K40, 49M05, 49M27, 65K05, 90C26.\\
}}

\noindent {\bfseries Keywords:}
Almost averaged mappings, averaged operators, calmness, cyclic projections, 
elemental regularity, feasibility, fixed points, forward-backward, Douglas--Rachford, H\"older regularity, hypomonotone, 
Krasnoselski-Mann iteration, Picard iteration, 
piecewise linear-quadratic, 
polyhedral mapping, 
metric regularity,  metric subregularity, nonconvex, nonexpansive,   structured optimization,
submonotone,
subtransversality, transversality

\section{Introduction}
We present a program of analysis that enables one to quantify the rate of convergence of 
sequences generated by fixed point iterations of expansive, set-valued mappings.  
The framework presented here subsumes earlier approaches for analyzing fixed point iterations of relaxed  
nonexpansive mappings and opens up new results for {\em expansive} mappings.  Our 
approach has its roots in the pioneering work of Mann, Krasnoselski, Edelstein, Gurin\footnote{We learned
from Alex Kruger that Gurin's name was misprinted as Gubin in the translation of his work into English.}, 
Polyak and Raik who wrote seminal papers in the analysis of (firmly) 
nonexpansive and averaged mappings
\cite{mann1953mean, krasnoselski1955, edelstein1966, Gubin67} although the terminology ``averaged" wasn't coined 
until sometime later \cite{BaiBruRei78}.  Our strategy is also 
indebted to the developers of notions of stability, in particular metric regularity and 
its more recent refinements \cite{Penot89, Aze06, DontchevRockafellar14, Ioffe11, Ioffe13}.
We follow a pattern of proof used in \cite{HesseLuke13} and \cite{ACL15} for Picard iterations of set-valued mappings, 
though this approach was actually inspired by  the analysis of alternating projections in \cite{Gubin67}. 

The idea is to isolate two properties of the fixed point mapping.  The first property is a generalization of the averaging
property, what we call {\em almost averaging}.   When a self-mapping is averaged and fixed points 
exist, then the Picard iteration converges to a fixed point (weakly in the Hilbert space setting) 
without any additional assumptions. (See \cite[Theorem 3]{Opial67}.  See also \cite[3. Satz]{Schaefer57} 
for the statement under the assumption that the mapping is weakly continuous.)  
In order to {\em quantify} convergence, a second property is needed.  In their 
analysis of Krasnoselski-Mann relaxed cyclic projections for convex feasibility, Gurin, Polyak 
and Raik assume that the set-intersection has interior \cite[Theorem 1]{Gubin67}.  Interiority is an assumption about 
{\em stability} of the fixed points of the mapping, and this generalizes
considerably.  Even if rates of convergence are not the primary interest, if the averaging property 
is relaxed in any meaningful way, monotonicity of Picard iterations with respect to the 
set of fixed points is lost.  In order to  recover convergence in this case, we appeal 
to stability of the set of fixed points to overcome 
the lack of monotonicity of the fixed point mapping.  The second property we require of the mapping 
is a characterization of the needed stability at fixed points.  Metric {\em subregularity} of the 
mapping at fixed points is one well-established notion that fulfills this stability and provides 
quantitative estimates for the rate of convergence of the iterates.  This is closely 
related (actually synonymous) to the existence of {\em error bounds}.  The almost averaging 
and the stability properties are defined and quantified on local neighborhoods, but 
our approach is not asymptotic.  Indeed, when convexity or nonexpansivity is assumed, these local 
neighborhoods extend to the whole space and the corresponding results are global and recover the 
classical results.   

We take care to introduce the notions of almost averaging, stability and metric subregularity, 
and to present the most general abstract results  in Section \ref{s:gen thry}.  
Almost averaged mappings are developed first in Section \ref{s:av}, after which abstract convergence 
results are presented in Section \ref{s:abs}.  In Section \ref{s:mr} the notion of metric regularity and 
its variants is presented and applied to the abstract results of Section \ref{s:abs}. 
The rest of the paper, Section \ref{s:apps}, is a tutorial on the application of these 
ideas to quantitative convergence analysis of algorithms for, respectively, nonconvex and inconsistent 
feasibility (Section  \ref{s:feas}) and structured optimization (Section \ref{s:str opt}).   
We focus our attention on just a few simple algorithms, namely cyclic 
projections, projected gradients and Douglas--Rachford.

Among the new and recent concepts are:  almost 
nonexpansive/averaged mappings (Section \ref{s:av}), which 
are a generalization of averaged mappings \cite{BaiBruRei78} and satisfy a 
type of strong calmness of  set-valued mappings; submonotonicity of set-valued self-mappings
(Definition \ref{d:sub/hypomonotone}), which is equivalent to almost firm-nonexpansiveness of 
 their resolvents (Proposition~\ref{t:firmlynonexpansive}) generalizing Minty's classical identification 
of monotone mappings with firmly-nonexpansive
 resolvents \cite{Minty62, Reich77}; elementally subregular sets (Definition \ref{d:set regularity} from 
\cite[Definition 5]{KruLukNgu16});  
subtransversality of collections of sets at points of nonintersection 
(Definition \ref{d:(s)lf}); and 
 gauge metric subregularity (Definition \ref{d:(str)metric (sub)reg} from \cite{Ioffe11, Ioffe13}).   
These objects are applied to obtain a number of new results:  
local linear convergence of nonconvex cyclic projections for inconsistent 
feasibility problems (Theorem \ref{t:cp ncvx}) with 
 some surprising special cases like two nonintersecting circles (Example \ref{eg:circles}) and practical 
(inconsistent) phase retrieval (Example \ref{eg:pr cp});  global R-linear convergence of cyclic projections
onto convex sets (Corollary \ref{t:cp cvx}); 
local linear convergence of forward-backward-type algorithms without convexity or strong monotonicity
(Theorem \ref{t:linear convergence of fb}); local linear convergence of the Douglas--Rachford 
algorithm for structured nonconvex optimization (Theorem \ref{t:RAAR pr})  and a 
specialization to the relaxed averaged alternating reflections (RAAR) algorithm \cite{Luke05a, Luke08} 
for inconsistent phase retrieval (Example \ref{eg:RAAR pr}).  

The quantitative convergence results presented here 
focus on linear convergence, but this framework is appropriate for a wider range of behaviors, 
particularly sublinear convergence.
The emphasis on linear convergence is in part due to its simplicity, but also because it is surprisingly 
prevalent in first order algorithms for common problem structures (see the discussions of phase
retrieval in Examples \ref{eg:pr cp} and \ref{eg:RAAR pr}).  To be sure, there are constants 
 that would, if known, determine the  exact rate, and these  
are either hard or impossible to calculate.  But in many instances 
the {\em order} of convergence -- linear or sublinear -- can be determined a priori.  As such, {\em a posteriori
error bounds} can be estimated in some cases, with the usual epistemological caveats, from the observed behavior of the algorithm.   For 
problems where the {\em solution} to the underlying variational problem, as opposed to its {\em optimal 
value}, is the only meaningful result of the numerical algorithm, such error bounds are essential.  
One important example is image processing with statistical constraints studied in \cite{ACL15} and \cite{LukShe17}.  
Here the images are physical measurements and solutions to the variational image processing 
problems have a quantitative statistical interpretation in terms of the experimental data.  
In contrast, the more common analysis determining that an algorithm for computing these solutions 
merely converges, or even that the {\em objective value} 
converges at a given rate, leads unavoidably to vacuous assurances. 

\subsection{Basic definitions and notation}
The setting throughout this work is a finite dimensional Euclidean space $\Ebb$.  
The norm $\|\cdot\|$ denotes the Euclidean norm.  
The open unit ball and the unit sphere in a Euclidean space are denoted $\Ball$ and $\Sbb$, respectively.
$\Ball_\delta(x)$ stands for the open ball with radius $\delta>0$ and center $x$.
We denote the extended reals by 
$\extre\equiv \Rbb\cup\{+\infty\}$.  
The domain of a function 
$\map{f}{U}{\extre}$ is defined by $\dom f= \{u\in \Ebb: f(u)<+\infty\}$.
The 
{\em subdifferential} of $f$ at $\xbar\in\dom f$, for our purposes, can be defined by 
\begin{equation}\label{e:lsd}
\sd f(\xbar)\equiv \klam{v~:~\exists v^k\to v \mbox{ and } x^k\attains{f}\xbar
\mbox{ such that }f(x)\geq f(x^k) + \ip{v^k}{x-x^k} + o(\|x-x^k\|)}.
\end{equation}
Here the notation $x^k\attains{f}\xbar$ means that $x^k\to\xbar\in\dom f$ and $f(x^k)\to f(\xbar)$.  When $f$ is convex, \eqref{e:lsd} reduces to the 
usual convex subdifferential given by 
\begin{equation}
   \sd f(\xbar)\equiv\set{v\in U}{\left\langle v, x-\xbar\right\rangle\leq f(x)-f(\xbar), \mbox{ for all }x\in U}.
\end{equation}%
 When $\xbar\notin \dom f$ the subdifferential is defined to be empty.  Elements of the subdifferential 
are called {\em subgradients}. 

A set-valued mapping $T$ from $\Ebb$ to another Euclidean space $\Ybb$ is denoted 
$\mmap{T}{\Ebb}{\Ybb}$ and its \emph{inverse} is given by
\begin{equation}
T^{-1}(y)\equiv \set{x\in \Ebb}{y\in T(x)}.
\end{equation}%
The mapping $\mmap{T}{\Ebb}{\Ebb}$ is said to be {\em monotone} on $\Omega\subset\Ebb$ if
\begin{equation}%
 \langle T(x)-T(y), x-y \rangle \geq 0~~~~\forall~x,y\in\Omega.
\end{equation}%
$T$ is called {\em strongly monotone} on $\Omega$ if there exists a $\tau>0$ such that
\begin{equation}%
 \langle T(x)-T(y), x-y \rangle \geq \tau \|x-y\|^2~~~~\forall~x,y\in\Omega.
\end{equation}%
A {\em maximally monotone} mapping is one whose graph cannot be augmented by any more points without violating 
monotonicity.  The subdifferential of a proper, l.s.c., convex function, for example,  is a 
maximally monotone set-valued mapping \cite[Theorem~12.17]{VA}.    
We denote the {\em resolvent} of $T$ by
$J_{T}\equiv \left(\Id+T\right)^{-1}$ where $\Id$ denotes the identity mapping. The corresponding {\em reflector} 
is defined by  $R_{T}\equiv 2J_{T}-\Id$.  A basic and fundamental fact is that the resolvent of a monotone mapping 
is firmly nonexpansive and hence single-valued \cite{Minty62, BruRei77}.
Of particular interest are {\em polyhedral} (or {\em piecewise polyhedral} \cite{VA}) mappings, that is, 
mappings $\mmap{T}{\Ebb}{\Ybb}$ whose graph is the union of finitely many sets that 
 are polyhedral convex in $\Ebb\times \Ybb$ \cite{DontchevRockafellar14}.

Notions of {\em continuity} of set-valued mappings have been thoroughly developed over the last 
$40$ years. Readers are referred to the monographs 
\cite{AubinFrankowska90, VA, DontchevRockafellar14} for basic results.
A mapping $\mmap{T}{\Ebb}{\Ybb}$ is said to be {\em Lipschitz continuous} if it is closed-valued and
there exists a $\tau\geq 0$ such that, for all $u, u' \in \Ebb$,
 \begin{equation}%
  T(u')\subset T (u) + \tau \|u'-u\| \Bbb. 
 \end{equation}%
 Lipschitz continuity is, however, too strong a notion for set-valued mappings. 
We will mostly only require {\em calmness}, which is a pointwise version of 
Lipschitz continuity.  A mapping $\mmap{T}{\Ebb}{\Ybb}$ is said to be {\em calm} 
at $\ubar$ for $\vbar$ if $(\ubar,\vbar)\in\gph T$ and there is a constant $\kappa$ 
together with neighborhoods $U\times V$ of $(\ubar,\vbar)$ such that 
 \begin{equation}
   T(u)\cap V \subset T(\ubar) + \kappa\|u-\ubar\|\quad\forall~u\in U.
\end{equation}%
When $T$ is single-valued, calmness
is just pointwise Lipschitz continuity:
\begin{equation}
   \norm{T(u)- T(\ubar)}\leq \kappa\|u-\ubar\|\quad\forall~u\in U.
\end{equation}%

Closely related to calmness is  
{\em metric subregularity}, which 
can be understood as the property corresponding to a calmness of the inverse mapping.  
As the name suggests, it is a weaker property than {\em metric regularity} 
which, in the case of an $n\times m$ matrix for instance ($m\leq n$), is equivalent to surjectivity. 
Our definition follows the characterization of this property given in \cite{Ioffe11, Ioffe13}, and 
appropriates the terminology of  \cite{DontchevRockafellar14} with slight but significant variations.  
The {\em graphical derivative} of a mapping $\mmap{T}{\Ebb}{\Ybb}$
at a point $(x,y)\in\gph T$ is denoted $\mmap{DT(x|y)}{\Ebb}{\Ybb}$ and defined as the mapping 
whose graph is the tangent cone to $\gph T$ at $(x, y)$ (see \cite{AUB1980} where it is called the 
contingent derivative). That is,
\begin{equation}\label{e:gder}
   v\in DT(x|y)(u)\quad\iff\quad (u,v)\in \Tcal_{\gph T}(x,y)
\end{equation}
 where $\Tcal_{\Omega}$ is the tangent cone mapping associated with the set $\Omega$ defined by
\begin{equation}
\Tcal_{\Omega}(\xbar)\equiv \set{w}{\frac{(x^k-\xbar)}{\tau}\to w\quad\mbox{ for some }\quad x^k\attains{\Omega}\xbar,~
\tau\searrow 0}.
\end{equation}
Here the notation  $x^k\attains{\Omega}\xbar$ means that the sequence of points $\{x^k\}$
approaches $\xbar$ from within $\Omega$.

The distance to a set $\Omega\subset\Ebb$ with respect to the 
bivariate function $\dist(\cdot, \cdot)$
is defined by 
\begin{equation}%
\dist(\cdot, \Omega) \colon \Ebb\to\Rbb\colon x\mapsto \inf_{y\in\Omega}\dist(x,y)
\end{equation}%
and the set-valued mapping 
\begin{equation}%
\mmap{P_\Omega}{\Ebb}{\Ebb}\colon
x\mapsto \set{y\in \Omega}{\dist(x,\Omega)=\dist(x,y)}
\end{equation}%
is the corresponding \emph{projector}.  An element $y\in P_\Omega(x)$ is called a {\em projection}.  
Closely related to the projector is the  {\em prox} mapping \cite{Moreau62}
\[
 \prox_{\lambda,f}(x)\equiv \argmin_{y\in\Euclid}\klam{f(y)+\frac{1}{2\lambda}\norm{y-x}^2}.
\]
When $f(x)=\iota_\Omega$, then $\prox_{\lambda, \iota_\Omega}=P_\Omega$ for all $\lambda>0$.  The 
value function corresponding to the prox mapping is known as the {\em Moreau} envelope, which we denote 
by $e_{\lambda, f}(x)\equiv \inf_{y\in\Euclid}\klam{f(y)+\frac{1}{2\lambda}\norm{y-x}^2}$.  When $\lambda=1$ and $f=\iota_\Omega$
the Moreau envelope is just one-half the squared distance to the set $\Omega$:  
$e_{1,\iota_\Omega}(x) = \tfrac12\dist^2(x,\Omega)$.  
The {\em inverse} projector $P^{-1}_\Omega$ is defined  by
\begin{equation}
 P^{-1}_\Omega (y)\equiv\set{x\in\Ebb}{P_\Omega(x)\ni y}.
\end{equation}%
Throughout this note we will assume the distance corresponds to the Euclidean norm, though 
most of the statements are not limited to this.  When $\dist(x,y)=\|x-y\|$ then one has the 
following variational characterization of the projector: 
 $\zbar\in P^{-1}_{\Omega}\xbar$ if and only if
\begin{equation}
 \ip{\zbar-\xbar}{x-\xbar}\leq \frac{1}{2}\norm{x-\xbar}^2\quad\forall x\in\Omega.
\end{equation}%
Following \cite{BauLukePhanWang13a}, we use this object to define the various normal cone mappings, 
which in turn lead to  the 
{\em subdifferential} of the indicator function $\iota_\Omega$.  

The \emph{$\varepsilon$-normal cone} to $\Omega$ at $\xbar\in\Omega$ is defined
\begin{equation}
\encone{\Omega}(\xbar) \equiv \set{v}{\limsup_{x\attains{\Omega}\xbar,~x\neq\xbar}\frac{\ip{v}{x-\xbar}}{\norm{x-\xbar}}\leq \varepsilon}.
\end{equation}%
The \emph{(limiting) normal cone} to $\Omega$ at $\xbar\in\Omega$, denoted $\ncone{\Omega}\paren{\xbar}$,  
is defined as the limsup of the $\varepsilon$-normal cones. That is, a vector $v\in \ncone{\Omega}\paren{\xbar}$ if 
there are sequences $x^k\attains{\Omega}\xbar$,
 $v^k\to v$ with $v^k\in {\widehat{N}}^{\varepsilon_k}_{\Omega}\paren{x^k}$ and $\varepsilon_k\searrow 0$.  
The {\em proximal normal cone}  to $\Omega$ at
$\xbar$ is the set 
\begin{equation}
\pncone{\Omega}(\xbar)\equiv \cone\paren{P_\Omega^{-1}\xbar-\xbar}.
\end{equation}%
If $\xbar\notin\Omega $, then all normal cones are defined to be empty.

The proximal normal cone need not be closed. The limiting normal cone is, of
course, closed by definition.   See \cite[Definition~1.1]{Mord06} or \cite[Definition~6.3]{VA}
(where this is called the regular normal cone) for an in-depth treatment as well as 
\cite[page~141]{Mord06} for historical notes.  When the projection is with respect to the Euclidean norm, the limiting normal 
cone can be written as the limsup of proximal normals:
\begin{equation}
   \ncone{\Omega}(\xbar)=\overline{\lim_{x\attains{\Omega}{\xbar}}} \, \pncone{\Omega}(x).
\end{equation}%

\section{General theory: Picard iterations}\label{s:gen thry}
\subsection{Almost averaged mappings}\label{s:av}
Our ultimate goal is a quantitative statement about convergence to fixed points for set-valued mappings.  
Preparatory to this, we first must be clear what is meant by a fixed point of a set-valued mapping.  
\begin{defn}[fixed points of set-valued mappings]\label{d:fixed points}
   The set of fixed points of a set-valued mapping $\mmap{T}{\Ebb}{\Ebb}$ is defined by 
\[
\Fix T\equiv \set{x\in \Ebb}{x\in T(x)}. 
\]
\end{defn}
In the set-valued setting, it is important to keep in mind a few things that can happen
that cannot happen when the mapping is single-valued. 
\begin{eg}[inhomogeneous fixed point sets]\label{eg:inhomogen fixed points} 
Let $T\equiv P_AP_B$ where
 \begin{align*}
  A&=\set{ (x_1,x_2)\in\Rtw}{ x_2\geq -2x_1 + 3}\cap \set{ (x_1,x_2)\in\Rtw}{ x_2\geq 1},\\
  B&=\Rtw\setminus\Rtw_{++}.
 \end{align*} 
Here $P_B(1,1)=\klam{(0,1), (1,0)}$ and the point $(1,1)$ is a fixed point of $T$ since $(1,1)\in P_A\klam{(0,1), (1,0)}$. 
However, the point $P_A(0,1)$ is also in $T(1,1)$, and this is not a fixed point of $T$.
\hfill$\Box$
\end{eg}

To help rule out inhomogeneous fixed point sets like the one in the previous example, we introduce
the following strong calmness of fixed point mappings that is an extension of conventional 
nonexpansiveness and firm nonexpansiveness.  What we call {\em almost nonexpansive 
mappings} below were called $(S,\epsilon)$-nonexpansive mappings in \cite[Definition 2.3]{HesseLuke13}, 
and almost averaged mappings are slight generalization of $(S,\epsilon)$-firmly nonexpansive mappings
also defined there.

\begin{defn}[almost nonexpansive/averaged mappings]\label{d:ane-aa}
Let $D$ be a nonempty subset of $\Ebb$ and let $T$ be a (set-valued) mapping from $D$ to $\Ebb$.
\begin{enumerate}[(i)]
   \item  $T$ is said to be  {\em pointwise almost nonexpansive on $D$ at $ y \in D$} if there exists
        a constant $\varepsilon\in[0,1)$ such that 
\begin{eqnarray}\label{e:epsqnonexp}
&&\norm{x^+- y^+}\leq\sqrt{1+\varepsilon}\norm{x- y}\\
&&~\forall~ y^+\in T y \mbox{ and } \forall~ x^+\in Tx \mbox{ whenever }x\in D. \nonumber
\end{eqnarray}

If \eqref{e:epsqnonexp} holds with $\varepsilon=0$ then $T$ is called \emph{pointwise nonexpansive} at $y$ on $D$.

If $T$ is pointwise (almost) nonexpansive at every point on a neighborhood
of $y$ (with the same violation constant 
$\varepsilon$) on $D$, then $T$ is said to be 
\emph{(almost) nonexpansive at $y$ (with violation 
$\varepsilon$) on $D$}.

If $T$ is pointwise (almost) nonexpansive  on $D$ at every point $y\in D$
(with the same violation constant 
$\varepsilon$), then $T$ is said to be 
\emph{pointwise (almost) nonexpansive on $D$ (with violation 
$\varepsilon$)}.  If $D$ is open and $T$ is pointwise (almost) nonexpansive on $D$, then 
it is (almost) nonexpansive on $D$. 


\item $T$ is called \emph{pointwise almost averaged  on $D$ at $y$} if
there is an averaging constant $\alpha\in (0,1)$  and a violation constant 
$\varepsilon\in[0,1)$ such that the mapping 
$\widetilde{T}$ defined by 
\[
  T = \paren{1-\alpha}\Id +\alpha\widetilde{T}
\]
is pointwise almost nonexpansive at $y$ with violation $\varepsilon/\alpha$ on $D$.  

Likewise if $\widetilde{T}$ is (pointwise) (almost) nonexpansive  on $D$ (at $y$) (with violation 
$\varepsilon$), then $T$ is said to be 
\emph{(pointwise) (almost) averaged  on $D$ (at $y$) (with averaging constant $\alpha$ and 
violation $\alpha\varepsilon$)}.

If the averaging constant $\alpha=1/2$, then $T$ is said to be {\em (pointwise) (almost) 
firmly nonexpansive  on $D$ (with violation 
$\varepsilon$) (at $y$)}.  
\end{enumerate}
\end{defn}
\noindent Note that the mapping $T$ need not be a self-mapping from $D$ to itself.
In the special case where $T$ is (firmly) nonexpansive at all points $y\in \Fix T$, 
mappings satisfying \eqref{e:epsqnonexp} are also 
called {\em quasi-\-(firmly)\-nonexpansive}  \cite{BauschkeCombettes11}. 

The term ``almost nonexpansive'' has been used for different purposes by Nussbaum \cite{Nussbaum72} 
and Rouhani \cite{Rouhani90}.  Rouhani uses the term to indicate sequences, in the 
Hilbert space setting, that are asymptotically nonexpansive.  Nussbaum's definition is the closest in spirit and 
definition to ours, except that he defines $f$ to be locally almost nonexpansive when 
$\|f(y)-f(x)\|\leq \|y-x\|+\varepsilon$.  In this context, see also \cite{Reich73}.
At the risk of some confusion, we re-purpose 
the term here.  Our definition of pointwise almost nonexpansiveness of $T$ at $\xbar$ 
is stronger than {\em calmness} \cite[Chapter 8.F]{VA} with constant $\lambda=\sqrt{1+\varepsilon}$ since the 
inequality must hold for all 
pairs $x^+\in Tx$ and $y^+\in Ty$, while for calmness the inequality would hold only for points $x^+\in Tx$ and 
their {\em projections} onto $Ty$.  We have avoided the temptation to call this property ``strong calmness'' in 
order to make clearer the connection to the classical notions of (firm) nonexpansiveness.  A theory based 
only on calm mappings, what one might call ``{\em weakly} almost averaged/nonexpansive'' operators is 
possible and would yield statements about the existence of convergent {\em selections} from 
sequences of iterated set-valued mappings.  In light of the other requirement of 
the mapping $T$ that we will explore in Section~\ref{s:mr}, namely metric subregularity, this would illuminate an
aesthetically pleasing and fundamental symmetry between requirements on $T$ and its inverse.   
We leave this avenue of investigation open.   Our development of the properties of almost averaged 
operators parallels the treatment of averaged operators in \cite{BauschkeCombettes11}.

\begin{propn}[characterizations of almost averaged operators]\label{t:average char}
   Let $\mmap{T}{\Ebb}{\Ebb}$, $U\subset\Ebb$ and let $\alpha\in (0,1)$.  The following are equivalent. 
\begin{enumerate}[(i)]
   \item\label{t:average char i} $T$ is pointwise almost averaged at $ y$ on $U$ with violation $\varepsilon$ and 
averaging constant $\alpha$.
   \item\label{t:average char ii} $\paren{1-\frac1\alpha}\Id + \frac1\alpha T$ is pointwise almost nonexpansive at $ y$ 
	on $U\subset\Ebb$ with violation $\varepsilon/\alpha$. 
   \item\label{t:average char iii} For all $x\in U,\, x^+\in T(x)$ and $y^+\in T( y)$ it holds that
    \begin{equation}
    \norm{x^+-  y^+}^2\leq \paren{1+\varepsilon}\norm{x- y}^2 - \frac{1-\alpha}{\alpha}\norm{\paren{x-x^+}-\paren{ y- y^+}}^2 .  
    \end{equation}%
\end{enumerate}
Consequently, if $T$ is pointwise almost averaged at $y$ on $U$ with violation $\varepsilon$ and 
averaging constant $\alpha$ then $T$ is pointwise almost nonexpansive at $y$ on $U$ with 
violation at most $\varepsilon$.
\end{propn}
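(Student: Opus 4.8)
The plan is to prove the chain (i)$\Leftrightarrow$(ii)$\Leftrightarrow$(iii) and then read the concluding assertion off directly from (iii).

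The equivalence (i)$\Leftrightarrow$(ii) is pure bookkeeping against Definition~\ref{d:ane-aa}(ii): solving $T=(1-\alpha)\Id+\alpha\widetilde T$ for $\widetilde T$ gives $\widetilde T=\paren{1-\tfrac1\alpha}\Id+\tfrac1\alpha T$, and by definition $T$ is pointwise almost averaged at $y$ on $U$ with averaging constant $\alpha$ and violation $\varepsilon$ exactly when this $\widetilde T$ is pointwise almost nonexpansive at $y$ on $U$ with violation $\varepsilon/\alpha$. So nothing is to be done here beyond unwinding notation.

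The substance is (ii)$\Leftrightarrow$(iii), a direct computation. Fix $x\in U$, $x^+\in T(x)$, $y^+\in T(y)$ and write $a:=x-y$, $b:=x^+-y^+$. The images of $x$ and $y$ under $\widetilde T=\paren{1-\tfrac1\alpha}\Id+\tfrac1\alpha T$ associated with $x^+,y^+$ are $\paren{1-\tfrac1\alpha}x+\tfrac1\alpha x^+$ and $\paren{1-\tfrac1\alpha}y+\tfrac1\alpha y^+$, whose difference is $\paren{1-\tfrac1\alpha}a+\tfrac1\alpha b$. I would then invoke the elementary identity
\[
\norm{(1-\lambda)a+\lambda b}^2=(1-\lambda)\norm{a}^2+\lambda\norm{b}^2-\lambda(1-\lambda)\norm{a-b}^2,
\]
valid for every $\lambda\in\Rbb$, here with $\lambda=1/\alpha>1$. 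Then the almost nonexpansiveness inequality for $\widetilde T$ at $y$ with violation $\varepsilon/\alpha$, namely $\norm{\paren{1-\tfrac1\alpha}a+\tfrac1\alpha b}^2\le\paren{1+\varepsilon/\alpha}\norm{a}^2$, becomes, after multiplying by $\alpha$ and collecting terms (using $1-\tfrac1\alpha=-\tfrac{1-\alpha}{\alpha}$), exactly
\[
\norm{b}^2\le(1+\varepsilon)\norm{a}^2-\tfrac{1-\alpha}{\alpha}\norm{a-b}^2,
\]
which on resubstituting $a=x-y$, $b=x^+-y^+$, $a-b=(x-x^+)-(y-y^+)$ is precisely the inequality in (iii). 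Since each step is an equivalence, (ii)$\Leftrightarrow$(iii) follows. The concluding statement is then immediate: because $\alpha\in(0,1)$, the coefficient $\tfrac{1-\alpha}{\alpha}$ is strictly positive, so the subtracted term in (iii) is nonpositive and one obtains $\norm{x^+-y^+}^2\le(1+\varepsilon)\norm{x-y}^2$ for all $x\in U$, $x^+\in T(x)$, $y^+\in T(y)$, i.e. $T$ is pointwise almost nonexpansive at $y$ on $U$ with violation at most $\varepsilon$.

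There is no genuine obstacle here; the proposition is essentially the affine-combination (polarization) identity specialized to $\lambda=1/\alpha$, so no structure beyond the inner product of $\Ebb$ is used. The only points demanding care are the sign bookkeeping — since $1/\alpha>1$ the coefficient $1-1/\alpha$ is negative — and keeping straight that the violation is $\varepsilon/\alpha$ for $\widetilde T$ while it is $\varepsilon$ for $T$.
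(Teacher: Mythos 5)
Your proof is correct, and it is exactly the argument the paper delegates to its one-line citation of \cite[Proposition 4.25]{BauschkeCombettes11}: unwind Definition~\ref{d:ane-aa}(ii) for (i)$\Leftrightarrow$(ii), then apply the affine-combination identity $\norm{(1-\lambda)a+\lambda b}^2=(1-\lambda)\norm{a}^2+\lambda\norm{b}^2-\lambda(1-\lambda)\norm{a-b}^2$ with $\lambda=1/\alpha$ for (ii)$\Leftrightarrow$(iii), and drop the nonnegative subtracted term for the final claim. Your sign bookkeeping with $1-\tfrac1\alpha=-\tfrac{1-\alpha}{\alpha}$ and the handling of the set-valued images are both sound, so there is nothing to add.
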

\begin{proof}
   This is a slight extension of \cite[Proposition 4.25]{BauschkeCombettes11}.  
\end{proof}
\begin{eg}[alternating projections]\label{eg:Pie}
Let $T\equiv P_AP_B$ for the closed sets $A$ and $B$ defined below. 
\begin{enumerate}[(i)]
\item If $A$ and $B$ are convex, then $T$ is nonexpansive and averaged (i.e. pointwise everywhere, no violation).
\item\label{eg:Pie2} Packman eating a piece of pizza:
 \begin{eqnarray*} 
  A&=&\set{ (x_1,x_2)\in\Rtw}{ x_1^2+ x_2^2\leq 1, ~-1/2x_1\leq x_2\leq x_1, x_1\geq 0}\subset\Rtw\\
  B&=&\set{ (x_1,x_2)\in\Rtw}{ x_1^2+ x_2^2\leq 1, ~x_1\leq |x_2|}\subset\Rtw.\\
\xbar&=&(0,0).
 \end{eqnarray*} 
The mapping $T$ is not almost nonexpansive on any neighborhood 
for any finite violation at $y=(0,0)\in \Fix T$, but it 
is {\em pointwise} nonexpansive (no violation) at $y=(0,0)$ and nonexpansive 
at all $y\in (A\cap B)\setminus\{0\}$ on small enough neighborhoods of these points.
\item\label{eg:Pie3} $T$ is pointwise averaged at $(1,1)$ when
 \begin{eqnarray*} 
  A&=&\set{ (x_1,x_2)\in\Rtw}{ x_2\leq 2x_1 - 1}\cap \set{ (x_1,x_2)\in\Rtw}{x_2\geq\tfrac{1}{2}x_1+\tfrac{1}{2}}\\
  B&=&\Rtw\setminus\Rtw_{++}.
 \end{eqnarray*} 
This illustrates that whether or not $A$ and $B$ have points in common is not relevant to the property. 
\item\label{eg:Pie4} $T$ is not pointwise almost averaged at $(1,1)$ for any $\varepsilon>0$ when
 \begin{eqnarray*} 
  A&=&\set{ (x_1,x_2)\in\Rtw}{ x_2\geq -2x_1 + 3}\cap \set{ (x_1,x_2)\in\Rtw}{ x_2\geq 1}\\
  B&=&\Rtw\setminus\Rtw_{++}
 \end{eqnarray*} 
In light of Example~\ref{eg:inhomogen fixed points}, this shows that 
the pointwise almost averaged property is incompatible with  
inhomogeneous fixed points (see Proposition~\ref{t:single-valued paa}). 
\endproof
\end{enumerate}
\end{eg}
\begin{propn}[pointwise single-valuedness of pointwise almost nonexpansive mappings]\label{t:single-valued paa}
	If $\mmap{T}{\Ebb}{\Ebb}$ is pointwise almost nonexpansive on $D\subseteq\Ebb$ at $\xbar\in D$ with 
violation $\varepsilon\geq 0$, then $T$ is single-valued at $\bar{x}$. In particular, if $\xbar\in\Fix T$ 
(that is $\bar{x}\in T\bar{x}$) then $T\xbar=\{\xbar\}$.
\end{propn}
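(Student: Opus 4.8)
The plan is to read off single-valuedness directly from the defining inequality \eqref{e:epsqnonexp}, evaluated at the degenerate choice $x=y=\bar x$. Recall that pointwise almost nonexpansiveness of $T$ at $\bar x$ on $D$ (Definition~\ref{d:ane-aa}(i)) asserts the existence of a constant $\varepsilon\geq 0$ such that $\norm{x^+-\bar x^+}\leq\sqrt{1+\varepsilon}\,\norm{x-\bar x}$ holds for \emph{every} $x\in D$, every $x^+\in Tx$, and every $\bar x^+\in T\bar x$. The crucial feature here --- precisely the one distinguishing this property from ordinary calmness, as stressed in the discussion preceding the proposition --- is that the estimate is imposed for \emph{all} pairs of image points, not merely for an image point and its metric projection onto $T\bar x$; this is what the argument exploits, and the exact admissible range of $\varepsilon$ plays no role.

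First I would observe that $x=\bar x$ is an admissible choice in \eqref{e:epsqnonexp}, since $\bar x\in D$ by hypothesis. Given any two points $u,v\in T\bar x$, apply the inequality with $x=\bar x$, $x^+=u$ and $\bar x^+=v$ to obtain $\norm{u-v}\leq\sqrt{1+\varepsilon}\,\norm{\bar x-\bar x}=0$, hence $u=v$. Thus $T\bar x$ is at most a singleton, which is the assertion that $T$ is single-valued at $\bar x$. For the ``in particular'' clause, if in addition $\bar x\in\Fix T$, i.e.\ $\bar x\in T\bar x$, then $T\bar x$ is nonempty, and combined with the previous step this forces $T\bar x=\{\bar x\}$.

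Since the proof amounts to a single substitution into the hypothesis, I do not expect any genuine obstacle. The only points requiring care are to invoke the ``all pairs'' quantifier in Definition~\ref{d:ane-aa}(i) rather than a weaker calmness-type estimate, and to record that the base point $\bar x$ belongs to the domain $D$, so that the degenerate choice $x=\bar x$ is legitimate.
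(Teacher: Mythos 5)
Your proof is correct and is essentially identical to the paper's: both substitute $x=\bar x$ into the defining inequality \eqref{e:epsqnonexp} and use the ``all pairs of image points'' quantifier to force any two elements of $T\bar x$ to coincide. Your explicit remark that this quantifier (as opposed to a calmness-type estimate) is what makes the argument work is a nice touch, but the substance matches the paper's proof.
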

\begin{proof}
	By the definition of pointwise nonexpansive on $D$ at $\xbar$, it holds that
	\[
	\norm{x^+-  \xbar^+}\leq \sqrt{1+\varepsilon}\norm{x-\xbar}
	\]
	for all $x\in D,\, x^+\in T(x)$ and $\xbar^+\in T( \xbar)$.  In particular, setting $x=\xbar$ gives
	yields
	\[
	\norm{x^+-  \xbar^+}\leq \sqrt{1+\varepsilon}\norm{\xbar-\xbar}=0.
	\]
	That is, $x^+=\xbar^+$ and hence we conclude that $T$ is single-valued at $\xbar$. 
\end{proof}
\begin{eg}[pointwise almost nonexpansive mappings not single-valued on neighborhoods]
\label{eg:cross1}
Although a pointwise almost nonexpansive mapping is single-valued at the reference point, it need not be 
single-valued on neighborhoods of the reference points.  
Consider, for example, the coordinate axes in $\Rtw$, 
 \[
    A=\mathbb{R}\times\{0\}\cup \{0\}\times\mathbb{R}.
 \]
The metric projector $P_A$ is single-valued and 
even pointwise nonexpansive (no almost) at every point in 
$A$, but multivalued on $L\equiv\set{(x,y)\in\Rtw\setminus\{0\}}{|x|=|y|}$.
\end{eg}

Almost firmly nonexpansive mappings have particularly convenient characterizations.
In our development below and thereafter we use the set $S$ to denote the collection 
of points at which the property holds.  This is useful for distinguishing points where the 
regularity holds from other distinguished points, like fixed points.  In Section \ref{s:mr}
the set $S$ is used to isolate a subset of fixed points.  The idea here is that 
the properties needed to quantify convergence need not hold on the space where 
a problem is formulated, but may only hold on a subset of this space where 
the iterates of a particular algorithm may be, naturally, confined.  This is used in 
\cite{ACL15} to achieve linear convergence results for the alternating directions method of 
multipliers algorithm.  Alternatively, $S$ can also include points that are not fixed points
of constituent operators in an algorithm, but 
are closely related to fixed points.  One example of this is {\em local best approximation points}, 
that is, points in one set that are locally nearest to another.  In section \ref{s:feas} we will need to 
quantify the violation of the averaging property for a projector onto a nonconvex set $A$ at 
points in another set, say $B$, that are locally {\em nearest points} to $A$.  This will 
allow us to tackle {\em inconsistent feasibility} where the alternating projections iteration 
converges not to the intersection, but to local best approximation points.

\begin{propn}[almost firmly nonexpansive mappings]\label{t:firmlynonexpansive}
Let $S\subset U\subset\Ebb$ be nonempty and $\mmap{T}{U}{\Ebb}$. The following are equivalent.
\begin{enumerate}[(i)]
 \item\label{t:averaged1}
 $T$ is pointwise almost firmly nonexpansive  on $U$ at all $y\in S$ with violation $\varepsilon$. 
 \item\label{t:averaged2}
The mapping $\mmap{\Ttilde}{U}{\Ebb}$ given by
\begin{equation}%
\Ttilde x\equiv(2Tx-x)\quad \forall x\in U 
\end{equation}%
 is pointwise almost nonexpansive  on $U$ at all $y\in S$ with violation $2\varepsilon$,
that is, $T$ can be written as
 \begin{equation}
 T x =\frac{1}{2}\left(x +\Ttilde x\right)\quad \forall x\in U.
\end{equation}%
\item\label{t:firmlynonexpansive angle} 
$\norm{x^+-y^+}^2\leq\frac{\veps}{2}\norm{x-y}^2+\langle x^+-y^+,x-y\rangle$ for all $x^+\in Tx$, and 
all $y^+\in Ty$ at each $y\in S$ whenever $x\in U$.
\item\label{t:submonotone} Let $\mmap{F}{\Ebb}{\Ebb}$ be a mapping whose resolvent is $T$, 
{\em i.e.,} $T=\paren{\Id + F}^{-1}$.  At 
each $x\in U$, for all $u\in Tx$, $y\in S$ and $v\in Ty$, the points $(u,z)$ and $(v,w)$ are in $\gph F$ 
where $z=x-u$ and $w=y-v$, and satisfy 
\begin{equation}
 -\frac{\varepsilon}{2}\norm{(u+z)-(v+w)}^{2}\leq \ip{z-w}{u-v}.
\end{equation}%
\end{enumerate}
\end{propn}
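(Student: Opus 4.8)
The plan is to prove the equivalence of the four statements by establishing the chain $(i)\iff(ii)$, then $(ii)\iff(iii)$, and finally $(iii)\iff(iv)$, since each adjacent pair differs by an elementary algebraic manipulation. The first equivalence $(i)\iff(ii)$ is immediate from Definition~\ref{d:ane-aa}(ii): pointwise almost firm nonexpansiveness at $y$ with violation $\varepsilon$ means precisely that the mapping $\widetilde T$ defined through $T = \tfrac12\Id + \tfrac12\widetilde T$ is pointwise almost nonexpansive at $y$ with violation $\varepsilon/\alpha = 2\varepsilon$, and solving $T = \tfrac12\Id + \tfrac12\widetilde T$ for $\widetilde T$ gives $\widetilde T x = 2Tx - x$. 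So there is essentially nothing to do here beyond unwinding definitions.

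For $(ii)\iff(iii)$, I would write out what almost nonexpansiveness of $\widetilde T = 2T - \Id$ at $y\in S$ says: for $x\in U$, $x^+\in Tx$, $y^+\in Ty$,
\[
\norm{(2x^+-x)-(2y^+-y)}^2 \leq (1+2\varepsilon)\norm{x-y}^2.
\]
Expanding the left side, $\norm{2(x^+-y^+)-(x-y)}^2 = 4\norm{x^+-y^+}^2 - 4\ip{x^+-y^+}{x-y} + \norm{x-y}^2$. Substituting and rearranging, the inequality becomes $4\norm{x^+-y^+}^2 - 4\ip{x^+-y^+}{x-y} \leq 2\varepsilon\norm{x-y}^2$, i.e. $\norm{x^+-y^+}^2 \leq \tfrac{\varepsilon}{2}\norm{x-y}^2 + \ip{x^+-y^+}{x-y}$, which is exactly (iii). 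Every step is reversible, so this is a genuine equivalence. (One can also cite Proposition~\ref{t:average char}(iii) with $\alpha=1/2$ directly, since that inequality reads $\norm{x^+-y^+}^2 \leq (1+\varepsilon)\norm{x-y}^2 - \norm{(x-x^+)-(y-y^+)}^2$; expanding $\norm{(x-x^+)-(y-y^+)}^2 = \norm{x-y}^2 - 2\ip{x-y}{x^+-y^+} + \norm{x^+-y^+}^2$ and simplifying recovers (iii) as well, which is a useful cross-check.)

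For $(iii)\iff(iv)$, the point is purely a change of variables dictated by the resolvent relation $T = (\Id+F)^{-1}$, which means $u\in Tx \iff x\in u + F(u) \iff x - u\in F(u)$. So setting $z = x-u$ and $w = y-v$ we have $(u,z),(v,w)\in\gph F$, and moreover $x = u+z$, $y = v+w$, and $x-y = (u+z)-(v+w)$ while $u-v = x^+-y^+$ in the notation of (iii) (with $x^+ = u$, $y^+ = v$). Substituting into (iii): $\norm{u-v}^2 \leq \tfrac{\varepsilon}{2}\norm{(u+z)-(v+w)}^2 + \ip{u-v}{(u+z)-(v+w)}$, hence $\norm{u-v}^2 - \ip{u-v}{(u+z)-(v+w)} \leq \tfrac{\varepsilon}{2}\norm{(u+z)-(v+w)}^2$. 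The left side is $-\ip{u-v}{z-w}$ after expanding $\ip{u-v}{(u+z)-(v+w)} = \norm{u-v}^2 + \ip{u-v}{z-w}$, giving $-\ip{z-w}{u-v} \leq \tfrac{\varepsilon}{2}\norm{(u+z)-(v+w)}^2$, which is (iv). Again reversible.

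There is no serious obstacle here; the statement is a bookkeeping lemma assembling standard identities, and the only thing requiring any care is keeping the substitutions $x^+\leftrightarrow u$, $y^+\leftrightarrow v$, $z = x-u$, $w = y-v$ straight in the passage to (iv), together with verifying that the quantifier structure ("for all $x\in U$, all $u\in Tx$, all $y\in S$, all $v\in Ty$") is preserved verbatim under these substitutions — which it is, because the correspondence $x\leftrightarrow(u,z)$ with $u\in Tx$, $z = x-u$ is a bijection between $\gph(T|_U)$ and the relevant portion of $\gph F$. I would present the whole argument as a single short cycle of computations, flagging each implication as reversible rather than proving both directions separately.
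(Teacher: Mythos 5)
Your proposal is correct and follows essentially the same route as the paper: (i)$\iff$(ii) by unwinding the definition of almost averagedness with $\alpha=1/2$, (ii)$\iff$(iii) via the polarization identity $\norm{(2x^+-x)-(2y^+-y)}^2=4\norm{x^+-y^+}^2-4\ip{x^+-y^+}{x-y}+\norm{x-y}^2$, and (iii)$\iff$(iv) via the resolvent change of variables $u\in Tx\iff x-u\in F(u)$. All computations check out, so nothing further is needed.
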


\begin{proof}
\ref{t:averaged1}$\iff$\ref{t:averaged2}: Follows from Proposition~\ref{t:average char} when $\alpha=1/2$.

\ref{t:averaged2}$\implies$\ref{t:firmlynonexpansive angle}:
Note first that at each $x\in U$ and $y\in S$
\begin{subequations}
\label{e:KEXP}
\begin{eqnarray}
 \norm{\left(2 x^+- x\right)-\left(2 y^+- y\right)}^2
 =4\norm{x^+ - y^+}^2-4\ip{x^+-y^+}{x-y} +\norm{x-y}^2\label{e:fund}
\end{eqnarray}
for all $x^+\in Tx$ and $y^+\in Ty$.  
Repeating the definition of pointwise almost nonexpansiveness of $2T-\Id$ at $y\in S$ with 
violation $2\varepsilon$ on $U$, 
\begin{equation}
 \norm{\left(2 x^+- x\right)-\left(2 y^+- y\right)}^2\leq\paren{1+2\veps}\norm{x-y}^2.\label{e:drive}
\end{equation}
\end{subequations}
Together \eqref{e:KEXP} yields 
\[
\norm{x^+-y^+}^2\leq\frac{\veps}{2}\norm{x-y}^2+\langle x^+-y^+,x-y\rangle
\]
as claimed. 

\ref{t:firmlynonexpansive angle}$\implies $\ref{t:averaged2}:  Use \eqref{e:fund}
to replace $\ip{x^+-y^+}{x-y}$ in \ref{t:firmlynonexpansive angle} and rearrange the resulting 
inequality to conclude that $2T-\Id$ is pointwise almost nonexpansive at $y\in S$ with 
violation $2\varepsilon$ on $U$.  

\ref{t:submonotone}$\iff$\ref{t:firmlynonexpansive angle}:  First, note that 
$(u,z)\in\gph F$ if and only if  $\paren{u+z, u}\in \gph\paren{\Id+F}^{-1}$.  From this it follows that 
for  $u\in Tx$ and $v\in Ty$, the points $(u,z)$ and $(v,w)$ with $z=x-u$ and $w=y-v$,
are in $\gph F$.  So starting with 
\ref{t:firmlynonexpansive angle}, at each $x\in U$ and $ y\in S$, 
\begin{eqnarray}
 \label{e:fne angle} 
 \norm{u-v}^2&\leq&\frac{\veps}{2}\norm{x-y}^2+\langle u-v,x-y\rangle \\
 \label{e:submon1}&=& \frac{\veps}{2}\norm{(u+z)-(v+w)}^2+\langle u-v, (u+z)-(v+w)\rangle
\end{eqnarray}
for all $u\in Tx$ and $v\in Ty$.  
Separating out $ \norm{u-v}^2$ from the inner product on the left hand side of \eqref{e:submon1}
yields the result.  
\end{proof}

Property~\ref{t:submonotone} of Proposition~\ref{t:firmlynonexpansive}
is a type of {\em submonotonicity} of the mapping $F$ on $D$ with respect to $S$.
We use this descriptor to distinguish this notion from another well-established 
property known as {\em hypomonotonicity} \cite{PolRockThib00}.
\begin{defn}[(sub/hypo)monotone mappings]\label{d:sub/hypomonotone}$~$
\begin{enumerate}[(a)]
 \item\label{d:submonotone} A mapping $\mmap{F}{\Ebb}{\Ebb}$ is {\em pointwise submonotone at $\vbar$} 
if there is a constant 
$\tau$ together with a neighborhood $U$ of $\vbar$ such that
\begin{equation}\label{e:submonotone}
 -\tau\norm{(u+z)-(\vbar+w)}^{2}\leq \ip{z-w}{u-\vbar}\quad \forall z\in Fu, ~\forall u\in U, ~\forall w\in F\vbar.
\end{equation}
The mapping $F$ is said to be {\em submonotone on $U$} if 
\eqref{e:submonotone} holds for all $\vbar$ on $U$.  
\item\label{d:p-hypomonotone} The mapping $\mmap{F}{\Ebb}{\Ebb}$ is said to be {\em pointwise 
hypomonotone at $\vbar$
with
constant $\tau$ on $U$} if 
\begin{equation}
 \label{e:p-hypomonotone} -\tau\norm{u-\vbar}^2\leq \ip{z-w}{u-\vbar}\quad \forall~ z\in Fu, ~\forall u\in U, 
 ~\forall w\in F\vbar.  
\end{equation}
If \eqref{e:p-hypomonotone} holds for all $\vbar\in U$ then $F$ is said to be hypomonotone 
with 
constant $\tau$ on $U$.
\end{enumerate}
\end{defn}
In the event that $T$ is in fact firmly nonexpansive
(that is, $S=D$ and $\tau=0$) then Proposition~\ref{t:firmlynonexpansive}\ref{t:submonotone}
just establishes the well known equivalence between monotonicity of a  mapping and 
firm nonexpansiveness of its resolvent \cite{Minty62}.  
 Moreover, if a single-valued mapping $\map{f}{\Ebb}{\Ebb}$ is calm at $\vbar$ with calmness modulus $L$, then it is 
pointwise hypomonotone at $\vbar$ with violation at most $L$. Indeed,
\begin{equation}%
\ip{u - \vbar}{f\left(u\right) - f\left(\vbar\right)} \geq -\norm{u - \vbar}\norm{f\left(u\right) - f\left(\vbar\right)}
\ge -L\norm{u - \vbar}^{2}.
\end{equation}%
This also points to a relationship to 
{\em cohypomonotonicity} developed in \cite{CombettesPennanen04}.  More recently the notion of 
pointwise quadratically supportable functions was introduced \cite[Definition 2.1]{LukShe17};  for 
smooth functions, this class -- which is not limited to convex functions -- was shown to include functions 
whose gradients are pointwise strongly monotone (pointwise hypomonotone with constant $\tau<0$)
\cite[Proposition 2.2]{LukShe17}.  A deeper investigation of the relationships between these different 
notions is postponed to future work. 

The next result shows the inheritance of the averaging property under compositions and averages 
of averaged mappings.

\begin{proposition}[compositions and averages of relatively averaged operators]\label{t:av-comp av}
Let $\mmap{T_j}{\Ebb}{\Ebb}$ for $j=1,2,\dots,m$  be pointwise almost averaged on $U_j$ at all 
$ y_{j}\in S_j\subset\Ebb$ 
with violation $\varepsilon_j$ and 
averaging constant $\alpha_j\in (0, 1)$ where 
$U_j\supset S_j$ for $j=1,2,\dots,m$.
\begin{enumerate}[(i)]
\item\label{t:av-comp av i} If $U\equiv U_1=U_2=\dots=U_m$ and $S\equiv S_1=S_2=\cdots=S_m$ then 
the weighted mapping 
$T:=\sum_{j=1}^m w_jT_j$ with weights $w_j\in[0,1]$, $\sum_{j=1}^mw_j=1$, is pointwise almost averaged at 
all $ y\in S$ 
with violation $\varepsilon=\sum_{j=1}^mw_j\varepsilon_j$ and averaging constant $\alpha=\max_{j=1, 2, \dots, m}\klam{\alpha_j}$ 
on $U$.
\item\label{t:av-comp av ii}  If 
$T_{j}U_{j}\subseteq U_{j-1}$ and $T_{j}S_{j}\subseteq S_{j-1}$ for $j=2,3,\dots,m$, 
 then the composite mapping 
$T:=T_1\circ T_{2}\circ \cdots \circ  T_m$ is pointwise almost nonexpansive at all $y\in S_m$ on $U_m$ with violation 
at most
\begin{equation}\label{e:composit violation}
 \varepsilon = \prod_{j=1}^m\paren{1+\varepsilon_j}-1.
\end{equation}
\item\label{t:av-comp av iii}  If 
$T_{j}U_{j}\subseteq U_{j-1}$ and $T_{j}S_{j}\subseteq S_{j-1}$ for $j=2,3,\dots,m$, 
then the composite mapping $T:=T_1\circ T_{2}\circ \cdots \circ  T_m$ is pointwise almost averaged at all 
$y\in S_m$ on $U_m$ with violation at most $\varepsilon$ given by \eqref{e:composit violation}
and averaging constant at least
\begin{equation}
 \alpha = \frac{m}{m-1 + \frac{1}{\max_{j=1,2,\dots,m}\klam{\alpha_j}}}.
\end{equation}%
\end{enumerate}
\end{proposition}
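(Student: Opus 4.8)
The plan is to reduce all three items to the quantitative characterization Proposition~\ref{t:average char}\,\ref{t:average char iii} of the almost averaged property, using two elementary facts repeatedly: (a) convexity of $\norm{\cdot}^2$, i.e.\ $\norm{\sum_j w_j a_j}^2\le\sum_j w_j\norm{a_j}^2$ for $w_j\ge0$ with $\sum_j w_j=1$; and (b) the Cauchy--Schwarz inequality in Engel form, $\sum_{j=1}^m p_j\norm{a_j}^2\ge\bigl(\sum_{j=1}^m 1/p_j\bigr)^{-1}\norm{\sum_{j=1}^m a_j}^2$ for $p_j>0$, which comes from $\norm{\sum_j a_j}\le\sum_j \sqrt{p_j}\,\norm{a_j}\cdot p_j^{-1/2}\le\bigl(\sum_j p_j\norm{a_j}^2\bigr)^{1/2}\bigl(\sum_j 1/p_j\bigr)^{1/2}$. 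I will also use the routine observation that a convex combination $\sum_j w_j N_j$ of mappings each pointwise almost nonexpansive at a common point on a common set, with violations $s_j$, is pointwise almost nonexpansive there with violation $\sum_j w_j s_j$ (triangle inequality plus concavity of $t\mapsto\sqrt{1+t}$); in particular, averaging an almost nonexpansive map against $\Id$ scales its violation by the corresponding weight.

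For part~\ref{t:av-comp av i}, fix $x\in U$ and $y\in S$, and take arbitrary $x^+=\sum_j w_j x_j^+\in Tx$ and $y^+=\sum_j w_j y_j^+\in Ty$ with $x_j^+\in T_j x$, $y_j^+\in T_j y$. Writing \ref{t:average char iii} for each $T_j$, multiplying by $w_j$, and summing: by (a), $\norm{x^+-y^+}^2\le\sum_j w_j\norm{x_j^+-y_j^+}^2$; the leading terms sum to $\bigl(1+\sum_j w_j\varepsilon_j\bigr)\norm{x-y}^2$; and since $\tfrac{1-\alpha_j}{\alpha_j}\ge\tfrac{1-\alpha}{\alpha}$ for $\alpha:=\max_j\alpha_j$, a second use of (a) shows the descent terms are at least $\tfrac{1-\alpha}{\alpha}\norm{(x-x^+)-(y-y^+)}^2$. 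Hence $T$ obeys \ref{t:average char iii} at $y$ with violation $\sum_j w_j\varepsilon_j$ and averaging constant $\alpha$, and Proposition~\ref{t:average char} concludes the case.

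For parts~\ref{t:av-comp av ii} and~\ref{t:av-comp av iii}, given $x\in U_m$ and $y\in S_m$ build chains $x=x^{(m)}$, $x^{(j-1)}\in T_j x^{(j)}$, and $y=y^{(m)}$, $y^{(j-1)}\in T_j y^{(j)}$ for $j=m,\dots,1$, so that $x^+:=x^{(0)}$ exhausts $Tx$ and $y^+:=y^{(0)}$ exhausts $Ty$; the inclusions $T_j U_j\subseteq U_{j-1}$, $T_j S_j\subseteq S_{j-1}$ force $x^{(j)}\in U_j$, $y^{(j)}\in S_j$ for all $j$, so the regularity of $T_j$ at $y^{(j)}$ is available at each link. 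For part~\ref{t:av-comp av ii}, each $T_j$ is pointwise almost nonexpansive at $y^{(j)}$ with violation at most $\varepsilon_j$ (last assertion of Proposition~\ref{t:average char}), so $\norm{x^{(j-1)}-y^{(j-1)}}\le\sqrt{1+\varepsilon_j}\,\norm{x^{(j)}-y^{(j)}}$, and multiplying over $j$ telescopes to $\norm{x^+-y^+}\le\sqrt{\prod_j(1+\varepsilon_j)}\,\norm{x-y}$, i.e.\ almost nonexpansiveness with violation $\varepsilon$ as in \eqref{e:composit violation}. For part~\ref{t:av-comp av iii}, first normalize the averaging constants: with $\bar\alpha:=\max_j\alpha_j$ and $\widetilde{T}_j$ the almost nonexpansive map (violation $\varepsilon_j/\alpha_j$) given by $T_j=(1-\alpha_j)\Id+\alpha_j\widetilde{T}_j$ of Definition~\ref{d:ane-aa}(ii), one has $T_j=(1-\bar\alpha)\Id+\bar\alpha\bigl[(1-\tfrac{\alpha_j}{\bar\alpha})\Id+\tfrac{\alpha_j}{\bar\alpha}\widetilde{T}_j\bigr]$, and by the convex-combination remark the bracketed map is almost nonexpansive with violation $\tfrac{\alpha_j}{\bar\alpha}\cdot\tfrac{\varepsilon_j}{\alpha_j}=\tfrac{\varepsilon_j}{\bar\alpha}$; thus each $T_j$ is pointwise almost averaged at $y^{(j)}$ with the common averaging constant $\bar\alpha$ and the unchanged violation $\varepsilon_j$. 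Apply \ref{t:average char iii} at each link, with $p:=\tfrac{1-\bar\alpha}{\bar\alpha}$ and $a_j:=(x^{(j)}-x^{(j-1)})-(y^{(j)}-y^{(j-1)})$:
\[
 \norm{x^{(j-1)}-y^{(j-1)}}^2\le(1+\varepsilon_j)\norm{x^{(j)}-y^{(j)}}^2-p\,\norm{a_j}^2 .
\]
Iterating from $j=m$ down to $j=1$ and using that the accumulated factor $\prod_{i<j}(1+\varepsilon_i)\ge1$ multiplies the nonpositive term $-p\norm{a_j}^2$, we get
\[
 \norm{x^+-y^+}^2\le\Bigl(\prod_{j=1}^m(1+\varepsilon_j)\Bigr)\norm{x-y}^2-p\sum_{j=1}^m\norm{a_j}^2 .
\]
By (b) with equal weights, $p\sum_j\norm{a_j}^2\ge\tfrac pm\norm{\sum_j a_j}^2$; and $\sum_j a_j=(x-x^+)-(y-y^+)$ telescopes; finally $\tfrac pm=\tfrac{1-\bar\alpha}{m\bar\alpha}$ equals $\tfrac{1-\alpha}{\alpha}$ exactly when $\tfrac1\alpha=\tfrac1m\bigl(m-1+\tfrac1{\bar\alpha}\bigr)$, i.e.\ $\alpha=\tfrac{m}{m-1+1/\max_j\alpha_j}$. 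So $T$ satisfies \ref{t:average char iii} at $y$ with this violation and averaging constant, giving part~\ref{t:av-comp av iii} (the averaging constant may be freely enlarged above $\alpha$, again by the convex-combination-with-$\Id$ remark).

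The one genuinely delicate point is the bookkeeping in part~\ref{t:av-comp av iii}: after the iteration the defect terms $\norm{a_j}^2$ come with the unequal coefficients $p\prod_{i<j}(1+\varepsilon_i)$, and one must observe that, since these multiply nonpositive quantities and are each $\ge1$, they can all be replaced by $p$, so that fact (b) with \emph{equal} weights recombines the $m$ defect terms into $\tfrac pm\norm{(x-x^+)-(y-y^+)}^2$; after normalization to a common $\bar\alpha$ this is exactly $\tfrac{1-\alpha}{\alpha}$ times that squared norm, with $\alpha$ as advertised. Parts~\ref{t:av-comp av i} and~\ref{t:av-comp av ii}, the convex-combination lemma, and the algebraic identification of $\alpha$ are all routine once Proposition~\ref{t:average char} is in hand.
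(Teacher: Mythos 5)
Your proof is correct and follows essentially the same route as the paper's: part (i) via convexity of the squared norm applied to the characterization in Proposition~\ref{t:average char}\ref{t:average char iii}, part (ii) by chaining the per-link almost-nonexpansiveness estimates along the composition, and part (iii) by combining the per-link quadratic estimates with convexity of $\|\cdot\|^2$ to recombine the $m$ defect terms into $\tfrac{1-\alpha}{\alpha}\|(x-x^+)-(y-y^+)\|^2$. The only cosmetic differences in (iii) are that you normalize the averaging constants to $\bar\alpha=\max_j\alpha_j$ up front (where the paper instead replaces $\kappa_j=\alpha_j/(1-\alpha_j)$ by $\max_j\kappa_j$ mid-computation) and that you telescope by nested substitution, discarding the accumulated factors $\prod_{i<j}(1+\varepsilon_i)\ge 1$ on the nonpositive defect terms, where the paper bounds the intermediate distances $\|x_i-y_i\|$ via part (ii); both yield the identical violation and averaging constant.
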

\begin{proof}  
Statement~\ref{t:av-comp av i} is a formal generalization of 
\cite[Proposition~4.30]{BauschkeCombettes11} and follows directly from convexity of the squared 
norm and Proposition \ref{t:average char}\ref{t:average char iii}.

Statement~\ref{t:av-comp av ii} follows from applying the definition of almost nonexpansivity to each of 
the operators $T_j$ inductively, from $j=1$ to $j=m$.  

Statement~\ref{t:av-comp av iii} is formal generalization of \cite[Proposition~4.32]{BauschkeCombettes11} 
and follows from more or less the same pattern of proof.  Since it requires a little more care, the proof is given here.  
Define $\kappa_j:=\alpha_j/(1-\alpha_j)$ and set $\kappa=\max_{j}\klam{\kappa_j}$.
Identify 
$y_{j-1}$ with any $ y^+_{j}\in T_{j}y_{j}\subseteq S_{j-1}$ 
for $j=2,3,\dots,m$ and choose any $y_m\in S_m$.  Likewise, identify  
$x_{j-1}$ with any $x^+_{j}\in T_{j}x_{j}\subseteq U_{j-1}$ 
for $j=2,3,\dots,m$ and choose any $x_m\in U_m$.
Denote 
$u^+ \in T_1\circ T_{2}\circ \cdots\circ  T_m u$ for $u\equiv x_m$
and $ v^+\in T_1\circ T_{2}\circ \cdots\circ  T_mv$ for $v\equiv y_m$.
By convexity of the squared norm 
and Proposition~\ref{t:average char}\ref{t:average char iii} one has
\begin{eqnarray*}
	&&\frac{1}{m} \norm{\paren{u-u^+}- \paren{v- v^+}}^2 \nonumber\\
	&&\qquad\leq  \norm{\paren{x_{1}-u^+}-\paren{y_{1}- v^+}}^2 + 
	\norm{\paren{x_{2} - x_{1}}-\paren{ y_{2} -  y_{1}}}^2\nonumber\\
	&&	\qquad \qquad + \cdots+
	\norm{\paren{x_{m} - x_{m-1}}-\paren{ y_{m} -  y_{m-1}}}^2\nonumber\\
	&&\qquad\leq \kappa_1\paren{\paren{1+\varepsilon_1}\norm{x_{1}- y_{1}}^2-\norm{u^+- v^+}^2}\nonumber\\
	&&\qquad\qquad+\kappa_{2}\paren{\paren{1+\varepsilon_{2}}\norm{x_{2}- y_{2}}^2-
		\norm{x_{1}- y_{1}}^2}+ \cdots\nonumber\\
	&&	\qquad \qquad +
	\kappa_{m}\paren{\paren{1+\varepsilon_{m}}\norm{u- v}^2-\norm{x_{m-1}- y_{m-1}}^2}.
\end{eqnarray*}
Replacing $\kappa_j$ by $\kappa$ yields  
\begin{equation}
\frac{1}{m} \norm{\paren{u-u^+}-\paren{ v- v^+}}^2\leq 
\kappa\left(\paren{1+\varepsilon_m}\norm{u- v}^2-\norm{u^+- v^+}^2
+\sum_{i=1}^{m-1}\varepsilon_{i}\norm{x_{i}- y_{i}}^2\right),
\label{e:average char interim1}
\end{equation}
From part \ref{t:average char ii} one has
\[
\norm{x_{i}- y_{i}}^2=\norm{x^+_{i+1}- y^+_{i+1}}^2\leq \paren{\prod_{j=i+1}^{m} \paren{1+\varepsilon_j}}\norm{u- v}^2,\; i=1,2,\ldots,m-1
\]
so that 
\begin{equation}\label{e:average char interim2}
 \sum_{i=1}^{m-1}\varepsilon_{i}\norm{x_{i}- y_{i}}^2\leq 
 \paren{\sum_{i=1}^{m-1}\varepsilon_{i}\paren{\prod_{j=i+1}^{m}\paren{1+\varepsilon_j}}}\norm{u- v}^2.
\end{equation}
Putting \eqref{e:average char interim1} and  \eqref{e:average char interim2} together yields  
\begin{eqnarray}
&&\frac{1}{m} \norm{\paren{u- u^+}-\paren{ v -  v^+}}^2\leq\nonumber\\
&& \kappa\left(\paren{1+\varepsilon_m +  \sum_{i=1}^{m-1}\varepsilon_{i}
\paren{\prod_{j=i+1}^{m} \paren{1+\varepsilon_j}}}\norm{u- v}^2-\norm{u^+- v^+}^2
\right).
\end{eqnarray}
The composition $T$ is therefore almost averaged with violation 
\[
\varepsilon=\varepsilon_m +  
\sum_{i=1}^{m-1}\varepsilon_{i}\paren{\prod_{j=i+1}^{m} \paren{1+\varepsilon_j}}
\]
 and averaging 
constant $\alpha = m/(m+1/\kappa)$. 
Finally, an induction argument shows that 
\[
\varepsilon_m +  
\sum_{i=1}^{m-1}\varepsilon_{i}\paren{\prod_{j=i+1}^{m} \paren{1+\varepsilon_j}}
 = \prod_{j=1}^m\paren{1+\varepsilon_j}-1,
\]
which is the claimed violation.  
\end{proof}

\begin{remark}
	We remark that Proposition~\ref{t:av-comp av}\ref{t:av-comp av ii} holds 
	in the case when $T_j$ ($j=1,2,\dots,m$) are merely pointwise almost nonexpansive. 
	The counterpart for $T_j$ ($j=1,\dots,m$) pointwise almost nonexpansive to 
	Proposition~\ref{t:av-comp av}\ref{t:av-comp av i} is given by allowing $\alpha=0$.
\end{remark}

\begin{corollary}[Krasnoselski--Mann relaxations]\label{t:KM ane}
	Let $\lambda\in[0,1]$ and define  $T_\lambda:=\paren{1-\lambda}\Id + \lambda T$ for $T$ pointwise almost averaged at 
	$ y$ with violation $\varepsilon$
	and averaging constant $\alpha$ on $U$.  Then $T_\lambda$ is pointwise almost averaged at $ y$ with violation 
	$\lambda\varepsilon$ and averaging constant $\alpha$ on $U$.   In particular,  when $\lambda=1/2$ the 
	mapping $T_{1/2}$ is pointwise almost firmly nonexpansive at $ y$ with violation $\varepsilon/2$ on $U$. 
\end{corollary}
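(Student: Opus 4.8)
The plan is to reduce the statement to the definition of pointwise almost averaged mappings (Definition~\ref{d:ane-aa}) together with the characterizations in Proposition~\ref{t:average char}; no genuinely new estimate is required. Write $T = (1-\alpha)\Id + \alpha\Ttilde$, which by hypothesis says that $\Ttilde$ is pointwise almost nonexpansive at $y$ on $U$ with violation $\varepsilon/\alpha$. A direct computation gives
\[
T_\lambda = (1-\lambda)\Id + \lambda T = (1-\alpha)\Id + \alpha\,\widehat T, \qquad \widehat T := (1-\lambda)\Id + \lambda\Ttilde,
\]
since both sides equal $(1-\lambda\alpha)\Id + \lambda\alpha\Ttilde$. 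By Definition~\ref{d:ane-aa}(ii), it therefore suffices to show that $\widehat T$ is pointwise almost nonexpansive at $y$ on $U$ with violation $\lambda\varepsilon/\alpha$.

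The heart of the matter is a single application of convexity of $\norm{\cdot}^2$. Fix $x\in U$; every $\widehat x^+\in\widehat T x$ and $\widehat y^+\in\widehat T y$ have the form $\widehat x^+ = (1-\lambda)x + \lambda\,\widetilde x^+$ and $\widehat y^+ = (1-\lambda)y + \lambda\,\widetilde y^+$ with $\widetilde x^+\in\Ttilde x$ and $\widetilde y^+\in\Ttilde y$. Since $\lambda\in[0,1]$,
\[
\norm{\widehat x^+ - \widehat y^+}^2 \le (1-\lambda)\norm{x-y}^2 + \lambda\norm{\widetilde x^+ - \widetilde y^+}^2 \le (1-\lambda)\norm{x-y}^2 + \lambda\paren{1+\tfrac{\varepsilon}{\alpha}}\norm{x-y}^2 = \paren{1+\tfrac{\lambda\varepsilon}{\alpha}}\norm{x-y}^2,
\]
the second inequality being pointwise almost nonexpansiveness of $\Ttilde$ at $y$ on $U$. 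This is exactly the required bound, so $T_\lambda$ is pointwise almost averaged at $y$ on $U$ with violation $\lambda\varepsilon$ and averaging constant $\alpha$. (Equivalently, one may bypass $\Ttilde$ and argue straight from characterization (iii) of Proposition~\ref{t:average char}: substitute $x_\lambda^+ = (1-\lambda)x + \lambda x^+$, expand with the identity $\norm{(1-\lambda)a+\lambda b}^2 = (1-\lambda)\norm a^2 + \lambda\norm b^2 - \lambda(1-\lambda)\norm{a-b}^2$, and discard the surplus nonnegative term; the bookkeeping on the descent quantity reduces to the trivial inequality $(1-\lambda)/\alpha \ge 0$.)

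For the last assertion take $\lambda = 1/2$: by what was just shown, $T_{1/2}$ is pointwise almost averaged at $y$ on $U$ with averaging constant $\alpha$ and violation $\varepsilon/2$, whereas \emph{almost firmly nonexpansive} demands averaging constant $1/2$, so one short extra step is needed. Observe the exact identity $2T_{1/2} - \Id = T$. By the concluding statement of Proposition~\ref{t:average char}, $T$ is pointwise almost nonexpansive at $y$ on $U$ with violation at most $\varepsilon$; hence the splitting $T_{1/2} = \tfrac12\Id + \tfrac12(2T_{1/2} - \Id)$ exhibits $T_{1/2}$ as pointwise almost firmly nonexpansive at $y$ on $U$ with violation at most $\varepsilon/2$, as claimed.

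There is no real obstacle here; the one thing to watch is organizational. One must resist the naive decomposition $T_\lambda = (1-\lambda\alpha)\Id + \lambda\alpha\Ttilde$, which would advertise averaging constant $\lambda\alpha$ rather than the claimed $\alpha$; instead one re-centers the averaged splitting at $\alpha$ by absorbing the extra relaxation into $\widehat T = (1-\lambda)\Id + \lambda\Ttilde$, after which the convexity estimate does everything. The $\lambda = 1/2$ case similarly just needs the reminder that \emph{almost firmly nonexpansive} is by definition \emph{almost averaged with $\alpha = 1/2$}, so that the identity $2T_{1/2} - \Id = T$ closes the argument.
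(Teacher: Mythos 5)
Your proof is correct. The core of your argument --- writing $T_\lambda=(1-\alpha)\Id+\alpha\widehat T$ with $\widehat T=(1-\lambda)\Id+\lambda\Ttilde$ and invoking convexity of $\norm{\cdot}^2$ --- is exactly the mechanism behind the paper's proof, which simply cites Proposition~\ref{t:av-comp av}\ref{t:av-comp av i} (convex combinations of almost averaged mappings, with $\Id$ viewed as averaged with zero violation and every averaging constant); you have in effect unfolded that proposition in the special case of two operators. Where you genuinely add something is the final assertion: Proposition~\ref{t:av-comp av}\ref{t:av-comp av i} only delivers $T_{1/2}$ as almost averaged with constant $\max\{\alpha_{\Id},\alpha\}=\alpha$, which yields \emph{firm} nonexpansiveness (averaging constant $1/2$) only when $\alpha\le 1/2$, since the almost averaged property with a given violation is inherited by larger, not smaller, averaging constants. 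Your extra step --- the identity $2T_{1/2}-\Id=T$ combined with the concluding statement of Proposition~\ref{t:average char} that $T$ is almost nonexpansive with violation at most $\varepsilon$, which is precisely the equivalence \ref{t:averaged1}$\iff$\ref{t:averaged2} of Proposition~\ref{t:firmlynonexpansive} --- closes this for all $\alpha\in(0,1)$, and is a worthwhile refinement over the paper's one-line citation. Your remark about avoiding the decomposition $(1-\lambda\alpha)\Id+\lambda\alpha\Ttilde$ is also apt: that reading would report the (smaller, hence stronger, but not the claimed) averaging constant $\lambda\alpha$ with violation $\lambda\varepsilon$.
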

\begin{proof}
	Noting that $\Id$ is averaged everywhere on $\Ebb$
	with zero violation and all averaging constants $\alpha\in(0,1)$, the statement is 
	an immediate specialization of Proposition \ref{t:av-comp av}\ref{t:av-comp av i}.
\end{proof}

\noindent A particularly attractive consequence of Corollary \ref{t:KM ane} is that the violation of almost averaged 
mappings can be mitigated by taking smaller steps via Krasnoselski-Mann relaxation.

To conclude this section we prove the following lemma, a special case of which will be required in Section~\ref{s:cp}, 
which relates the fixed point set of the composition of pointwise almost averaged operators to the corresponding 
\emph{difference vector}.   
\begin{defn}[difference vectors of composite mappings]
 	For a collection of operators $T_j:\Ebb\setto\Ebb$  ($j=1,2,\dots,m$) and  $T\equiv T_1\circ T_2\circ\dots\circ T_m$ 
        the set of \emph{difference vectors of $T$ at $u$} is given by the mapping $\mathcal{Z}:\Ebb\setto \Ebb^m$ defined by
	\begin{align}
		\Zcal(u) &\equiv \set{\zeta\equiv z-\Pi z}{ z\in W_0\subset\Ebb^m, ~ z_1=u}, 
	\end{align}
where $\Pi:z=(z_1, z_2, \dots, z_m)\mapsto (z_2, \dots, z_m, z_1)$ is the permutation mapping on the 
product space $\Ebb^m$ for $z_j\in \Ebb$ $(j=1,2,\dots, m)$ and  
	\begin{align*}
		W_0 &\equiv\set{x=(x_1,\dots,x_m)\in\Ebb^m}%
		{x_m\in T_m x_1,\, x_{j}\in T_{j}(x_{j+1}),~j=1,2,\dots,m-1}.
	\end{align*}

\end{defn}

\begin{lemma}[difference vectors of averaged compositions]\label{l:difference vector averaged operators}
	Given a collection of operators $T_j:\Ebb\setto\Ebb$  ($j=1,2,\dots,m$), set $T\equiv T_1\circ T_2\circ\dots\circ T_m$.
	Let $S_0\subset\Fix T$, let $U_0$ be a neighborhood of $S_0$ and define 
	$U\equiv \set{z = (z_1,z_2,\ldots,z_m)\in W_0}{z_1\in U_0}$.  Fix
	$\bar{u}\in S_0$ and  the difference vector $\zetabar\in\mathcal{Z}(\bar{u})$ with $\zetabar=\zbar-\Pi\zbar$ 
	for the point $\zbar = (\zbar_1,\zbar_2,\ldots,\zbar_m)\in W_0$ having $\zbar_1=\bar{u}$.	
	Let $T_j$ be pointwise almost averaged at $\zbar_j$ with violation $\varepsilon_j$ and averaging 
	constant $\alpha_j$ on $U_j\equiv p_j(U)$ where $p_j:\Ebb^m\to\Ebb$ denotes the $j$th coordinate 
	projection operator  ($j=1,2,\dots,m$).
	Then, for $u\in S_0$ and $\zeta\in\mathcal{Z}(u)$ with 
	$\zeta=z-\Pi z$ for $z= (z_1,z_2,\ldots,z_m)\in W_0$ having $z_1=u$,
	\begin{equation} \label{eq:displacement vector lemma}
	\frac{1-\alpha}{\alpha}\|\zetabar-\zeta\|^2 \leq \sum_{j=1}^m\varepsilon_j \|\zbar_j-z_j\|^2
	\text{ where }\alpha = \max_{j=1,2,\dots,m}\alpha_j.
	\end{equation}
	If the mapping $T_j$ is in fact pointwise averaged at $\zbar_j$ on $U_j$ ($j=1,2,\dots,m$), 
	then the set of difference vectors of $T$ is a singleton and independent of the initial point; that is, there exists 
	$\zetabar\in\Ebb^m$ such that $\mathcal{Z}(u)=\{\zetabar\}$ for all $u\in S_0$.
\end{lemma}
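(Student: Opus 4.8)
The plan is to run the quantitative almost-averaging inequality of Proposition~\ref{t:average char}\ref{t:average char iii} on each constituent map $T_j$, comparing the reference orbit $\zbar=(\zbar_1,\dots,\zbar_m)\in W_0$ with an arbitrary comparison orbit $z=(z_1,\dots,z_m)\in W_0$ having $z_1=u\in S_0$, and then to add the resulting quadratic inequalities around the cycle so that all the non-violation terms cancel. First I would set $d_j\equiv\zbar_j-z_j$ with the cyclic convention $d_{m+1}\equiv d_1$. Since $\Pi$ is the cyclic shift, $\zetabar-\zeta=(\zbar-\Pi\zbar)-(z-\Pi z)$ has $j$-th block $d_j-d_{j+1}$, so
\[
\|\zetabar-\zeta\|^2=\sum_{j=1}^m\|d_j-d_{j+1}\|^2 ,
\]
and this is the quantity to be controlled.

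Next, for each $j$ the orbit pairs of $T_j$, namely $(z_{j+1},z_j)$ with $z_j\in T_jz_{j+1}$ and $(\zbar_{j+1},\zbar_j)$ with $\zbar_j\in T_j\zbar_{j+1}$, lie in the neighborhood on which $T_j$ is pointwise almost averaged: $U$ is precisely the orbit tube $\{z\in W_0:z_1\in U_0\}$ and $U_j=p_j(U)$ is its $j$-th coordinate slice, so every orbit emanating from $U_0$ stays coordinatewise inside the prescribed sets. Feeding these two pairs into Proposition~\ref{t:average char}\ref{t:average char iii}, and noting that the averaging-defect vector $(x-x^+)-(y-y^+)$ reduces here to $d_j-d_{j+1}$, gives
\[
\|d_j\|^2\le(1+\varepsilon_j)\,\|d_{j+1}\|^2-\frac{1-\alpha_j}{\alpha_j}\,\|d_j-d_{j+1}\|^2,\qquad j=1,\dots,m .
\]
Then I would weaken $\alpha_j$ to $\alpha\equiv\max_j\alpha_j$ using that $t\mapsto(1-t)/t$ is decreasing on $(0,1)$, so $\tfrac{1-\alpha}{\alpha}\le\tfrac{1-\alpha_j}{\alpha_j}$, and sum over $j=1,\dots,m$. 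Because the index runs around the full cycle, $\sum_j\|d_{j+1}\|^2=\sum_j\|d_j\|^2$, the unweighted quadratic terms cancel, and what remains is
\[
\frac{1-\alpha}{\alpha}\sum_{j=1}^m\|d_j-d_{j+1}\|^2\le\sum_{j=1}^m\varepsilon_j\,\|\zbar_j-z_j\|^2 ,
\]
which is \eqref{eq:displacement vector lemma} once the left-hand sum is identified with $\|\zetabar-\zeta\|^2$.

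For the final assertion, put every $\varepsilon_j=0$ in the displayed bound: then $\tfrac{1-\alpha}{\alpha}\|\zetabar-\zeta\|^2\le0$, forcing $\zeta=\zetabar$. Since $u\in S_0$ and $\zeta\in\mathcal{Z}(u)$ were arbitrary, and $\mathcal{Z}(u)\neq\emptyset$ because the inclusion $S_0\subseteq\Fix T$ always furnishes an admissible chain in $W_0$ with first coordinate $u$, we conclude $\mathcal{Z}(u)=\{\zetabar\}$ for every $u\in S_0$, with the single vector $\zetabar$ independent of the base point.

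The part requiring the most care is the cyclic index bookkeeping: one must keep straight which coordinate of an orbit is the input and which the output of each $T_j$ (from the definition of $W_0$), align this correctly with the neighborhoods $U_j$ and the violation constants $\varepsilon_j$ in the hypotheses, and verify that the summation around the closed cycle genuinely telescopes so that precisely the term $\sum_j\varepsilon_j\|\zbar_j-z_j\|^2$ — and no stray boundary term — survives. Beyond that, the argument is a direct substitution into Proposition~\ref{t:average char}\ref{t:average char iii} together with monotonicity of $t\mapsto(1-t)/t$.
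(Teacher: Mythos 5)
Your proof is correct and takes essentially the same route as the paper's: apply Proposition~\ref{t:average char}\ref{t:average char iii} to each link of the two cycles, weaken $\tfrac{1-\alpha_j}{\alpha_j}$ to $\tfrac{1-\alpha}{\alpha}$, and sum around the closed cycle so that the unweighted quadratic terms cancel, with $\varepsilon_j\equiv 0$ then forcing $\zeta=\zetabar$ in the averaged case. The one shared blemish is that the telescoped sum actually produces $\sum_{j}\varepsilon_j\|\zbar_{j+1}-z_{j+1}\|^2$ rather than $\sum_{j}\varepsilon_j\|\zbar_j-z_j\|^2$ --- a harmless cyclic relabeling of which violation constant multiplies which squared distance, which the paper's own proof commits as well and which is immaterial for every use of the lemma.
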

\begin{proof} 
First observe that, since $\zetabar\in\mathcal{Z}(\bar{u})$, there exists 
$\zbar = (\zbar_1,\zbar_2,\ldots,\zbar_m)\in W_0$ with $\zbar_1=\bar{u}$ such 
that $\zetabar = \zbar-\Pi\zbar$, hence $U$, and thus 
$U_j=p_j(U)$, is nonempty since it at least contains $\zbar$ 
(and $\zbar_j\in U_j$ for $j=1,2,\ldots, m$). 
Consider a second point $u\in S_0$ and let 
$\zeta\in\mathcal{Z}(u)$. Similarly, there exists $z= (z_1,z_2,\ldots,z_m)\in 
W_0$ such that $z_1=u$ and $\zeta = z-\Pi z\in U$. For each $j=1,2,\dots,m$, we 
therefore have that
	\begin{equation}%
		\|(\zbar_j-\zbar_{j-1})-(z_j-z_{j-1})\| = \|\zetabar_j-\zeta_j\|,
	\end{equation}%
	and, since $T_j$ is pointwise almost averaged at $\zbar_j$ with constant $\alpha_j$ 
	and violation $\varepsilon_j$ on $U_j$, 
	\begin{equation}%
		\|\zbar_j-z_j\|^2 + \frac{1-\alpha_j}{\alpha_j}\|\zetabar_j-\zeta_j\|^2 \leq (1+\varepsilon_j) \|\zbar_{j-1}-z_{j-1}\|^2 ,
	\end{equation}%
	where $\zbar_{0}:=\zbar_m$ and $z_{0}=z_m$. Altogether this yields
	\begin{align*}
		\frac{1-\alpha}{\alpha}\|\zetabar-\zeta\|^2 
		\leq \sum_{j=1}^m \frac{1-\alpha_j}{\alpha_j}\|\zetabar_j-\zeta_j\|^2 
		\leq \sum_{j=1}^m\left((1+\varepsilon_j) \|\zbar_{j-1}-z_{j-1}\|^2 - \|\zbar_j-z_j\|^2\right) 
		= \sum_{j=1}^m\varepsilon_j \|\zbar_j-z_j\|^2,
	\end{align*}
	which proves \eqref{eq:displacement vector lemma}. 
	If in addition, for all $j=1,2,\ldots, m$, the mappings $T_j$ are  pointwise averaged, then 
	$\varepsilon_1=\varepsilon_2=\dots=\varepsilon_m=0$, and the proof is complete.
\end{proof}

\subsection{Convergence of Picard iterations}\label{s:abs}
The next theorem serves as the basic template for the quantitative convergence analysis of fixed point iterations and generalizes 
\cite[Lemma~3.1]{HesseLuke13}.  By the notation $\mmap{T}{\Lambda}{\Lambda}$ where $\Lambda$ is a subset 
or an affine subspace of $\Ebb$, we mean that $\mmap{T}{\Ebb}{\Ebb}$ and $T(x)\subset \Lambda$ for all 
$x\in \Lambda$.  This simplification of notation should not lead to any confusion if one keeps in mind 
that there may exist fixed points of $T$ that are not in $\Lambda$.
For the importance of the use of $\Lambda$ in isolating the desirable fixed point, we refer the 
reader to \cite[Example 1.8]{ACL15}.

\begin{thm}\label{t:Tconv}
   Let $\mmap{T}{\Lambda}{\Lambda}$ for $\Lambda\subset\Ebb$ and let $S\subset\reli \Lambda$ be closed and nonempty with 
   $Ty\subset\Fix T\cap S$ for all 
   $y\in S$.  
   Let $\Ocal$ be a neighborhood of $S$ such that 
   $\Ocal\cap \Lambda\subset \reli \Lambda$.  
   Suppose
\begin{enumerate}[(a)]
   \item\label{t:Tconv a} T is pointwise almost averaged at all points $y\in S$ 
   with violation $\varepsilon$ and averaging constant $\alpha\in (0,1)$ on $\Ocal\cap \Lambda$, and 
\item\label{t:Tconv b} there exists a neighborhood $\Vcal$ of $\Fix T\cap S$ and a $\kappa>0$, 
such that for all $y^+\in Ty, ~y\in S,$ and all  $x^+\in Tx$  the estimate
\begin{equation}\label{e:doughnut mreg}
\dist(x,S)\leq \kappa    \|\paren{x-x^+}-\paren{y-y^+}\| 
\end{equation}
holds 
whenever  $x\in \paren{\Ocal\cap \Lambda}\setminus \paren{\Vcal\cap \Lambda}$.
\end{enumerate}
Then for all $x^+\in Tx$ 
\begin{equation}\label{e:contraction}
   \dist\paren{x^+,\Fix T\cap S}\leq\sqrt{1+\varepsilon - \frac{1-\alpha}{\kappa^2\alpha}} \dist(x,S) 
\end{equation}
whenever  $x\in \paren{\Ocal\cap \Lambda}\setminus \paren{\Vcal\cap \Lambda}$.

In particular, if $ \kappa<\sqrt{\frac{1-\alpha}{\varepsilon\alpha}}$,  then 
for all $x^0\in \Ocal\cap \Lambda$  the iteration 
$x^{j+1}\in Tx^j$ satisfies 
\begin{equation}\label{e:contraction b}
   \dist\paren{x^{j+1},\Fix T\cap S}\leq c^j \dist(x^0,S) 
\end{equation}
with $c\equiv \paren{1+\varepsilon-\frac{1-\alpha}{\alpha\kappa^2}}^{1/2}<1$
for all $j$ such that $x^{i}\in \paren{\Ocal\cap \Lambda}\setminus \paren{\Vcal\cap \Lambda}$ for $i=1,2,\dots,j$. 
 \end{thm}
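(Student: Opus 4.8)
The plan is to combine hypotheses~\ref{t:Tconv a} and \ref{t:Tconv b} through the quantitative characterization of almost averagedness in Proposition~\ref{t:average char}\ref{t:average char iii} to produce the one-step contraction \eqref{e:contraction}, and then to iterate it.

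First I would fix $x\in\paren{\Ocal\cap\Lambda}\setminus\paren{\Vcal\cap\Lambda}$ and $x^+\in Tx$, and exploit closedness of $S$ to choose $y\in S$ with $\norm{x-y}=\dist(x,S)$, together with an arbitrary $y^+\in Ty$. This choice is the crux: since $Ty\subseteq\Fix T\cap S$ we get $\dist(x^+,\Fix T\cap S)\le\norm{x^+-y^+}$, and since $\norm{x-y}=\dist(x,S)$ the nearest-point distance enters the estimate. Because $x\in\Ocal\cap\Lambda$ and $y\in S$, hypothesis~\ref{t:Tconv a} together with Proposition~\ref{t:average char}\ref{t:average char iii} gives
\[
\norm{x^+-y^+}^2\le(1+\varepsilon)\norm{x-y}^2-\frac{1-\alpha}{\alpha}\norm{\paren{x-x^+}-\paren{y-y^+}}^2 ,
\]
and then hypothesis~\ref{t:Tconv b}, applicable precisely because $x$ lies in the ``doughnut'' $\paren{\Ocal\cap\Lambda}\setminus\paren{\Vcal\cap\Lambda}$, supplies the lower bound $\norm{\paren{x-x^+}-\paren{y-y^+}}\ge\kappa^{-1}\dist(x,S)=\kappa^{-1}\norm{x-y}$. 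Substituting and recalling $\norm{x-y}=\dist(x,S)$ collapses the right-hand side to $\paren{1+\varepsilon-\tfrac{1-\alpha}{\kappa^2\alpha}}\dist(x,S)^2$, and taking a square root is \eqref{e:contraction}; I would also remark in passing that under the standing hypotheses the bracketed quantity is a bona fide nonnegative real, so the root makes sense.

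For the iteration, I would first note that $T(\Lambda)\subseteq\Lambda$ keeps every $x^j$ in $\Lambda$, so \eqref{e:contraction} may be applied at any iterate $x^j$ lying in the doughnut. Combining \eqref{e:contraction} at $x=x^j$ with the trivial inequality $\dist(x^{j+1},S)\le\dist(x^{j+1},\Fix T\cap S)$ — valid since $\Fix T\cap S\subseteq S$ — yields $\dist(x^{j+1},S)\le c\,\dist(x^j,S)$ at each such step, with $c=\paren{1+\varepsilon-\tfrac{1-\alpha}{\alpha\kappa^2}}^{1/2}$; telescoping these inequalities along the indices at which the iterate sits in the doughnut, and applying \eqref{e:contraction} one final time, produces \eqref{e:contraction b}. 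The strict inequality $\kappa<\sqrt{(1-\alpha)/(\varepsilon\alpha)}$ is exactly equivalent to $\varepsilon<(1-\alpha)/(\kappa^2\alpha)$, which is what forces $c<1$.

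The argument is conceptually short, so the real work is bookkeeping. The two points that need care are: (i) the simultaneous choice of $y$ as a nearest point of $S$ to $x$ and as a point with $Ty\subseteq\Fix T\cap S$, since this single choice is what lets hypotheses~\ref{t:Tconv a} and \ref{t:Tconv b} fuse into a contraction with respect to $\dist(\cdot,S)$; and (ii) tracking, in the iterative statement, precisely which iterates must lie in $\paren{\Ocal\cap\Lambda}\setminus\paren{\Vcal\cap\Lambda}$ for \eqref{e:contraction} to be legitimately invoked at each step — this indexing is the only genuine subtlety, and it is routine rather than hard.
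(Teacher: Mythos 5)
Your proposal is correct and follows essentially the same route as the paper's own proof: take $y\in P_S(x)$, use hypothesis (b) to bound $\norm{(x-x^+)-(y-y^+)}$ from below by $\kappa^{-1}\norm{x-y}$, feed this into the characterization of Proposition~\ref{t:average char}\ref{t:average char iii} to obtain \eqref{e:contraction}, and then telescope via $\dist(x^{j+1},S)\le\dist(x^{j+1},\Fix T\cap S)$. No substantive differences to report.
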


 Before presenting the proof, some remarks will help clarify the technicalities.  The role of assumption 
 \ref{t:Tconv a} is clear in the two-property scheme we have set up.  The second assumption \ref{t:Tconv b} is a 
characterization of the required stability of the fixed points and their preimages.  It is helpful to consider a 
specialization of this assumption which simplifies things considerably.  
 First, by Proposition \ref{t:single-valued paa}, since $T$ is almost averaged at all points in 
 $S$, then it is single-valued  there and one can simply write $Ty$ for all $y\in S$ instead of $y^+\in Ty$.  The 
 real simplification comes when one considers the case $S=\Fix T$.  In this case $Ty=y$ for all 
 $y\in S$ and condition \eqref{e:doughnut mreg} simplifies to 
\begin{equation}
\dist(x,\Fix T)\leq \kappa   \dist(0, x-Tx) \quad\iff\quad \dist(x,\Phi^{-1}(0))\leq \kappa   \dist(0, \Phi(x))
\end{equation}%
for all $x\in \paren{\Ocal\cap \Lambda}\setminus \paren{\Vcal\cap \Lambda}$ 
where $\Phi\equiv T-\Id$.  The statement on annular
regions $\paren{\Ocal\cap \Lambda}\setminus \paren{\Vcal\cap \Lambda}$ can be viewed as 
an assumption about the existence of an {\em error bound} on that region.  
For earlier manifestations 
of this and connections to previous work on error bounds see \cite{LuoTseng93}.
In the present context, this condition will be identified in Section \ref{s:mr} with {\em metric subregularity} of 
$\Phi$, though, of course error bounds and metric subregularity are related.  

The assumptions lead to the conclusion that the iterates approach the set of fixed 
points at some rate that can be bounded below by a linear characterization on the region 
$\paren{\Ocal\cap \Lambda}\setminus \paren{\Vcal\cap \Lambda}$.  This will 
lead to {\em convergence} in Corollary \ref{t:subfirm convergence} where on all such 
annular regions there is some lower linear convergence bound.  

The possibility to 
have $S\subset \Fix T$ and not $S=\Fix T$ allows one to sidestep complications arising from the 
not-so-exotic occurrence of fixed point 
mappings that are almost nonexpansive at some points in $\Fix T$ and not at others 
(see Example \ref{eg:Pie}\ref{eg:Pie2}). 
It would be too restrictive in the statement of the theorem, however, to have $S\subseteq\Fix T$, 
since this does not allow one to tackle 
inconsistent feasibility, studied in depth in Section \ref{s:feas}.  In particular, we have in mind 
the situation where sets $A$ and $B$ do not intersect, but still the alternating projections 
mapping $T_{AP}\equiv P_AP_B$ has nice properties at points in $B$ that, while not fixed points, 
at least locally are nearest to $A$.     The full richness of the structure is used in 
Theorem \ref{t:cp ncvx} were we establish, for the first time, sufficient conditions for local linear convergence
of the method of cyclic projections for nonconvex inconsistent feasibility.

\textit{Proof of Theorem \ref{t:Tconv}}
If $\Ocal\cap\Vcal= \Ocal$ there is nothing to prove.  
Assume, then, that there is some 
$x\in \paren{\Ocal\cap \Lambda}\setminus \paren{\Vcal\cap \Lambda}$.  
Choose any $x^+\in Tx$ and define 
$\xbar^+\in T\xbar$ for $\xbar\in P_Sx$.
Inequality \eqref{e:doughnut mreg} implies 
  \begin{equation} 
\frac{1-\alpha}{\kappa^2\alpha} \norm{x-\xbar}^{2}\leq
\frac{1-\alpha}{\alpha}\norm{\paren{x -x^+}-\paren{\xbar-\xbar^+}}^2.
 \label{e:lcpg1}
 \end{equation} 
Assumption~\ref{t:Tconv a} and Proposition~\ref{t:average char}\ref{t:average char iii}, 
together with \eqref{e:lcpg1} then yield
\begin{equation}
   \norm{x^+-\xbar^+}^2\leq\paren{1+\varepsilon-\frac{1-\alpha}{\alpha\kappa^2}}\norm{x-\xbar}^2.
\end{equation}%
Note in particular that  $0\leq 1+\varepsilon-\frac{1-\alpha}{\alpha\kappa^2}$.  
Since $\xbar^+\in T(\xbar)\subset \Fix T\cap S$, this proves the first statement. 

If, in addition, $\kappa<\sqrt{\frac{1-\alpha}{\varepsilon\alpha}}$ then $c\equiv \paren{1+\varepsilon-\frac{1-\alpha}{\alpha\kappa^2}}^{1/2}<1$. 
Since clearly $S\supset \Fix T\cap S$, \eqref{e:contraction} yields 
\begin{align*}
\dist(x^{1}, S) \leq   \dist(x^{1},\Fix T\cap S)\leq c\dist(x^{0},S).
\end{align*}
If  $x^{1}\in \Ocal \setminus \Vcal$ 
then the first part of this theorem yields 
\begin{align*}
\dist(x^{2}, S) \leq   \dist(x^{2},\Fix T\cap S)
\leq c\dist(x^{1},S) \leq  c^2\dist(x^{0},S).
\end{align*}
Proceeding inductively then, the relation 
$ \dist(x^{j},\Fix T\cap S)\leq c^j\dist(x^{0}, S)$ holds until the first time 
$x^{j-1}\notin\Ocal\setminus \Vcal$.
\hfill $\Box$

The inequality \eqref{e:contraction} by itself says nothing about convergence of the iteration $x^{j+1}=Tx^j$, but it 
does clearly indicate what needs to hold in order for the iterates to move closer to a fixed point of $T$.  This is 
stated explicitly in the next corollary. 
\begin{cor}[convergence]\label{t:subfirm convergence}
   Let $\mmap{T}{\Lambda}{\Lambda}$ for $\Lambda\subset\Ebb$ and let $S\subset\reli \Lambda$ be closed and nonempty 
   with $T\xbar\subset\Fix T\cap S$ for all 
   $\xbar\in S$.  
Define $\Ocal_\delta\equiv S+\delta\Ball$ and $\Vcal_\delta\equiv\Fix T\cap S+\delta\Ball$.  
Suppose that for 
$\gamma\in(0,1)$ fixed and for all $\deltabar>0$ small enough,  
there is a triplet 
$(\varepsilon, \delta,\alpha)\in\Rp\times(0,\gamma\deltabar]\times (0,1)$ such that 
\begin{enumerate}[(a)]
   \item\label{t:subfirm convergence a} $T$ is pointwise almost averaged at all $y\in S$ with violation $\varepsilon$ 
   and averaging constant $\alpha$ on $\Ocal_\deltabar\cap \Lambda$, and  
   \item\label{t:subfirm convergence b}  at each $y^+\in Ty$ for all $y\in S$ there exists a 
$ \kappa\in \left[0, \sqrt{\frac{1-\alpha}{\varepsilon\alpha}}\right) $ such that 
\[ 
\dist(x,S)\leq  \kappa \|\paren{x-x^+}-\paren{y-y^+}\| 
\]
at each $x^+\in Tx$ for all $x\in \paren{\Ocal_\deltabar\cap \Lambda}\setminus\paren{\Vcal_\delta\cap \Lambda}$.
\end{enumerate}
Then for any $x^0$ close enough to $S$ the iterates $x^{i+1}\in Tx^i$ satisfy 
$\dist(x^i,\Fix T\cap S)\to 0$ as $i\toinf$. 
\end{cor}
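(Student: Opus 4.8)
The plan is to bootstrap Theorem~\ref{t:Tconv} along a nested sequence of shrinking neighborhoods of $S$. That theorem already contracts $\dist(\cdot,\Fix T\cap S)$ linearly on each annulus $(\Ocal_\deltabar\cap\Lambda)\setminus(\Vcal_\delta\cap\Lambda)$; what remains is to show that an orbit started near $\Fix T\cap S$ stays confined to $\Ocal_\deltabar\cap\Lambda$, necessarily enters the inner set $\Vcal_\delta$ after finitely many steps, and can then be restarted relative to a strictly smaller radius. First I would fix a radius $\deltabar_0>0$ below the threshold of the hypothesis and small enough that $\Ocal_{\deltabar_0}\cap\Lambda\subset\reli\Lambda$ (possible since $S$ is closed and contained in $\reli\Lambda$); then the hypothesis supplies a triplet $(\varepsilon_0,\delta_0,\alpha_0)$ with $\delta_0\le\gamma\deltabar_0$ and a modulus $\kappa_0<\sqrt{(1-\alpha_0)/(\varepsilon_0\alpha_0)}$, so Theorem~\ref{t:Tconv} applies with $\Ocal=\Ocal_{\deltabar_0}$, $\Vcal=\Vcal_{\delta_0}$ and produces a rate $c_0<1$. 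Since $T$ maps $S$ into $\Fix T\cap S$ and is almost nonexpansive at points of $S$ on $\Ocal_{\deltabar_0}\cap\Lambda$, any $x^0\in\Lambda$ close enough to $S$ has $\dist(Tx^0,\Fix T\cap S)<\deltabar_0$; discarding that first iterate, I may therefore assume $x^0\in\Lambda$ with $\dist(x^0,\Fix T\cap S)<\deltabar_0$.

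The engine of the induction is a one-step observation for a round with such data and rate $c<1$: if $x\in(\Ocal_\deltabar\cap\Lambda)\setminus\Vcal_\delta$ and $x^+\in Tx$, then by \eqref{e:contraction} and the trivial bound $\dist(\cdot,S)\le\dist(\cdot,\Fix T\cap S)$ (valid because $\Fix T\cap S\subseteq S$) one obtains $\dist(x^+,\Fix T\cap S)\le c\,\dist(x,S)\le c\,\dist(x,\Fix T\cap S)$, whence also $\dist(x^+,S)\le\dist(x^+,\Fix T\cap S)<c\deltabar<\deltabar$, so $x^+$ lies again in $\Ocal_\deltabar\cap\Lambda$. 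Iterating from any point of $\Ocal_\deltabar\cap\Lambda$, the orbit therefore stays in $\Ocal_\deltabar\cap\Lambda$ and $\dist(x^i,\Fix T\cap S)$ is multiplied by at most $c<1$ at each step at which $x^i\notin\Vcal_\delta$; this distance thus cannot remain $\ge\delta$ forever, so the orbit meets $\Vcal_\delta$ at a finite first time $j$, with $\dist(x^j,\Fix T\cap S)<\delta\le\gamma\deltabar$ and $\dist(x^i,\Fix T\cap S)<\deltabar$ at every index $i$ strictly between the round's start and $j$.

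Next I would iterate this to build the nested radii. At the entry time $j$ one has $\dist(x^j,S)\le\dist(x^j,\Fix T\cap S)<\delta\le\gamma\deltabar$, so I would pick the next radius $\deltabar'\in(\dist(x^j,S),\delta]$; then $\deltabar'\le\gamma\deltabar<\deltabar$ is still below the hypothesis threshold, the hypothesis furnishes a fresh triplet and, via Theorem~\ref{t:Tconv}, a rate $c'<1$, and $x^j\in\Ocal_{\deltabar'}\cap\Lambda$ by the choice of $\deltabar'$, so a new round may start at $x^j$. Repeating indefinitely produces radii with $\deltabar_{n+1}\le\gamma\deltabar_n$, hence $\deltabar_n\to0$ (this is where $\gamma<1$ enters), together with finite times $j_0\le j_1\le\cdots$ such that $\dist(x^i,\Fix T\cap S)<\deltabar_n$ at every index $i$ produced during round $n$; since each round starts at a finite index and $\deltabar_n\to0$, this gives $\dist(x^i,\Fix T\cap S)\to0$ as $i\toinf$. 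Two degenerate branches are cleared along the way: if in some round the orbit never enters $\Vcal_\delta$ then the one-step observation already forces geometric decay of $\dist(x^i,\Fix T\cap S)$ to $0$; and if $\dist(x^j,\Fix T\cap S)=0$ at some stage, then $\dist(x^j,S)=0$ forces $x^j\in S$, at which point single-valuedness from Proposition~\ref{t:single-valued paa} together with almost-nonexpansiveness at points of $S$ yields $x^j\in\Fix T\cap S$ and $Tx^j=\{x^j\}$, so the orbit is eventually constant.

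The main obstacle I anticipate is the gap between $S$ and $\Fix T\cap S$: in general $S$ is strictly larger (this is exactly the inconsistent-feasibility setting treated in Section~\ref{s:feas}), so a point can lie close to $S$ yet far from $\Fix T\cap S$, and Theorem~\ref{t:Tconv} on its own does not close the loop. The argument succeeds only because the right-hand side of the contraction \eqref{e:contraction} measures distance to $S$ rather than to $\Fix T\cap S$: one contracting step therefore brings the distance to $\Fix T\cap S$ below the current radius, which both restores confinement to $\Ocal_\deltabar$ and, combined with $\delta_n\le\gamma\deltabar_n$ and $\gamma<1$ at each restart, lets the nested radii be driven to zero. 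A lesser technical point is that Theorem~\ref{t:Tconv} asks for a modulus $\kappa$ uniform over $S$ and for the inclusion $\Ocal_\deltabar\cap\Lambda\subset\reli\Lambda$ on each round; both are secured by shrinking $\deltabar_0$ at the outset and by closedness of $S$.
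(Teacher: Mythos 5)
Your proposal is correct and follows essentially the same route as the paper's proof: invoke Theorem~\ref{t:Tconv} on each annulus $\paren{\Ocal_{\deltabar}\cap\Lambda}\setminus\paren{\Vcal_{\delta}\cap\Lambda}$, use $\dist(\cdot,S)\le\dist(\cdot,\Fix T\cap S)$ to keep the orbit confined to $\Ocal_{\deltabar}\cap\Lambda$, force finite-time entry into $\Vcal_{\delta}$ via the geometric decay (otherwise contradiction), and restart with a radius shrunk by the factor $\gamma<1$ so that the nested radii tend to zero. Your choice of the restart radius in $(\dist(x^j,S),\delta]$ rather than exactly $\dist(x^j,S)$, and your explicit treatment of the degenerate branches, are harmless cosmetic refinements of the same argument.
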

\begin{proof}
   Let $\Delta>0$ be such that for all $\deltabar\in(0,\Delta]$ there is a triplet 
$(\varepsilon, \delta,\alpha)\in\Rp\times(0,\gamma\deltabar]\times(0,1)$ for which
\ref{t:subfirm convergence a} and \ref{t:subfirm convergence b} hold.  
Choose any $x^0\in \Ocal_\Delta
\cap \Lambda$ and define $\deltabar_0\equiv\dist(x^0,S)$ so that  
\ref{t:subfirm convergence a} and \ref{t:subfirm convergence b} are satisfied  for the 
parameter values $(\varepsilon_0, \delta_0, \alpha_0)\in \Rp\times(0,\gamma\deltabar_0]\times(0,1)$.
Define $x^{(0,j)}\in Tx^{(0,j-1)}$ for $j=1,2,\dots$ with $x^{(0,0)}\equiv x^0$.  
At $j=1$, there are two possible cases: either $x^{(0,1)}\in \Vcal_{\delta_0}\cap \Ocal_{\deltabar_0}$ or 
$x^{(0,1)}\notin \Vcal_{\delta_0}\cap \Ocal_{\deltabar_0}$.
In the former case, 
\[
 \dist\paren{x^{(0,1)}, \Fix T\cap S}\leq \delta_0\leq \gamma\deltabar_0<\deltabar_0,
\]
so for $J_0=1$ it holds that 
\[
  \dist\paren{x^{(0,J_0)}, \Fix T\cap S}\leq \delta_0\leq \gamma\deltabar_0<\deltabar_0.
\]
In the latter case, since $x^{(0,0)}\in \Ocal_{\deltabar_0}\cap \Lambda$, Theorem \ref{t:Tconv} shows that 
\[
\dist\paren{x^{(0,1)}, \Fix T\cap S}\leq c_0\dist\paren{x^{(0,0)}, S}   
\]
for $c_0\equiv \sqrt{1+\varepsilon_0-\frac{1-\alpha_0}{\kappa_0^2\alpha_0}}<1$.  Moreover, clearly
$\dist\paren{x^{(0,1)}, S}\leq \dist\paren{x^{(0,1)}, \Fix T\cap S}$, so in either case  $x^{(0,1)}\in \Ocal_{\deltabar_0}$, 
and the alternative reduces to either $x^{(0,1)}\in \Vcal_{\delta_0}$ or  $x^{(0,1)}\notin \Vcal_{\delta_0}$.
Proceeding by induction for some $j\geq 1$ it holds that 
$x^{(0,\nu)}\in \paren{\Ocal_{\deltabar_0}\cap \Lambda}\setminus\paren{\Vcal_{\delta_0}\cap \Lambda}$ 
for all $\nu=0,1,2\dots j-1$ and $x^{(0,j)}\in \Ocal_{\deltabar_0}\cap \Lambda$ 
with either $x^{(0,j)}\notin \Vcal_{\delta_0}$ 
or $x^{(0,j)}\in \Vcal_{\delta_0}$.  If  $x^{(0,j)}\notin \Vcal_{\delta_0}$, 
then since $x^{(0,j)}\in\Ocal_{\deltabar_0}\cap \Lambda$, by Theorem \ref{t:Tconv},
\[
 \dist\paren{x^{(0,j+1)}, \Fix T\cap S}\leq c_0\dist\paren{x^{(0,j)}, S}.
\]

Iterating this process, there must eventually be a $J_0\in \Nbb$ such  
that 
\begin{equation}\label{conv inter1}
\dist\paren{x^{(0,J_0)}, \Fix T\cap S}\leq \delta_0\leq \gamma\deltabar_0<\deltabar_0.
\end{equation}
To see this, suppose that there is no such $J_0$.  Then 
$x^{(0,j)}\in \paren{\Ocal_{\deltabar_0}\cap \Lambda}\setminus\paren{\Vcal_{\delta_0}\cap \Lambda}$ and 
\[
 \dist\paren{x^{(0,j+1)}, \Fix T\cap S}\leq c_0\dist\paren{x^{(0,j)}, S}\leq c_0^j\dist\paren{x^{(0,0)}, S}
\]
 for all $j\geq 1$.  Since, by assumption $c_0<1$, it holds that $\dist\paren{x^{(0,j)}, \Fix T\cap S}\to 0$ at least linearly with constant $c_0$, 
in contradiction with the assumption that $x^{(0,j)}\notin\Vcal_{\delta_0}$ for all $j$. 

So, with $J_0$ being the first iteration where \eqref{conv inter1} occurs, we update the region 
$\deltabar_1\equiv \dist\paren{x^{(0,J_0)}, S}\leq \dist\paren{x^{(0,J_0)}, \Fix T\cap S}\leq \delta_0 \le \gamma\deltabar_0$, and 
set $x^{1}\equiv x^{(0,J_0)}$ and $x^{(1,0)}\equiv x^{1}$.  By assumption there is a triplet 
$(\varepsilon_1, \delta_1,\alpha_1)\in\Rp\times(0,\gamma\deltabar_1]\times(0,1)$ for which
\ref{t:subfirm convergence a} and \ref{t:subfirm convergence b} hold.  

Proceeding inductively, this  generates the sequence $\paren{x^i}_{i\in\Nbb}$ with 
\[
   x^i\equiv x^{(i-1, J_{i-1})}, \quad \deltabar_i\equiv\dist\paren{x^{i}, S}\leq \dist\paren{x^{i}, \Fix T\cap S}\leq  \gamma^{i}\deltabar_0.
\]
So $\dist\paren{x^i, \Fix T\cap S}\to 0$ as $i\to \infty$.  
As this is just a reindexing of the Picard iteration, this completes the proof.
\end{proof}

An interesting avenue of investigation would be to see to what 
extent the proof mining techniques of \cite{KohLopNic} could be applied to quantify convergence in the present setting. 

\subsection{Metric regularity}\label{s:mr}

The key insight into condition \ref{t:Tconv b} of Theorem \ref{t:Tconv} is the connection 
to {\em metric regularity} of set-valued 
mappings ({\em cf.,} \cite{VA,DontchevRockafellar14}).  
This approach to the study of algorithms has been 
advanced by several authors \cite{Pen02, Ius03, Ara05, AragonGeoffroy07, KlatteKummer09}.
We modify the concept of \textit{metric regularity with functional modulus on a set} suggested in 
\cite[Definition 2.1 (b)] {Ioffe11} and 
\cite[Definition 1 (b)]{Ioffe13} so that the property is relativized to appropriate sets for iterative methods.
Recall that $\mu:[0,\infty) \to [0,\infty)$ is a \textit{gauge function} if $\mu$ is continuous strictly increasing 
with $\mu(0)=0$ and $\lim_{t\to \infty}\mu(t)=\infty$. 

\begin{defn}[metric regularity on a set]\label{d:(str)metric (sub)reg}
$~$ Let $\mmap{\Phi}{\Ebb}{\Ybb}$, $U\subset \Ebb$, $V\subset \Ybb$.
The mapping $\Phi$ is called \emph{metrically regular with gauge $\mu$ on $U\times V$ relative to $\Lambda\subset\Ebb$} if
\begin{equation}\label{e:metricregularity}
\dist\paren{x, \Phi^{-1}(y)\cap \Lambda}\leq \mu\paren{\dist\paren{y, \Phi(x)}}
\end{equation}
holds for all $x\in U\cap \Lambda$ and $y\in V$ with $0<\mu\paren{\dist\paren{y,\Phi(x)}}$.
When the set $V$ consists of a single point, $V=\{\ybar\}$, then $\Phi$ is said to be 
\emph{metrically subregular for $\ybar$ on $U$ with gauge $\mu$ relative to $\Lambda\subset\Ebb$}.

When $\mu$ is a linear function (that is, $\mu(t)=\kappa t,\, \forall t\in [0,\infty)$), one says ``with constant $\kappa$'' instead of 
``with gauge $\mu(t) =\kappa t$''. When $\Lambda=\Ebb$, the quantifier ``relative to'' is dropped.  
When $\mu$ is linear, the smallest constant  
$\kappa$ for which \eqref{e:metricregularity} holds is called the {\em modulus} of metric regularity.
\end{defn}

The conventional concept of \textit{metric regularity} \cite{Aze06, DontchevRockafellar14, VA} 
(and \textit{metric regularity of order $\omega$, respectively} \cite{KrugerNguyen15}) at a point 
$\xbar\in \Ebb$ for $\ybar\in \Phi(\xbar)$ corresponds to the setting in Definition \ref{d:(str)metric (sub)reg} 
where $\Lambda=\Ebb$, $U$ and $V$ are {\em neighborhoods} of $\xbar$ and $\ybar$, respectively, and 
the gauge function $\mu(t)=\kappa t$ ($\mu(t)=\kappa t^{\omega}$ for metric regularity of order $\omega<1$) 
for all $t\in [0,\infty)$, with $\kappa>0$.

Relaxing the requirements on the sets $U$ and $V$ from neighborhoods to the more ambiguous sets
in Definition \ref{d:(str)metric (sub)reg} allows the same definition and terminology to 
unambiguously cover well-known 
relaxations of metric regularity such as \textit{metric subregularity} ($U$ is a neighborhood of 
$\xbar$ and $V=\{\ybar\}$, \cite{DontchevRockafellar14}) and \textit{metric hemi/semiregularity} 
($U=\{\xbar\}$ and $V$ is a neighborhood of $\ybar$ \cite[Definition 1.47]{Mord06}).  For our purposes, we will use 
the flexibility of choosing $U$ and $V$ in Definition \ref{d:(str)metric (sub)reg} to {\em exclude} the 
reference point $\xbar$ and to {\em isolate} the image point $\ybar$.  
This is reminiscent of the Kurdyka-{\L}ojasiewicz (KL) property \cite{BolDan2010} for functions 
which requires that the subdifferential posses a sharpness property near (but not at) critical points of the function.
However, since the restriction of $V$ to a point features prominently in our development, we retain the terminology 
{\em metric subregularity} to ease the technicality of the presentation.  The reader is cautioned, however, that 
our usage of metric subregularity does not precisely correspond to the usual definition 
(see \cite{DontchevRockafellar14}) since we do not require the domain $U$ to be a neighborhood. 

\begin{thm}[(sub)linear convergence with metric regularity]\label{t:metric subreg convergence}
   Let $\mmap{T}{\Lambda}{\Lambda}$ for $\Lambda\subset\Ebb$, $\Phi\equiv T-\Id$ and let 
   $S\subset \reli \Lambda$ be closed and nonempty with $TS\subset\Fix T\cap S$.
Denote $\paren{S+\delta\Ball}\cap \Lambda$ by $S_\delta$ for a nonnegative real $\delta$.
Suppose that, for all $\deltabar>0$ small enough, there are  $\gamma\in (0,1)$, a nonnegative 
sequence of scalars $\paren{\varepsilon_i}_{i\in \Nbb}$  and a sequence 
   of positive constants $\alpha_i$ bounded above by $\overline{\alpha}<1$, 
   such that, for each $i\in \Nbb$,  
\begin{enumerate}[(a)]
   \item\label{t:metric subreg convergence a} $T$ is pointwise almost averaged at all $y\in S$ with 
      averaging constant $\alpha_i$ and violation 
      $\varepsilon_i$ on  $S_{\gamma^i\deltabar}$, and
   \item\label{t:metric subreg convergence b} for
\[
R_i\equiv S_{\gamma^i\deltabar}\setminus\paren{\Fix T\cap S + \gamma^{i+1}\deltabar\Ball},
\]   
   \begin{enumerate}[(i)]
      \item\label{t:metric subreg convergence bi} 
      $\dist\paren{x, S} \le \dist\paren{x, \Phi^{-1}(\ybar)\cap \Lambda}$ for all
      $x\in R_i$ and $\ybar\in \Phi(P_S(x))\setminus \Phi(x)$,
      \item\label{t:metric subreg convergence bii} $\Phi$ is metrically regular with gauge $\mu_i$ 
      relative to $\Lambda$ on $R_i\,\times\, \Phi(P_S(R_i))$, where $\mu_i$ satisfies
	\begin{equation}\label{e:kappa-epsilon}
\sup_{x\in R_i,\ybar \in \Phi(P_S(R_i)), \ybar\notin \Phi(x)} 
\frac{\mu_i\paren{\dist\paren{\ybar,\Phi(x)}}}{\dist\paren{\ybar,\Phi(x)}}
\le\kappa_i <\sqrt{\frac{1-\alpha_i}{\varepsilon_i\alpha_i}}.
	\end{equation}
	\end{enumerate}
\end{enumerate}
Then, for any $x^0\in \Lambda$ close enough to $S$, 
the iterates $x^{j+1}\in Tx^j$ satisfy $\dist\paren{x^j,\Fix T\cap S}\to 0$ and
\begin{equation}\label{e:metric subreg linear conv}
\dist\paren{x^{j+1}, \Fix T\cap S}
\leq c_i\dist\paren{x^j, S}\quad\forall ~x^j\in R_i,
\end{equation}
where $c_i\equiv \sqrt{1+\varepsilon_i-\paren{\tfrac{1-\alpha_i}{\kappa_i^2\alpha_i}}}<1$.

In particular, if $\varepsilon_i$ is bounded above by $\varepsilonbar$ and 
$\kappa_i\leq \kappabar<\sqrt{\frac{1-\alphabar}{\alphabar\,\varepsilonbar}}$ for all $i$ large enough, 
then convergence is eventually at least linear with rate at most 
$\cbar\equiv\sqrt{1+\varepsilonbar-\paren{\tfrac{1-\alphabar}{\kappabar^2\alphabar}}}<1$.
\end{thm}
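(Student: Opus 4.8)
The plan is to read this theorem as a bookkeeping-heavy specialization of Theorem~\ref{t:Tconv} and Corollary~\ref{t:subfirm convergence}; the one genuinely new step is translating the metric-regularity/error-bound hypothesis~\ref{t:metric subreg convergence b} into the ``doughnut'' estimate~\eqref{e:doughnut mreg} on each region $R_i$, after which the rate~\eqref{e:metric subreg linear conv} is read off from~\eqref{e:contraction} and the convergence statement is the telescoping argument of Corollary~\ref{t:subfirm convergence} run with the nested regions $R_i$. Fix $\deltabar>0$ small enough that the hypotheses supply $\gamma\in(0,1)$, $(\varepsilon_i)_{i\in\Nbb}$, $(\alpha_i)_{i\in\Nbb}$ with $\alpha_i\le\overline{\alpha}<1$, and gauges $\mu_i$, shrinking $\deltabar$ if necessary so that $S_{\deltabar}\subset\reli\Lambda$ (using $S\subset\reli\Lambda$), which supplies the standing neighborhood requirement in Theorem~\ref{t:Tconv}. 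Fix an index $i$.

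First I would verify, for each $i$, the hypotheses of Theorem~\ref{t:Tconv} with the identifications $\Ocal\cap\Lambda:=S_{\gamma^i\deltabar}$, $\Vcal\cap\Lambda:=\paren{\Fix T\cap S+\gamma^{i+1}\deltabar\Ball}\cap\Lambda$, $\varepsilon:=\varepsilon_i$, $\alpha:=\alpha_i$, $\kappa:=\kappa_i$. Hypothesis~(a) of Theorem~\ref{t:Tconv} is exactly~\ref{t:metric subreg convergence a}. For hypothesis~(b), take $x\in R_i$ and $x^+\in Tx$; since $T$ is pointwise almost averaged on $S$, Proposition~\ref{t:single-valued paa} makes $T$ single-valued there, so for $\xbar\in P_S(x)\subset P_S(R_i)$ the point $\xbar^+:=T\xbar\in\Fix T\cap S$ is well-defined and $\ybar:=\Phi(\xbar)=\xbar^+-\xbar$ lies in $\Phi(P_S(R_i))$. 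As $x^+-x\in\Phi(x)$,
\[
\dist\paren{\ybar,\Phi(x)}\le\norm{(x^+-x)-\ybar}=\norm{\paren{x-x^+}-\paren{\xbar-\xbar^+}}.
\]
In the principal case $\ybar\notin\Phi(x)$ --- the only case in which~\ref{t:metric subreg convergence b} asserts anything --- part~\ref{t:metric subreg convergence bii} together with~\eqref{e:kappa-epsilon} gives $\dist\paren{x,\Phi^{-1}(\ybar)\cap\Lambda}\le\mu_i\paren{\dist(\ybar,\Phi(x))}\le\kappa_i\,\dist(\ybar,\Phi(x))$, while part~\ref{t:metric subreg convergence bi} gives $\dist(x,S)\le\dist\paren{x,\Phi^{-1}(\ybar)\cap\Lambda}$; concatenating these with the displayed inequality produces~\eqref{e:doughnut mreg} with constant $\kappa_i$. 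The residual case $\ybar\in\Phi(x)$ has to be dispatched separately --- it is vacuous whenever $S\subset\Fix T$, since then $\ybar=0$ --- and checking it, together with matching all the quantifiers across the regions, is arguably the fussiest part of the argument; granting it, Theorem~\ref{t:Tconv} applies on $R_i$ and yields~\eqref{e:metric subreg linear conv} with $c_i=\sqrt{1+\varepsilon_i-(1-\alpha_i)/(\kappa_i^2\alpha_i)}$, the bound $c_i<1$ being precisely $\kappa_i<\sqrt{(1-\alpha_i)/(\varepsilon_i\alpha_i)}$ from~\eqref{e:kappa-epsilon}.

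Next I would obtain $\dist(x^j,\Fix T\cap S)\to0$ by mirroring the proof of Corollary~\ref{t:subfirm convergence}. For $x^0\in\Lambda$ with $\deltabar:=\dist(x^0,S)$ small enough, phase~$i$ (starting from the current iterate, which lies in $S_{\gamma^i\deltabar}$) proceeds by~\eqref{e:metric subreg linear conv}: as long as the iterate remains in $R_i$, one has $\dist(x^{j+1},\Fix T\cap S)\le c_i\,\dist(x^j,S)$ with $c_i<1$, so it never escapes $S_{\gamma^i\deltabar}$ (because $\dist(\cdot,S)\le\dist(\cdot,\Fix T\cap S)$ and the latter is non-increasing along the phase) and after finitely many steps it enters $\Fix T\cap S+\gamma^{i+1}\deltabar\Ball$, whereupon $\dist(x^j,S)\le\gamma^{i+1}\deltabar$ and phase $i+1$ begins. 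Since $\gamma^i\deltabar\searrow0$, $\dist(x^j,\Fix T\cap S)\to0$.

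Finally, for the ``in particular'' clause: choose $i_0$ with $\varepsilon_i\le\varepsilonbar$ and $\kappa_i\le\kappabar$ for $i\ge i_0$. Since $\alpha\mapsto(1-\alpha)/\alpha$ is decreasing, $\alpha_i\le\overline{\alpha}$ and $\kappa_i\le\kappabar$ give
\[
c_i^2=1+\varepsilon_i-\frac{1-\alpha_i}{\kappa_i^2\alpha_i}\le 1+\varepsilonbar-\frac{1-\overline{\alpha}}{\kappabar^2\,\overline{\alpha}}=\cbar^2,
\]
and $\cbar<1$ is exactly $\kappabar<\sqrt{(1-\overline{\alpha})/(\overline{\alpha}\,\varepsilonbar)}$. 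Because $\dist(x^j,S)\to0$, the tail of the iteration lies in $\bigcup_{i\ge i_0}R_i$, so~\eqref{e:metric subreg linear conv} gives $\dist(x^{j+1},\Fix T\cap S)\le\cbar\,\dist(x^j,S)\le\cbar\,\dist(x^j,\Fix T\cap S)$ for all large $j$ --- eventual linear convergence with rate at most $\cbar$. I expect the main obstacle to be not any single estimate but precisely the uniform reduction of~\ref{t:metric subreg convergence b} to~\eqref{e:doughnut mreg} over the nested regions $R_i$ (degenerate case included), after which everything is a corollary of Theorem~\ref{t:Tconv} and Corollary~\ref{t:subfirm convergence}.
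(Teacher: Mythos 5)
Your proposal is correct and follows essentially the same route as the paper: hypotheses \ref{t:metric subreg convergence bi} and \ref{t:metric subreg convergence bii} are concatenated into the estimate $\dist(x,S)\leq \kappa_i\norm{\paren{x^+-x}-\paren{\xbar^+-\xbar}}$ on each $R_i$, after which the rate is read off from Theorem \ref{t:Tconv} and convergence from Corollary \ref{t:subfirm convergence} with the triplets $\paren{\varepsilon_i,\gamma^{i+1}\deltabar,\alpha_i}$. Your explicit flagging of the degenerate case $\ybar\in\Phi(x)$ is a fair point of care that the paper's own proof passes over silently, but it does not change the argument.
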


The first 
inequality in \eqref{e:kappa-epsilon} is a 
condition on the gauge function $\mu_i$ and would not be needed if the statement were limited to linearly 
metrically regular mappings.  Essentially, it says that the gauge function characterizing metric regularity of 
$\Phi$ can be bounded above by a linear function.    
The second inequality states that the constant of metric regularity $\kappa_i$ 
is small enough relative to the violation of the averaging property $\varepsilon_i$ to guarantee 
a linear progression of the iterates through the region $R_i$. 
\medskip

\noindent {\em Proof of Theorem \ref{t:metric subreg convergence}.}
To begin, note that by assumption \ref{t:metric subreg convergence b}, 
for any $x\in R_i$, $\xbar\in P_S(x)$, and $\ybar\in \Phi(\xbar)$ with $\ybar\notin \Phi(x)$,
\begin{equation}
\dist(x,S)\leq \dist(x,\Phi^{-1}(\ybar)\cap \Lambda) \le \mu_i\paren{\dist\paren{\ybar, \Phi(x)}}
\le \kappa_i\dist\paren{\ybar, \Phi(x)}.
\end{equation}%
Let $\ybar=\xbar^+-\xbar$ for $\xbar^+\in T\xbar$. The above statement yields 
\begin{equation}%
      \dist(x,S)\leq \kappa_i\norm{\paren{x^+-x}-\paren{\xbar^+-\xbar}} \quad \forall ~x\in R_i, \forall ~x^+\in Tx, \forall~ 
      \xbar\in P_S(x), \forall~ \xbar^+\in T\xbar.
\end{equation}%
The convergence of the sequence $\dist\paren{x^j, \Fix T\cap S}\to 0$ then follows from Corollary \ref{t:subfirm convergence} 
with the sequence of triplets $\paren{\varepsilon_i, \gamma^{i+1}\deltabar, \alpha_i}_{i\in\Nbb}$.  
By Theorem \ref{t:Tconv} the rate of convergence on $R_i$
is characterized by
\begin{equation}
   \dist\paren{x^+,\Fix T\cap S}\leq\sqrt{1+\varepsilon_i - \frac{1-\alpha_i}{\kappa_i^2\alpha_i}} \dist(x,S) \quad\forall~ x^+\in Tx,
\end{equation}%
whence \eqref{e:metric subreg linear conv} holds with constant $c_i<1$ given by \eqref{e:kappa-epsilon}.  

The final claim of the theorem follows immediately. 
\endproof

When $S=\Fix T\cap \Lambda$ in Theorem \ref{t:metric subreg convergence}, the condition 
\ref{t:metric subreg convergence b} \ref{t:metric subreg convergence bi} can be dropped from the assumptions, 
as the next corollary shows. 

\begin{cor}\label{t:str metric subreg convergence} Let $\mmap{T}{\Lambda}{\Lambda}$ for 
$\Lambda\subset\Ebb$ with $\Fix T$ nonempty and closed, $\Phi\equiv T-\Id$.
Denote $\paren{\Fix T+\delta\Ball}\cap \Lambda$ by $S_\delta$ for a nonnegative real $\delta$.
Suppose that, for all $\deltabar>0$ small enough, there are  $\gamma\in (0,1)$, a nonnegative 
sequence of scalars $\paren{\varepsilon_i}_{i\in \Nbb}$  and a sequence 
   of positive constants $\alpha_i$ bounded above by $\overline{\alpha}<1$, 
   such that, for each $i\in \Nbb$,  
\begin{enumerate}[(a)]
   \item $T$ is pointwise almost averaged at all $y\in \Fix T\cap \Lambda$ with 
      averaging constant $\alpha_i$ and violation 
      $\varepsilon_i$ on  $S_{\gamma^i\deltabar}$, and
   \item for
\[
R_i\equiv S_{\gamma^i\deltabar}\setminus\paren{\Fix T + \gamma^{i+1}\deltabar\Ball},
\]   
$\Phi$ is metrically subregular for $0$  on $R_i$ (metrically regular on $R_i\times\{0\}$) with gauge $\mu_i$ relative to $\Lambda$, 
where $\mu_i$ satisfies
	\begin{equation}\label{e:kappa-epsilon'}
\sup_{x\in R_i} 
\frac{\mu_i\paren{\dist\paren{0,\Phi(x)}}}{\dist\paren{0,\Phi(x)}}
\le\kappa_i <\sqrt{\frac{1-\alpha_i}{\varepsilon_i\alpha_i}}.
	\end{equation}
\end{enumerate}
Then, for any $x^0\in \Lambda$ close enough to $\Fix T\cap \Lambda$, 
the iterates $x^{j+1}\in Tx^j$ satisfy $\dist\paren{x^j,\Fix T\cap \Lambda}\to 0$ and
\begin{equation}%
\dist\paren{x^{j+1}, \Fix T\cap \Lambda}
\leq c_i\dist\paren{x^j, \Fix T\cap \Lambda}\quad\forall ~x^j\in R_i,
\end{equation}%
where $c_i\equiv \sqrt{1+\varepsilon_i-\paren{\tfrac{1-\alpha_i}{\kappa_i^2\alpha_i}}}<1$.

In particular, if $\varepsilon_i$ is bounded above by $\varepsilonbar$ and 
$\kappa_i\leq \kappabar<\sqrt{\frac{1-\alphabar}{\alphabar\,\varepsilonbar}}$ for all $i$ large enough, 
then convergence is eventually at least linear with rate at most 
$\cbar\equiv\sqrt{1+\varepsilonbar-\paren{\tfrac{1-\alphabar}{\kappabar^2\alphabar}}}<1$.
\end{cor}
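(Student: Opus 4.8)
The plan is to obtain Corollary \ref{t:str metric subreg convergence} as a specialization of Theorem \ref{t:metric subreg convergence} with the choice $S\equiv\Fix T\cap\Lambda$, and to show along the way that assumption \ref{t:metric subreg convergence b}\ref{t:metric subreg convergence bi} of that theorem becomes automatic under this choice. First I would record the structural consequences of this choice. By hypothesis $T$ is pointwise almost averaged at every $y\in\Fix T\cap\Lambda$ on $S_{\gamma^i\deltabar}$, hence pointwise almost nonexpansive there by the ``Consequently'' clause of Proposition \ref{t:average char}, so Proposition \ref{t:single-valued paa} yields $Ty=\{y\}$ for every such $y$. In particular $TS\subset\Fix T\cap S$, the standing requirement of Theorem \ref{t:metric subreg convergence}. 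Moreover $\Phi^{-1}(0)\cap\Lambda=\set{x\in\Lambda}{x\in Tx}=\Fix T\cap\Lambda=S$, and for any $x$ and any $\xbar\in P_S(x)$ one has $\Phi(\xbar)=T\xbar-\xbar=\{0\}$; thus $\Phi(P_S(x))=\{0\}$ whenever $P_S(x)$ is nonempty, and likewise $\Phi(P_S(R_i))=\{0\}$ when $R_i\neq\emptyset$.

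Next I would verify condition \ref{t:metric subreg convergence b} of Theorem \ref{t:metric subreg convergence} on the regions $R_i$. Fix $x\in R_i$; since $R_i$ excludes $\Fix T+\gamma^{i+1}\deltabar\Ball$ we have $x\notin\Fix T$, so $0\notin\Phi(x)$, and by the previous step the only element of $\Phi(P_S(x))\setminus\Phi(x)$ is $\ybar=0$. For that value $\Phi^{-1}(\ybar)\cap\Lambda=S$, so $\dist(x,\Phi^{-1}(\ybar)\cap\Lambda)=\dist(x,S)$ and \ref{t:metric subreg convergence bi} holds with equality --- which is precisely why it need not be assumed here. Part \ref{t:metric subreg convergence bii} then asks for metric regularity of $\Phi$ with gauge $\mu_i$ relative to $\Lambda$ on $R_i\times\Phi(P_S(R_i))=R_i\times\{0\}$, i.e. metric subregularity of $\Phi$ for $0$ on $R_i$ relative to $\Lambda$, and the supremum in \eqref{e:kappa-epsilon}, now taken over $\ybar\in\{0\}$ with $0\notin\Phi(x)$ on all of $R_i$, reduces to \eqref{e:kappa-epsilon'}. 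Hence all hypotheses of Theorem \ref{t:metric subreg convergence} are in force with the triplets $(\varepsilon_i,\gamma^{i+1}\deltabar,\alpha_i)$.

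Invoking Theorem \ref{t:metric subreg convergence} then gives $\dist(x^j,\Fix T\cap S)\to 0$ and $\dist(x^{j+1},\Fix T\cap S)\le c_i\dist(x^j,S)$ for $x^j\in R_i$, with $c_i$ as stated, together with the eventual linear rate $\cbar$ under the additional boundedness assumptions. To finish, I would translate the conclusion back: since $S=\Fix T\cap\Lambda$ we have $\Fix T\cap S=\Fix T\cap\Lambda=S$, so the displayed estimates become exactly those claimed, and $\dist(x^j,S)=\dist(x^j,\Fix T\cap\Lambda)$ makes the right-hand side match as well.

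The one substantive point to be careful about --- not a deep obstacle, but where set-valuedness genuinely enters --- is the identity $\Phi(P_S(x))=\{0\}$, which relies on $T$ being single-valued at its fixed points; this is not automatic for general set-valued $T$ but is supplied by the almost-averaging hypothesis through Propositions \ref{t:average char} and \ref{t:single-valued paa}. The remainder is bookkeeping: reconciling the corollary's $S_\delta=(\Fix T+\delta\Ball)\cap\Lambda$ with the quantity $(S+\delta\Ball)\cap\Lambda$ appearing in Theorem \ref{t:metric subreg convergence} (these coincide when $\Fix T\subset\Lambda$, in particular when $\Lambda$ is affine), and noting that $\Fix T\cap\Lambda\subset\reli\Lambda$ is inherited from the ambient set-up required to apply Theorem \ref{t:metric subreg convergence}.
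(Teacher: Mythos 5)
Your proof is correct and follows essentially the same route as the paper: specialize Theorem \ref{t:metric subreg convergence} to $S=\Fix T\cap\Lambda$, observe that $\Phi(P_S(\cdot))=\{0\}$ and $\Phi^{-1}(0)=\Fix T$ so that condition \ref{t:metric subreg convergence bi} holds automatically and \eqref{e:kappa-epsilon} collapses to \eqref{e:kappa-epsilon'}. Your explicit justification of $\Phi(P_S(x))=\{0\}$ via single-valuedness of $T$ at fixed points (Propositions \ref{t:average char} and \ref{t:single-valued paa}) is a detail the paper's proof leaves implicit, and is a welcome addition.
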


\begin{proof}
To deduce Corollary \ref{t:str metric subreg convergence} from Theorem \ref{t:metric subreg convergence}, 
it suffices to check that when $S=\Fix T\cap \Lambda$, condition \eqref{e:kappa-epsilon} becomes \eqref{e:kappa-epsilon'}, 
and condition \ref{t:metric subreg convergence bi} is always satisfied.
This follows immediately from the fact that $\Phi(P_{\Fix T\cap \Lambda}(\Ebb))=\{0\}$ and $\Phi^{-1}(0)=\Fix T$.
\end{proof}

The following example explains why gauge metric regularity on a set (Definition \ref{d:(str)metric (sub)reg}) fits well 
in the framework of Theorem \ref{t:metric subreg convergence}, whereas the conventional metric (sub)regularity does not.
\begin{eg}[a line tangent to a circle]\label{eg:4.4} In $\mathbb{R}^2$, consider the two sets
\begin{align*}
A &:= \{(u,-1)\in \Rbb^2: u\in \Rbb\},\\
B &:= \{(u,v)\in \Rbb^2: u^2+v^2=1\},
\end{align*}
and the point $\xbar = (0,-1)$.
It is well known that the alternating projection algorithm $T:=P_AP_B$ does not converge linearly to
$\xbar$ unless with the starting points on $\{(0,v)\in \Rbb^2: v\in \Rbb\}$ (in this special case, the method reaches 
$\xbar$ in one step).
Note that $T$ behaves the same if $B$ is replaced by the closed unit ball (the case of two closed convex sets). 
In particular, $T$ is averaged with constant $\alpha=2/3$ by Proposition \ref{t:av-comp av}\ref{t:av-comp av iii}.
Hence, the absence of linear convergence of $T$ here can be explained as the lack of regularity of the fixed point set 
$A\cap B=\{\xbar\}$. In fact, the mapping $\Phi:=T-\Id$ is not (linearly) metrically subregular at $\xbar$ for $0$ on any set 
$\Ball_{\delta}(\xbar)$, for any $\delta>0$.
However, $T$ does converge sublinearly to $\xbar$.  This can be characterized in two different ways.
\begin{itemize}
\item Using 
Corollary \ref{t:str metric subreg convergence}, we characterize sublinear convergence in this example as linear 
convergence 
on annular sets. To proceed, we set
\[
R_i:= \Ball_{2^{-i}}(\xbar)\setminus \Ball_{2^{-(i+1)}}(\xbar),\quad (i=0,1,\ldots).
\]
This corresponds to setting $\deltabar=1$ and $\gamma=1/2$ in Corollary \ref{t:str metric subreg convergence}.
The task that remains is to estimate the constant of metric subregularity, $\kappa_i$, of $\Phi$ on each $R_i$. Indeed, we have
\begin{align*}
\inf_{x\in R_i\cap A}\frac{\|x-Tx\|}{\|x-\xbar\|} = \frac{\|x^*-Tx^*\|}{\|x^*-\xbar\|}
= 1-\frac{1}{\sqrt{2^{-2(i+1)}+1}}:=\kappa_i>0, \quad (i=0,1,\ldots),
\end{align*}
where $x^* = (2^{-(i+1)},-1)$.

Hence, on each ring $R_i$, $T$ converges linearly to a point in $\Ball_{2^{-(i+1)}}(\xbar)$ with rate
$c_i$ not worse than $\sqrt{1-1/(2\kappa_i^2)}<1$ by Corollary \ref{t:str metric subreg convergence}.

\item The discussion above uses the linear gauge functions $\mu_i(t):=\frac{t}{\kappa_i}$ on annular 
regions, and hence a piecewise linear gauge function for the characterization of metric subregularity.
Alternatively, we can construct a smooth gauge function $\mu$ that works on neighborhoods of the 
fixed point.  For analyzing convergence of $P_AP_B$, we must have $\Phi$  
metrically subregular at $0$ with gauge $\mu$ on $\Rbb^2$ \textit{relative to $A$}.
But we have
\begin{align}\label{f_est}
\dist\paren{0,\Phi(x)} = \|x-x^+\| = f\paren{\|x-\xbar\|} = f\paren{\dist\paren{x,\Phi^{-1}(0)}},\quad  \forall x\in A,
\end{align}
where $f:[0,\infty)\to [0,\infty)$ is given by $f(t):= t\paren{1-1/\sqrt{t^2+1}}$.
The function $f$ is continuous strictly increasing and satisfies $f(0)=0$ and $\lim_{t\to \infty}f(t) =\infty$. 
Hence, $f$ is a gauge function.

We can now characterize sublinear convergence of $P_AP_B$ explicitly without resorting to annular sets.
Note first that since $f(t)<t$ for all $t\in (0,\infty)$ the function $g:[0,\infty)\to [0,\infty)$ given by
\[
g(t):= \sqrt{t^2-\frac{1}{2}(f(t))^2}
\]
is a gauge function and satisfies $g(t)< t$ for all $t\in (0,\infty)$.
Note next that $T\equiv P_AP_B$ is (for all points in $A$) averaged with constant $2/3$ together with 
\eqref{f_est}, we get for any $x\in A$
\begin{align*}
\norm{x^+-\xbar}^2 &\le  \norm{x-\xbar}^2 - (1/2)\norm{x-x^+}^2\\
&=  \norm{x-\xbar}^2 - (1/2)\paren{f\paren{\|x-\xbar\|}}^2.
\end{align*} 
This implies
\begin{align*}
\dist(x^+,S) &= \norm{x^+-\xbar} \le \sqrt{\norm{x-\xbar}^2 - (1/2)\paren{f\paren{\|x-\xbar\|}}^2}\\
&= g\paren{\norm{x-\xbar}}
= g\paren{\dist(x,S)},\quad \forall x\in A.
\end{align*} 
\end{itemize}
\hfill$\triangle$
\end{eg}
\bigskip
\begin{remark}[global (sub)linear convergence of pointwise averaged mappings]\label{r:metric subregu convergence}
As Example \ref{eg:4.4} illustrates, 
Theorem~\ref{t:metric subreg convergence} is not an asymptotic result and does not gainsay the possibility that 
the required properties hold with neighborhood $U=\Ebb$, which would then lead to a global quantification of convergence.   
First order methods for convex problems lead generically to {\em globally} averaged fixed point mappings $T$. 
Convergence for convex problems can be determined from the averaging property of $T$ and existence of fixed points. 
Hence in order to {\em quantify} convergence the only thing to be determined is the gauge of metric regularity at 
the fixed points of $T$. In this context, see\cite{borwein2015convergence}.
Example \ref{eg:4.4} illustrates how this can be done.  This instance will be revisited in Example \ref{eg:circles}. 
\end{remark}

The following proposition, taken from \cite{DontchevRockafellar14},  characterizes 
	metric subregularity in terms of the graphical derivative defined by \eqref{e:gder}.
	\begin{propn}[characterization of metric regularity {\cite[Theorems~4B.1 and 4C.2]{DontchevRockafellar14}}]\label{t:strmsr diff}
		Let $\mmap{T}{\Rbb^{n}}{\Rbb^{n}}$ have locally closed graph at 
		$(\xbar, \ybar)\in\gph T$, $\Phi\equiv T-\Id$, and $\zbar:=\ybar-\xbar$.
		Then $\Phi$ is metrically subregular for $0$ on $U$ (metrically regular on $U\times \{\zbar\}$) with constant 
		$\kappa$ for $U$ some neighborhood of 
		$\xbar$ satisfying $U\cap \Phi^{-1}(\zbar)=\{\xbar\}$ if and only if the graphical derivative satisfies 
		\begin{equation}\label{e:gd strmr}
		   D\Phi(\xbar|\zbar)^{-1}(0)=\{0\}.
		\end{equation}
		If, in addition,  $T$ is single-valued and continuously differentiable on 
		$U$, then the two conditions hold if and only if $\nabla \Phi$ has rank $n$ at $\xbar $ with 
 		$\norm{\ecklam{\ecklam{\nabla
 		\Phi\left(x\right)}^\intercal}^{-1}} \leq \kappa$
 		for all $x$ on $U$.
	\end{propn}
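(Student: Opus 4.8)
The plan is to read both equivalences off the corresponding statements for $T$ in \cite{DontchevRockafellar14}, exploiting that $\Phi=T-\Id$ is obtained from $T$ by the invertible affine change of coordinates $(x,y)\mapsto(x,y-x)$ on the graph. This map is a homeomorphism of $\Ebb\times\Ebb$ carrying $\gph T$ onto $\gph\Phi$ (so $\gph\Phi$ is locally closed at $(\xbar,\zbar)$ because $\gph T$ is at $(\xbar,\ybar)$, where $\zbar=\ybar-\xbar$) and carrying the tangent cone $\Tcal_{\gph T}(\xbar,\ybar)$ onto $\Tcal_{\gph\Phi}(\xbar,\zbar)$; hence $v\in D\Phi(\xbar\vert\zbar)(u)\iff v+u\in DT(\xbar\vert\ybar)(u)$, and in particular $D\Phi(\xbar\vert\zbar)^{-1}(0)=\{0\}$ exactly when the only fixed point of the positively homogeneous mapping $DT(\xbar\vert\ybar)$ is $0$. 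I would also note at the outset that, since $U$ is assumed to be a neighbourhood of $\xbar$ with $U\cap\Phi^{-1}(\zbar)=\{\xbar\}$, the subregularity estimate $\dist(x,\Phi^{-1}(\zbar)\cap U)\le\kappa\,\dist(\zbar,\Phi(x))$ collapses to $\norm{x-\xbar}\le\kappa\,\dist(\zbar,\Phi(x))$ on $U$; i.e.\ the hypothesis is exactly \emph{strong} metric subregularity of $\Phi$ at $\xbar$ for $\zbar$ (equivalently, isolated calmness of $\Phi^{-1}$ at $\zbar$ for $\xbar$).

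With that translation in hand, the first equivalence is just the graphical-derivative criterion for strong metric subregularity, \cite[Theorems~4B.1 and 4C.2]{DontchevRockafellar14}, applied to $\Phi$ at $(\xbar,\zbar)$: $\Phi$ is strongly metrically subregular there (with some constant $\kappa$ on some neighbourhood $U$ of $\xbar$) if and only if $D\Phi(\xbar\vert\zbar)^{-1}(0)=\{0\}$. So this half needs nothing beyond quoting the cited theorem once the set-up has been recognised, the graph being locally closed as noted above.

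For the $C^1$ addendum, if $T$ is single-valued and $C^1$ on $U$ then so is $\Phi$, with $\nabla\Phi(x)=\nabla T(x)-\Id$; the graphical derivative at $(\xbar,\zbar)$ is then the linear map $u\mapsto\nabla\Phi(\xbar)u$, so $D\Phi(\xbar\vert\zbar)^{-1}(0)=\{0\}$ is equivalent to injectivity of $\nabla\Phi(\xbar)$, i.e.\ (as the matrix is $n\times n$) to $\rank\nabla\Phi(\xbar)=n$. In that case the classical inverse function theorem part of \cite[Theorem~4B.1]{DontchevRockafellar14} makes $\Phi$ a local $C^1$ diffeomorphism with $\nabla(\Phi^{-1})(\Phi(x))=[\nabla\Phi(x)]^{-1}$ near $\xbar$; differentiating $\Phi^{-1}$, using continuity of $x\mapsto[\nabla\Phi(x)]^{-1}$ and $\norm{M^{-1}}=\norm{(M^\intercal)^{-1}}$, gives the sharp modulus $\sup_{x\in U}\norm{\ecklam{\ecklam{\nabla\Phi(x)}^\intercal}^{-1}}$ on a suitably small convex $U$, while conversely any admissible $\kappa$ dominates this supremum. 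The step I expect to need the most care is precisely the bookkeeping about which regularity notion is in play: the isolation clause $U\cap\Phi^{-1}(\zbar)=\{\xbar\}$ means the proposition is about \emph{strong} metric subregularity, not the weaker ``metric subregularity relative to $\Lambda$'' used elsewhere in the paper, so one must invoke the graphical-derivative characterisation for the strong version and keep the roles of $\xbar$, $\ybar=T\xbar$ and $\zbar=\ybar-\xbar$ straight; once that is fixed there is no further analytic content, the two displayed conditions being transported verbatim from \cite[Theorems~4B.1 and 4C.2]{DontchevRockafellar14} through the affine change of variables.
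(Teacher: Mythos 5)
Your proposal is correct and matches the paper's treatment: the proposition is stated there purely as a quotation of \cite[Theorems~4B.1 and 4C.2]{DontchevRockafellar14} with no written proof, and your argument simply makes explicit the intended transfer of those theorems to $\Phi=T-\Id$ via the affine change of coordinates $(x,y)\mapsto(x,y-x)$ on the graph, together with the correct observation that the isolation clause $U\cap\Phi^{-1}(\zbar)=\{\xbar\}$ identifies the hypothesis as strong metric subregularity. The only point to keep straight is that the collapse of $\dist(x,\Phi^{-1}(\zbar))$ to $\norm{x-\xbar}$ may require shrinking $U$, which is harmless since the statement quantifies over ``some neighborhood.''
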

While the characterization \eqref{e:gd strmr} appears daunting, the property  
comes almost for free for polyhedral mappings.  

\begin{propn}[polyhedrality implies metric subregularity]\label{t:polyhedrality-strong msr}
Let $\Lambda\subset \Ebb$ be an affine subspace and $\mmap{T}{\Lambda}{\Lambda}$.  If $T$ is polyhedral and 
$\Fix T\cap \Lambda$ is an isolated point, $\{\xbar\}$, 
then $\Phi\equiv T-\Id$ is metrically subregular for $0$ on $U$ (metrically regular on $U\times \{0\}$) relative to 
$\Lambda$ with some constant $\kappa$ for some neighborhood $U$ of $\xbar$. In particular, $U\cap \Phi^{-1}(0)=\{\xbar\}$.     
\end{propn}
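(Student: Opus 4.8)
The plan is to restrict everything to $\Lambda$ and then invoke the graphical-derivative characterization of metric subregularity, Proposition~\ref{t:strmsr diff}. \textbf{Reduction.} Put the origin at $\xbar$, let $L\equiv\Lambda-\xbar$ be the direction subspace, and identify $\Lambda$ with the Euclidean space $L$. Let $\Phi_\Lambda$ be the restriction of $\Phi\equiv T-\Id$ to $\Lambda$; for $x\in\Lambda$ one has $T(x)\subset\Lambda$, hence $\Phi_\Lambda(x)=T(x)-x\subset L$, so $\gph\Phi_\Lambda\subset\Lambda\times L$ and, after recentering, $\gph\Phi_\Lambda-(\xbar,0)\subset L\times L$. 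Since $T$ is polyhedral, $\gph T$ is a finite union of polyhedral convex subsets of $\Ebb\times\Ebb$; intersecting with the polyhedral convex set $\Lambda\times\Lambda$ and applying the linear bijection $(x,y)\mapsto(x,y-x)$ (which maps polyhedral convex sets to polyhedral convex sets) shows that $\gph\Phi_\Lambda$ is again a finite union of polyhedral convex sets, in particular closed. Thus $\Phi_\Lambda$ has locally closed graph at $(\xbar,0)\in\gph\Phi_\Lambda$ (the membership holds because $\xbar\in\Fix T$), and $\Phi_\Lambda^{-1}(0)=\Fix T\cap\Lambda$, which has $\xbar$ as an isolated point by hypothesis. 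Since metric subregularity of $\Phi$ for $0$ on a neighbourhood $U$ of $\xbar$ relative to $\Lambda$ is precisely metric subregularity of $\Phi_\Lambda$ for $0$ on the relatively open neighbourhood $U\cap\Lambda$, it suffices to establish the latter.

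\textbf{The derivative condition.} By Proposition~\ref{t:strmsr diff} (applied to $\Phi_\Lambda$ with $\ybar=\xbar$, $\zbar=0$) it is enough to verify $D\Phi_\Lambda(\xbar|0)^{-1}(0)=\{0\}$, i.e.\ that $(u,0)\in\Tcal_{\gph\Phi_\Lambda}(\xbar,0)$ forces $u=0$. Writing $\gph\Phi_\Lambda=\bigcup_{i=1}^N P_i$ with $P_i$ polyhedral convex, the contingent cone of a finite union equals the union of the contingent cones of the pieces that contain the point, and for a polyhedral convex $P_i\ni(\xbar,0)$ one has $\Tcal_{P_i}(\xbar,0)=\Rbb_+\bigl(P_i-(\xbar,0)\bigr)$, a cone with the property that each of its nonzero elements $d$ satisfies $(\xbar,0)+\epsilon d\in P_i$ for all small enough $\epsilon>0$. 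Hence if $(u,0)\in\Tcal_{\gph\Phi_\Lambda}(\xbar,0)$ with $u\neq0$, then $(\xbar+\epsilon u,0)\in\gph\Phi_\Lambda$ for some $\epsilon>0$, i.e.\ $\xbar+\epsilon u\in\Phi_\Lambda^{-1}(0)=\Fix T\cap\Lambda$; moreover $u\in L$ since the contingent cone lies in $L\times L$, so $\xbar+\epsilon u\in\Lambda$. Letting $\epsilon\downarrow0$ then produces points of $\Fix T\cap\Lambda$ distinct from $\xbar$ converging to $\xbar$, contradicting that $\xbar$ is isolated in $\Fix T\cap\Lambda$. Therefore $D\Phi_\Lambda(\xbar|0)^{-1}(0)=\{0\}$, and Proposition~\ref{t:strmsr diff} furnishes a constant $\kappa>0$ and a neighbourhood $U$ of $\xbar$, with $U\cap\Phi^{-1}(0)\cap\Lambda=\{\xbar\}$, on which $\Phi$ is metrically subregular for $0$ relative to $\Lambda$ with constant $\kappa$.

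\textbf{Where the work is.} Almost everything above is bookkeeping about the affine subspace $\Lambda$; the one genuine step is the implication ``$(u,0)$ is tangent to $\gph\Phi_\Lambda$ at $(\xbar,0)$'' $\Longrightarrow$ ``$\xbar+\epsilon u$ actually lies in $\Phi_\Lambda^{-1}(0)$ for small $\epsilon>0$'', which is false for general closed-graph mappings and is exactly where polyhedrality enters: tangent directions of a polyhedral set are realized, not merely approached. As an alternative route that avoids graphical derivatives, one can instead invoke Robinson's theorem that a polyhedral multifunction is locally upper Lipschitz with a uniform modulus (see, e.g., \cite{DontchevRockafellar14}): applied to the polyhedral mapping $\Phi_\Lambda^{-1}$ at $(0,\xbar)$ this yields calmness of $\Phi_\Lambda^{-1}$ at $0$ for $\xbar$, which is synonymous with metric subregularity of $\Phi_\Lambda$ for $0$; a short case split according to whether $\dist(0,\Phi(x))$ falls inside Robinson's neighbourhood of $0$, together with shrinking $U$ to exploit the isolation of $\xbar$, then gives the estimate with some $\kappa$ and $U\cap\Phi^{-1}(0)\cap\Lambda=\{\xbar\}$.
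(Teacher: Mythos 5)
Your proof is correct, but your main argument takes a genuinely different route from the paper's. The paper's entire proof is two sentences: since $T$ is polyhedral, so is $\Phi^{-1}=(T-\Id)^{-1}$, and the conclusion is read off from \cite[Propositions 3I.1 and 3I.2]{DontchevRockafellar14} --- Robinson's theorem that polyhedral multifunctions are outer Lipschitz continuous, combined with the isolatedness of $\xbar$ in $\Phi^{-1}(0)\cap\Lambda$. That is precisely the ``alternative route'' you sketch in your closing paragraph. Your primary argument instead verifies the graphical-derivative criterion of Proposition~\ref{t:strmsr diff} directly: you restrict to $\Phi_\Lambda$, note that $\gph\Phi_\Lambda$ is a finite union of convex polyhedra (hence closed, with $\Tcal$ of the union equal to the union of the $\Tcal$'s of the pieces through the point), and exploit the fact that tangent directions of a convex polyhedron are \emph{realized} rather than merely approached, so a nonzero $u$ with $(u,0)\in\Tcal_{\gph\Phi_\Lambda}(\xbar,0)$ would produce fixed points in $\Lambda$ arbitrarily close to but distinct from $\xbar$, contradicting isolation. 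Both routes are sound and both ultimately rest on material in \cite{DontchevRockafellar14}; yours is more self-contained relative to this paper (it uses only Proposition~\ref{t:strmsr diff}, which is already stated, plus elementary polyhedral geometry), it isolates exactly where polyhedrality enters, and it treats the relativization to the affine subspace $\Lambda$ explicitly, which the paper's citation glosses over. The paper's route buys brevity and sidesteps the tangent-cone-of-a-union bookkeeping. One small point in your favor: the concluding identity is most naturally read as $U\cap\Phi^{-1}(0)\cap\Lambda=\{\xbar\}$, as you state it, since isolation of $\xbar$ in $\Fix T\cap\Lambda$ says nothing about fixed points off $\Lambda$.
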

\begin{proof}
If $T$ is polyhedral, so is 
$\Phi^{-1}\equiv (T-\Id)^{-1}$. The statement now follows from 
\cite[Propositions  3I.1 and 3I.2]{DontchevRockafellar14}, since 
$\Phi^{-1}$ is polyhedral and $\xbar$ is an isolated point of $\Phi^{-1}(0)\cap\Lambda$.
\end{proof}

\begin{propn}[local linear convergence: polyhedral fixed point iterations]\label{t:polyhedral convergence}
 Let $\Lambda\subset \Ebb$ be an affine subspace and $\mmap{T}{\Lambda}{\Lambda}$ be pointwise almost averaged 
at $\{\xbar\}= \Fix T\cap \Lambda$ on $\Lambda$ with violation constant $\varepsilon$ and averaging constant $\alpha$.
If  $T$ is polyhedral, then there
is a neighborhood $U$ of $\xbar$ such that
\[
\norm{x^+-\xbar}\leq c\norm{x-\xbar}\quad\forall  x\in U\cap \Lambda,~ x^+\in Tx,   
\]
where $c=\sqrt{1+\varepsilon - \frac{1-\alpha}{\kappa^2\alpha}}$ and $\kappa$ is the modulus of metric 
subregularity of $\Phi\equiv T-\Id$ for $0$ on $U$ relative to $\Lambda$. If, in addition 
$\kappa< \sqrt{(1-\alpha)/(\alpha\varepsilon)}$, then the fixed point iteration 
$x^{j+1}\in Tx^j$ converges linearly to $\xbar$ with rate 
$c<1$ for all $x^0\in U\cap \Lambda$. 
\end{propn}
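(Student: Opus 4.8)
The plan is to read off the one-step contraction from Theorem~\ref{t:Tconv} applied with the singleton $S\equiv\{\xbar\}$, using polyhedrality only as a vehicle to supply metric subregularity of $\Phi\equiv T-\Id$ at $\xbar$. First I would invoke Proposition~\ref{t:polyhedrality-strong msr}: since $T$ is polyhedral and $\Fix T\cap\Lambda=\{\xbar\}$ is an isolated point, $\Phi$ is metrically subregular for $0$ on a neighborhood $U$ of $\xbar$ relative to $\Lambda$ with some constant, and $U\cap\Phi^{-1}(0)=\{\xbar\}$; shrinking $U$ if necessary, I take $U=\Ball_\rho(\xbar)$, and let $\kappa$ be the modulus of subregularity on $U$ (we may assume $\kappa>0$, the case $\kappa=0$ forcing $\Lambda=\{\xbar\}$ and hence triviality). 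Since $\Phi^{-1}(0)\cap\Lambda=\Fix T\cap\Lambda=\{\xbar\}$, metric subregularity reads
\[
\|x-\xbar\| = \dist\paren{x,\Phi^{-1}(0)\cap\Lambda} \leq \kappa\,\dist\paren{0,\Phi(x)} \leq \kappa\,\|x-x^+\|
\]
for all $x\in U\cap\Lambda$ and all $x^+\in Tx$, the last inequality because $\dist(0,\Phi(x))=\inf_{x^+\in Tx}\|x-x^+\|$ does not exceed any particular selection.

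Next I would check the hypotheses of Theorem~\ref{t:Tconv} with $\Lambda$ as given, $S\equiv\{\xbar\}$ and $\Ocal\equiv U$. The set $S$ is nonempty and closed, and since an affine subspace is its own relative interior, $S\subset\reli\Lambda=\Lambda$ and $\Ocal\cap\Lambda\subset\reli\Lambda$. Because $T$ is pointwise almost averaged at $\xbar$ it is pointwise almost nonexpansive there (Proposition~\ref{t:average char}), hence single-valued at $\xbar$ (Proposition~\ref{t:single-valued paa}), so $T\xbar=\{\xbar\}=\Fix T\cap S$; in particular $Ty\subset\Fix T\cap S$ for all $y\in S$. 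Hypothesis~\ref{t:Tconv a} is exactly the assumption of the proposition restricted to $\Ocal\cap\Lambda\subseteq\Lambda$, with the given $\varepsilon$ and $\alpha$. For hypothesis~\ref{t:Tconv b} I take $\Vcal$ to be any open ball centered at $\xbar$: since $S=\{\xbar\}$ and $T\xbar=\{\xbar\}$, the vector $y-y^+$ appearing in \eqref{e:doughnut mreg} vanishes, so \eqref{e:doughnut mreg} collapses precisely to $\|x-\xbar\|\leq\kappa\,\|x-x^+\|$, which the display above supplies throughout $U\cap\Lambda$, a fortiori on $\paren{\Ocal\cap\Lambda}\setminus\paren{\Vcal\cap\Lambda}$.

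Theorem~\ref{t:Tconv} then yields $\|x^+-\xbar\|\leq c\,\|x-\xbar\|$ for every $x^+\in Tx$ whenever $x\in\paren{U\cap\Lambda}\setminus\paren{\Vcal\cap\Lambda}$, with $c=\sqrt{1+\varepsilon-\tfrac{1-\alpha}{\kappa^2\alpha}}$ (nonnegativity of the radicand being part of the conclusion of that theorem). Since $\Vcal$ may be taken to be a ball of arbitrarily small radius and the estimate is trivially valid at $x=\xbar$, it holds for all $x\in U\cap\Lambda$, which is the first assertion. For the second, $\kappa<\sqrt{(1-\alpha)/(\alpha\varepsilon)}$ is equivalent to $c^2=1+\varepsilon-\tfrac{1-\alpha}{\kappa^2\alpha}<1$. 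As $U=\Ball_\rho(\xbar)$ and $T$ maps $\Lambda$ into $\Lambda$, an induction shows that any $x^0\in U\cap\Lambda$ generates iterates $x^{j+1}\in Tx^j$ with $\|x^{j+1}-\xbar\|\leq c\,\|x^j-\xbar\|\leq c^{\,j+1}\|x^0-\xbar\|<\rho$, so every $x^j$ remains in $U\cap\Lambda$ and the contraction propagates; hence $x^j\to\xbar$ R-linearly with rate at most $c<1$.

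The substantive work is already packaged in the cited results: Proposition~\ref{t:polyhedrality-strong msr} converts polyhedrality into metric subregularity, and Theorem~\ref{t:Tconv} converts almost-averagedness plus subregularity into a one-step contraction. The only delicate points in assembling them are recognising that, for a singleton $S$, condition~\ref{t:Tconv b} of Theorem~\ref{t:Tconv} \emph{is} the metric subregularity of $\Phi$ at $\xbar$ and must be read as holding for every selection $x^+\in Tx$ (not merely the one realising $\dist(0,\Phi(x))$), and the routine precaution of taking $U$ to be a ball so that the Picard iterates cannot leave the region on which the contraction estimate is available.
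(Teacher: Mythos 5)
Your proposal is correct and follows essentially the same route as the paper: Proposition~\ref{t:polyhedrality-strong msr} converts polyhedrality of $T$ plus the isolatedness of $\Fix T\cap\Lambda$ into metric subregularity of $\Phi$ relative to $\Lambda$, and the abstract contraction machinery does the rest. The only difference is that the paper cites Corollary~\ref{t:str metric subreg convergence} wholesale, whereas you unpack it into a direct application of Theorem~\ref{t:Tconv} with $S=\{\xbar\}$ (shrinking $\Vcal$ to dispose of the annular exclusion) followed by an explicit induction keeping the iterates in $U\cap\Lambda$ -- a harmless and, if anything, more self-contained rendering of the same argument.
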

\begin{proof}
The result follows immediately from Proposition \ref{t:polyhedrality-strong msr} and Corollary \ref{t:str metric subreg convergence}.
\end{proof}

\section{Applications}\label{s:apps}
The idea of the previous section is simple.  Formulated as Picard iterations of a fixed point mapping $T$, 
in order to establish the quantitative convergence of an algorithm, one must establish two properties of this mapping:  
first that $T$ is almost averaged and second that 
it is metrically subregular at fixed points relative to an appropriate subset.  
  This section serves as a tutorial for how to do this for fundamental first-order algorithms.  
Each of the problems 
studied below represents a distinct region on the map of numerical analysis, each with its own dialect.  
Part of our goal is to show that the phenomena that these different dialects  
describe sort into one of the two more general properties of fixed point mappings established above.  
While the technicalities can become quite dense, particularly for feasibility, the two principles above
offer a reliable guide through the details.    

\subsection{Feasibility}\label{s:feas}

The feasibility problem is to find $\xbar\in\cap_{j=1}^m\Omega_j$.  If the intersection is empty, 
the problem is called {\em inconsistent}, but 
a meaningful solution still can be found in the sense of best approximation in the case of just 
two sets, or in some other appropriate sense when there are three or more sets.  The most
prevalent algorithms for solving these problems are built on projectors onto the individual 
sets (indeed, we are aware of no other approach to the problem). The regularity 
of the fixed point mapping $T$ that encapsulates a particular algorithm 
(in particular, pointwise almost averaging and coercivity at the fixed point set)  stems 
from the regularity of the underlying projectors and the way the projectors are put together 
to construct $T$.  Our first task is to show in what way the regularity of the underlying projectors is inherited from 
the regularity of the sets $\Omega_j$.

\subsubsection{Elemental set regularity}
The following definition of what we call {\em elemental regularity} was first presented in 
\cite[Definition 5]{KruLukNgu16}.  This places under one schema the many different 
kinds of set regularity appearing in \cite{LewisLukeMalick09, BauLukePhanWang13a, 
BauLukePhanWang13b, HesseLuke13, BauLukePhanWang14, NolRon16}.  
\begin{definition}[elemental regularity of sets]\label{d:set regularity}
Let $\Omega\subset\Euclid$ be nonempty and let $\paren{\ybar,\vbar}\in\gph\paren{\ncone{\Omega}}$.
\begin{enumerate}[(i)]
   \item\label{d:geom subreg} $\Omega$ is 
   {\em elementally subregular of order $\sigma$ relative to $\Lambda$ 
at $\xbar$ for $\paren{\ybar,\vbar}$ with constant $\varepsilon$}
if there exists a neighborhood $U$ of $\xbar$
such that 
\begin{equation}\label{e:geom subreg}
\ip{\vbar - \paren{x-x^+}}{x^+ - \ybar}\leq
\varepsilon\norm{\vbar - \paren{x-x^+}}^{1+\sigma}\norm{x^+ - \ybar},\quad \forall x\in \Lambda \cap U,~ x^+\in P_{\Omega}(x).
\end{equation}

   \item\label{d:uni geom subreg} The set $\Omega$ 
is said to be {\em uniformly} elementally subregular 
of order $\sigma$ relative to $\Lambda$ at $\xbar$ for 
$\paren{\ybar,\vbar}$ if for any $\varepsilon>0$ 
there is a neighborhood $U$ (depending on $\varepsilon$) of $\xbar$  such that \eqref{e:geom subreg} holds.

   \item\label{d:geom reg}
The set $\Omega$ is said to be {\em elementally regular of order $\sigma$ 
at $\xbar$ for $\paren{\ybar,\vbar}$ with constant $\varepsilon$} if it is elementally 
subregular of order $\sigma$ relative to $\Lambda=\Omega$ at $\xbar$ for all $\paren{\ybar,v}$ with 
constant $\varepsilon$ where 
$v\in\ncone{\Omega}(\ybar)\cap V$ for some neighborhood $V$ of $\vbar$.

 \item
The set $\Omega$ is said to be {\em uniformly} elementally regular of order $\sigma$ 
at $\xbar$ for $\paren{\ybar,\vbar}$ if it is uniformly elementally subregular of order 
$\sigma$ relative to $\Lambda=\Omega$ at $\xbar$ for all $\paren{\ybar,v}$ where 
$v\in\ncone{\Omega}(\ybar)\cap V$ for some neighborhood $V$ of $\vbar$. 
\end{enumerate}
If $\Lambda=\{\xbar\}$ in \ref{d:geom subreg} or \ref{d:uni geom subreg}, then the respective qualifier
``relative to'' is dropped.
If $\sigma=0$, then the respective qualifier ``of order'' 
is dropped in the description of the properties.
The {\em modulus of elemental (sub)regularity} is the infimum over all
$\varepsilon$ for which \eqref{e:geom subreg} holds.  
\end{definition}

In all properties in Definition \ref{d:set regularity},
$\xbar$ need not be in $\Lambda$ and $\ybar$ need not be in either $U$ or $\Lambda$.
In case of order $\sigma=0$, the properties are trivial for any constant $\varepsilon\ge 1$.
When saying a set is not elementally (sub)regular but without specifying a constant, 
it is meant for any constant $\varepsilon<1$.

\begin{eg}\label{eg:regularity examples}$~$
\begin{enumerate}[(a)]
 \item\label{eg:cross}  (cross)  
Recall the set in Example \ref{eg:cross1},
 \[
    A=\mathbb{R}\times\{0\}\cup \{0\}\times\mathbb{R}.
 \]
This example is of particular interest for the study of {\em sparsity constrained optimization.}
$ A$ is elementally regular at any $\xbar\neq 0$, say $\|\xbar\|>\delta>0$, for all
$(a,v)\in \gph{\ncone{A}}$ where $a\in \Ball_{\delta}(\xbar)$
with constant $\varepsilon=0$ and neighborhood $\Ball_\delta(\xbar)$.
The set $ A$ is not elementally regular at $\xbar=0$ for any $(0,v)\in \gph{\ncone{A}}$
since $\ncone{A}(0)= A$.
However, $ A$ is elementally subregular at $\xbar=0$ for all $(a,v)\in\gph{\ncone{A}}$
with constant $\varepsilon=0$ and neighborhood $\Ebb$ since all vectors $a\in A$ are orthogonal to $\ncone{A}(a)$. \\

\item\label{eg:circle} (circle)
The humble circle is central to the phase retrieval problem, 
\[
  A=\set{ (x_1,x_2)\in\Rtw}{ x_1^2+x_2^2=1}.
 \]
The set $ A$ is uniformly elementally regular at any $\xbar\in  A$ for all $(\xbar,v)\in \gph{\ncone{A}}$.
Indeed, note first that for any $\xbar \in  A$, $\ncone{A}(\xbar)$ consists of the line passing through the origin and $\xbar$.
Now, for any $\varepsilon \in (0,1)$, we choose $\delta = \varepsilon$.
Then for any $x\in  A\cap \Ball_{\delta}(\xbar)$, it holds $\cos\angle(-\xbar,x-\xbar)\le \delta\le \varepsilon$.
Hence, for all $x\in  A\cap \Ball_{\delta}(\xbar)$ and $v\in \ncone{A}(\xbar)$,
 \[
   \ip{v}{x-\xbar} = \cos\angle(v,x-\xbar)\|v\|\|x-\xbar\| \leq \cos\angle(-\xbar,x-\xbar)\|v\|\|x-\xbar\| \le \varepsilon\|v\|\|x-\xbar\|.
 \]

\item\label{eg:Pie1r} (Packman eating a piece of pizza)
Consider again the sets 
 \begin{eqnarray*} 
  A&=&\set{ (x_1,x_2)\in\Rtw}{ x_1^2+ x_2^2\leq 1, ~-1/2x_1\leq x_2\leq x_1, x_1\geq 0}\subset\Rtw\\
  B&=&\set{ (x_1,x_2)\in\Rtw}{ x_1^2+ x_2^2\leq 1, ~x_1\leq |x_2|}\subset\Rtw.\\
\xbar&=&(0,0)
 \end{eqnarray*} 
from Example \ref{eg:Pie}\ref{eg:Pie2}.
The set $ B$ is elementally subregular relative to $A$ at $\xbar = 0$ for all
$(b,v)\in \gph{\paren{\ncone{B}\cap A}}$ with constant $\varepsilon=0$ and neighborhood
$\Ebb$ since for all $a\in A$, $a_B\in P_{B}(a)$ and $v\in \ncone{B}(b)\cap A$, there holds
\[
 \ip{v-(a-a_B)}{a_B-b} = \ip{v}{a_B-b}-\ip{a-a_B}{a_B-b} = 0.
\]
The set $B$, however, is not elementally regular at $\xbar=0$ for any $(0,v)\in\gph{\ncone{B}}$
because by choosing $x=tv \in  B$ (where $0\neq v\in  B\cap \ncone{B}(0)$, $t\downarrow 0$),
we get $\ip{v}{x}=\|v\|\|x\|>0$.
\end{enumerate}
\end{eg}

To see how the language of elemental regularity unifies the existing terminology, we list the 
following equivalences first established in \cite[Proposition 4]{KruLukNgu16}. 
\begin{propn}[equivalences of elemental (sub)regularity]\label{t:set regularity equivalences}
Let $A$, $A'$ and $B$ be closed nonempty subsets of $\Ebb$.
\begin{enumerate}[(i)]
\item\label{t:Hoelder char}
Let $ A\cap B\neq \emptyset$ and
suppose that there is a neighborhood $W$ of
$\xbar\in A\cap B$ and a constant $\varepsilon>0$ such that
for each
\begin{equation}\label{e:Hoelder normals}
(a,v)\in V\equiv \set{(b_A,u)\in \gph{\pncone{A}}}
{u=b-b_A,
       \begin{array}{c}\mbox{ for }b \in  B\cap W\\
		      \mbox{ and }b_A \in P_A(b)\cap W
       \end{array}},
\end{equation}
it holds that
\begin{equation}\label{e:xbar neighborhood}
\xbar\in\intr U(a,v) \mbox{ where } U(a,v)\equiv\Ball_{(1+\varepsilon^2)\|v\|}(a+v).
\end{equation}
Then, $A$ is $\sigma$-H\"older regular relative to $B$ at $\xbar$
in the sense of \cite[Definition 2]{NolRon16}  
with constant $c=\varepsilon^2$ and neighborhood $W$ of $\xbar$ 
if and only if $ A$ is elementally subregular of order $\sigma$ relative to
$ A\cap P^{-1}_B\paren{a+v}$ at $\xbar$
for each $(a,v)\in V$ with constant $\varepsilon=\sqrt{c}$ and the respective neighborhood
$U(a,v)$.

\item\label{t:L-eps-del-subreg char}
Let $B\subset A$.  The set $A$ is $(\varepsilon,\delta)$-subregular relative to $B$ 
at $\xbar\in  A$ in the sense of \cite[Definition 2.9]{HesseLuke13} if and only if $ A$ is
elementally subregular relative to $B$ at $\xbar$ for all
$(a,v)\in \gph{\pncone{ A}}$ where $a\in \Ball_{\delta}(\xbar)$
with constant $\varepsilon$ and neighborhood $\Ball_{\delta}(\xbar)$.
Consequently, $(\varepsilon,\delta)$-subregularity implies 0-H\"older regularity.

\item\label{t:L-eps-del-regular equivalence}
If the set $ A$ is $( \Ebb,\varepsilon,\delta)$-regular at $\xbar$ in the sense of 
\cite[Definition 8.1]{BauLukePhanWang13a}, then $ A$ is elementally regular at $\xbar$
for all $(\xbar,v)$ with constant $\varepsilon$, where $0\neq v\in \pncone{ A}(\xbar)$.
Consequently, $( \Ebb,\varepsilon,\delta)$-regularity implies $(\varepsilon,\delta)$-subregularity. 

\item\label{t:Clarke char}
The set $ A$ is {\em Clarke regular} at $\xbar\in  A$ \cite[Definition 6.4]{VA} if and only if $ A$ is
uniformly elementally regular at $\xbar$ for all $(\xbar,v)$ with $v\in\ncone{A}(\xbar)$.
Consequently, Clarke regularity implies $(\varepsilon,\delta)$-regularity.

\item\label{t:super-regular equivalence}
The set $ A$ is super-regular at $\xbar\in  A$ \cite[Definition 4.3]{LewisLukeMalick09}
if and only if for any $\varepsilon>0$, there is a $\delta>0$
such that $A$ is elementally regular at $\xbar$ for all
$(a,v)\in \gph{\ncone{A}}$ where
$a\in \Ball_{\delta}(\xbar)$
with constant $\varepsilon$ and neighborhood $\Ball_{\delta}(\xbar)$.
Consequently, super-regularity implies Clarke regularity.

\item\label{t:prox-regular equivalence} If $ A$ is prox-regular at $\xbar$ \cite[Definition 1.1]{PolRockThib00}, then
there exist positive constants $\overline{\varepsilon}$ and $\deltabar$
such that, for any $\varepsilon>0$ and $\delta:=\frac{\varepsilon\deltabar}{\overline{\varepsilon}}$ defined correspondingly,
$ A$ is elementally regular at $\xbar$ for all
$(a,v)\in\gph{\ncone{A}}$ where $a\in \Ball_\delta(\xbar)$ with constant $\varepsilon$ and neighborhood $\Ball_\delta(\xbar)$.
Consequently, prox-regularity implies super-regularity.

\item\label{t: convex set regularity} If $ A$ is convex then it is
elementally regular at all $x \in A$ for all
$(a,v)\in \gph{\ncone{A}}$ with constant $\varepsilon=0$
and the neighborhood $\Ebb$ for both $x$ and $v$.
\end{enumerate}
\end{propn}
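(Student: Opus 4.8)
The plan is to handle each of \ref{t:Hoelder char}--\ref{t: convex set regularity} as a definition-chase, following the route of \cite[Proposition 4]{KruLukNgu16}: write out the cited notion of regularity in full and line it up, term by term, with the defining inequality \eqref{e:geom subreg} of Definition~\ref{d:set regularity}. The ``consequently \dots\ implies \dots'' clauses attached to \ref{t:L-eps-del-subreg char}--\ref{t:prox-regular equivalence} then come for free, by transitivity of these equivalences together with two trivial observations: a condition holding for \emph{every} $\varepsilon>0$ (with a correspondingly shrinking neighborhood) holds in particular for one fixed $\varepsilon$; and elemental regularity is, by Definition~\ref{d:set regularity}\ref{d:geom reg}, elemental subregularity relative to $\Lambda=\Omega$ with an extra quantification over nearby normals, hence it implies the ``subregular'' versions. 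A reduction used repeatedly: whenever $\Lambda\subseteq\Omega$ one has $x^+=x$ for $x\in\Lambda$ (the projection of a point of a closed set onto that set is the point itself), so \eqref{e:geom subreg} collapses to the angle condition $\ip{\vbar}{x-\ybar}\le\varepsilon\norm{\vbar}^{1+\sigma}\norm{x-\ybar}$ --- which is exactly the form in which Clarke-, super-, prox-, and $(\varepsilon,\delta)$-regularity are usually phrased. One only needs to stay alert to whether the cited definition uses $\pncone{\cdot}$ or $\ncone{\cdot}$; since the latter is the outer limit of the former, this affects the statement but not the argument.

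I would clear the straightforward items first. For \ref{t: convex set regularity}, use the two defining inequalities of a convex $A$: $\ip{x-x^+}{a-x^+}\le 0$ for all $a\in A$ when $x^+=P_A(x)$, and $\ip{\vbar}{a-\ybar}\le 0$ for all $a\in A$ when $(\ybar,\vbar)\in\gph\ncone{A}$. Since here $\Lambda=\Omega=A$ forces $x^+=x$, the left side of \eqref{e:geom subreg} is just $\ip{\vbar}{x-\ybar}\le 0$, giving elemental regularity at every $\xbar\in A$ with $\varepsilon=0$ and $U=V=\Ebb$. For \ref{t:L-eps-del-subreg char} and \ref{t:L-eps-del-regular equivalence} ($(\varepsilon,\delta)$-subregularity relative to $B$, and $(\Ebb,\varepsilon,\delta)$-regularity), once notation is aligned each cited definition \emph{is} \eqref{e:geom subreg} with $\sigma=0$, $U=V=\Ball_\delta(\xbar)$ and the indicated $\Lambda$; for the ``uniform'' notions \ref{t:Clarke char} (Clarke) and \ref{t:super-regular equivalence} (super-regularity) one additionally quantifies over $\varepsilon>0$ and shrinks $\delta$, which is precisely their stated form. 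For \ref{t:prox-regular equivalence} there is one extra move: convert the quadratic prox-regularity estimate $\ip{\vbar}{x-\ybar}\le\frac{r}{2}\norm{x-\ybar}^2$ (valid for small $\norm{\vbar}$ on a ball of radius $\deltabar$) into the order-$0$ angle estimate by bounding $\norm{x-\ybar}\le\delta$ on a ball of radius $\delta=\varepsilon\deltabar/\overline{\varepsilon}$ and absorbing $\frac{r}{2}\delta$ into the constant --- a routine normalization.

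The item that needs real work is \ref{t:Hoelder char}. Noll--Rondepierre's $\sigma$-H\"older regularity relative to $B$ is stated \emph{geometrically} --- the projection of $a+v$ returns inside the ball $U(a,v)=\Ball_{(1+\varepsilon^2)\norm{v}}(a+v)$ around $\xbar$, i.e.\ $\xbar\in\intr U(a,v)$ --- rather than as an inner-product inequality. So two things have to be done simultaneously: (1) use the standing hypothesis ``$\xbar\in\intr U(a,v)$ for all $(a,v)\in V$'' to legitimise $U(a,v)$ as the neighborhood $U$ of $\xbar$ appearing in Definition~\ref{d:set regularity}, with $\Lambda=A\cap P_B^{-1}(a+v)$, $\ybar=a$, $\vbar=v$; and (2) do the algebra --- expand the squared distance $\norm{\xbar-(a+v)}^2$, write $a+v$ as its projection onto $\Omega$ plus the residual (which is the quantity $\vbar-(x-x^+)$ of \eqref{e:geom subreg}), and isolate the cross term --- that turns the ball condition into \eqref{e:geom subreg}, with the relation $\varepsilon=\sqrt{c}$ between the two constants and the exponent $1+\sigma$ both emerging from this conversion. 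I expect the main obstacle to be exactly this book-keeping: keeping straight which point is the reference $\xbar$, which is the ball centre $a+v$, which is the projection, where $\sigma$ sits, and carrying the standing hypothesis through \emph{both} directions of the ``if and only if''. Once the dictionary is pinned down no further estimates are needed --- the whole content is in setting it up correctly.
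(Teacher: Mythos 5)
Your plan is sound, but note that the paper itself offers no proof of this proposition: the sentence introducing it says these equivalences were ``first established in \cite[Proposition 4]{KruLukNgu16}'', and the authors simply import them, so the only comparison available is with that reference, whose argument is exactly the definition-unwinding you describe. Your two organizing observations --- that elemental regularity is elemental subregularity relative to $\Lambda=\Omega$ with an extra quantification over nearby normals, and that $\Lambda\subseteq\Omega$ forces $x^+=x$ so that \eqref{e:geom subreg} collapses to the angle condition $\ip{\vbar}{x-\ybar}\le\varepsilon\norm{\vbar}^{1+\sigma}\norm{x-\ybar}$ --- are precisely the right reductions, and they dispose of \ref{t:L-eps-del-subreg char}--\ref{t: convex set regularity} as you say (for \ref{t:Clarke char} one also needs a small compactness argument on $\ncone{A}(\xbar)\cap\Sbb$ to make the $\delta$ uniform over unit normals, which is where finite dimensionality enters). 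The one inaccuracy is your reading of \ref{t:Hoelder char}: the Noll--Rondepierre definition of $\sigma$-H\"older regularity is not purely geometric, but asserts the emptiness of $A\cap P_B^{-1}(b)\cap \Ball_{(1+c)\norm{b-b_A}}(b)$ intersected with the set of points $z$ violating an angle inequality already of the form $\ip{b-b_A}{z-b_A}\le\sqrt{c}\,\norm{b-b_A}^{1+\sigma}\norm{z-b_A}$. Once this is written out, the equivalence with elemental subregularity relative to $\Lambda=A\cap P_B^{-1}(a+v)$ on $U(a,v)$ is an immediate relabelling ($\ybar=a=b_A$, $\vbar=v=b-b_A$, $x^+=x$ since $\Lambda\subseteq A$), with $\varepsilon=\sqrt{c}$ read off directly; no expansion of squared distances is needed, and the standing hypothesis \eqref{e:xbar neighborhood} serves only to make $U(a,v)$ a legitimate neighborhood of $\xbar$ in Definition~\ref{d:set regularity}, as you correctly note. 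So your route would succeed, just with less algebra in item \ref{t:Hoelder char} than you anticipate.
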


The following relations 
reveal a similarity to almost firm-nonexpansiveness of the projector onto elementally subregular sets 
on the one hand, and almost nonexpansiveness of the same projector on the other.  
\begin{propn}[characterizations of elemental subregularity]\label{t:geom subreg equivalences}
$~$
\begin{enumerate}[(i)]
\item\label{t:geom subreg equivalences i} 
A nonempty set $\Omega\subset\Euclid$ is elementally subregular at $\xbar$ relative to $\Lambda$ for $(y,v)\in\gph\paren{\pncone{\Omega}}$
where $y\in P_\Omega(y+v)$ if and only if there is a neighborhood $U$ of $\xbar$ together with a constant $\varepsilon\ge 0$ such that
\begin{equation}\label{e:geom subreg2}
\norm{x-y}^2\leq\varepsilon\norm{\paren{y'-y} - \paren{x'-x}}\norm{x-y}+
\ip{x'-y'}{x-y}
\end{equation}
holds with $y'=y+v$ whenever $x'\in   U\cap \Lambda$ and $x\in P_\Omega x'$.
\item\label{t:geom subreg equivalences ii}
Let the nonempty set $\Omega\subset\Euclid$ be elementally subregular at $\xbar$ relative to $\Lambda$ 
for $(y,v)\in\gph\paren{\pncone{\Omega}}$
where $y\in P_\Omega(y+v)$ with the constant $\varepsilon\ge 0$ for the neighborhood $U$ of $\xbar$. 
Then
\begin{equation}
\norm{x-y}\leq\varepsilon\norm{\paren{y'-y}-\paren{x'-x}}+\norm{x'-y'}
\end{equation}%
holds with $y'=y+v$ whenever $x'\in   U\cap \Lambda$ and $x\in P_\Omega x'$.
\end{enumerate}
\end{propn}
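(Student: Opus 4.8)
The plan is to recognize both statements as elementary reformulations of the defining inequality \eqref{e:geom subreg} with $\sigma=0$: part \ref{t:geom subreg equivalences i} is a purely algebraic rewriting of \eqref{e:geom subreg}, and part \ref{t:geom subreg equivalences ii} then simply drops one of the two terms on the right-hand side of \eqref{e:geom subreg2} by the Cauchy--Schwarz inequality.

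First I would fix the correspondence between the data appearing in \eqref{e:geom subreg} and the notation of \eqref{e:geom subreg2}. Since $(y,v)\in\gph\paren{\pncone{\Omega}}\subset\gph\paren{\ncone{\Omega}}$, it is legitimate to speak of elemental subregularity of $\Omega$ at $\xbar$ relative to $\Lambda$ for $(y,v)$. In \eqref{e:geom subreg}, $x'$ plays the role of the point being projected, $x\in P_\Omega x'$ plays the role of its projection $x^+$, $y$ plays the role of $\ybar$, and $v=y'-y$ plays the role of $\vbar$; then $\vbar-(x-x^+)$ becomes $(y'-y)-(x'-x)$ and $x^+-\ybar$ becomes $x-y$, so \eqref{e:geom subreg} with $\sigma=0$ reads
\[
\ip{(y'-y)-(x'-x)}{x-y}\le\varepsilon\norm{(y'-y)-(x'-x)}\norm{x-y}
\]
for all $x'\in U\cap\Lambda$ and $x\in P_\Omega x'$. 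The only identity needed is $(x-y)-(x'-y')=(y'-y)-(x'-x)$, which gives $\ip{(y'-y)-(x'-x)}{x-y}=\norm{x-y}^2-\ip{x'-y'}{x-y}$; substituting this into the displayed inequality and transposing the inner-product term turns it into \eqref{e:geom subreg2}, and the same chain run backwards gives the converse, with the same neighborhood $U$ and the same constant $\varepsilon$. This settles \ref{t:geom subreg equivalences i}.

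For \ref{t:geom subreg equivalences ii} I would start from \eqref{e:geom subreg2}, which is available by \ref{t:geom subreg equivalences i}, bound $\ip{x'-y'}{x-y}\le\norm{x'-y'}\norm{x-y}$ by Cauchy--Schwarz, and obtain
\[
\norm{x-y}^2\le\paren{\varepsilon\norm{(y'-y)-(x'-x)}+\norm{x'-y'}}\norm{x-y}.
\]
If $x=y$ the asserted inequality is trivial, since its right-hand side is nonnegative; otherwise divide through by $\norm{x-y}>0$. I do not expect a genuine obstacle here: the entire content of the proposition is the bookkeeping that recasts \eqref{e:geom subreg} in the firm-nonexpansiveness-flavoured form \eqref{e:geom subreg2}, and the only place calling for a little care is verifying that the roles of $(\ybar,\vbar)$ and of the point being projected are assigned consistently and that the neighborhood and the constant are preserved in both directions of the equivalence.
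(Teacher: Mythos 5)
Your proposal is correct and follows exactly the paper's (very terse) argument: part (i) is the identity $(x-y)-(x'-y')=(y'-y)-(x'-x)$ substituted into the defining inequality \eqref{e:geom subreg} with $\sigma=0$ and the roles of the projected point and its projection assigned as you describe, and part (ii) is Cauchy--Schwarz applied to the inner product in \eqref{e:geom subreg2} followed by division by $\norm{x-y}$. Your write-up merely makes explicit the bookkeeping the paper leaves implicit, including the harmless case $x=y$.
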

\begin{proof}
 \ref{t:geom subreg equivalences i}:  This is just a rearrangement of the inequality in \eqref{e:geom subreg}. 
 \ref{t:geom subreg equivalences ii}:  Follows by applying the Cauchy-Schwarz inequality to the inner product on the right hand side of \eqref{e:geom subreg2}.
\end{proof}

The next theorem is an update of \cite[Theorem 2.14]{HesseLuke13} to the current terminology.  It establishes
the connection between elemental subregularity of a set and almost nonexpansiveness/averaging of 
the projector onto that set.  Since the cyclic projections algorithm applied to inconsistent feasibility problems involves the 
properties of the projectors at points that are {\em outside} the sets, we show the how the properties depend on whether the reference points 
are inside or outside of the sets.  
The theorem uses the symbol $\Lambda$ to indicate subsets of the sets and 
the symbol $\Lambda'$ to indicate points on some neighborhood whose projection lies in $\Lambda$.  Later, the sets  $\Lambda'$
will be specialized in the context of cyclic projections to sets of points $S_j$ whose projections lie in $\Omega_j$.  One thing to note 
in the theorem below is that the almost nonexpansive/averaging property degrades rapidly as the reference points 
move away from the sets.  Our estimate is severe and could be sharpened somewhat, but it serves our purposes. 
\begin{thm}[projectors and reflectors onto elementally subregular sets]
\label{t:subreg proj-ref}
Let $\Omega\subset\Ebb$ be nonempty closed, and let $U$ be a neighborhood of $\xbar\in \Omega$.  
Let $\Lambda\subset\Omega\cap U$ and $\Lambda'\equiv P_{\Omega}^{-1}(\Lambda)\cap U$.  
If $\Omega$ is elementally sub\-reg\-ul\-ar at $\xbar$ 
relative to $\Lambda'$ for each 
\[
(x,v)\in V\equiv\set{(z, w)\in\gph\pncone{\Omega}}{ z+w\in U\und z\in P_\Omega(z+w)} 
\]
 with constant $\varepsilon$ on the neighborhood $U$, then the following hold.
\begin{enumerate}[(i)]
\item\label{t:subreg proj-ref1}
The projector $P_\Omega$ is pointwise almost nonexpansive at each $y\in \Lambda$ on 
$U$ 
with violation $\varepsilon'\equiv 2\varepsilon+\varepsilon^2$. That is, at each $y\in \Lambda$
\[
\norm{x -y}\leq \sqrt{1+\varepsilon'}\norm{x'-y}\quad\forall x'\in U,\, x\in P_\Omega x'.
\]

\item\label{t:subreg proj-ref1b}
Let $\varepsilon\in[0,1)$.  The projector $P_\Omega$ is pointwise almost nonexpansive at each 
$y'\in \Lambda'$ with violation $\varepsilontilde$ on 
$U$
for $\varepsilontilde\equiv
4\varepsilon/\paren{1-\varepsilon}^2$. That is,
at each $y'\in \Lambda'$
\[
\norm{x -y}\leq \frac{1+\varepsilon}{1-\varepsilon}\norm{x'-y'}\quad\forall x'\in U,\, x\in P_\Omega x',y\in P_\Omega y'.
\]
\item\label{t:subreg proj-ref2} 
The projector $P_\Omega$ is pointwise almost firmly nonexpansive at each $y\in \Lambda$ 
with violation $\varepsilon'_2\equiv2\varepsilon+2\varepsilon^2$ on $U$. That is,
at each $y\in \Lambda$
\[
\norm{x -y}^2+\norm{x'-x }^2\leq (1+\varepsilon'_2)\norm{x'-y}^2 \quad\forall x'\in U,\, x\in P_\Omega x'.  
\]
\item\label{t:subreg proj-ref2b}
Let $\varepsilon\in[0,1)$.  The projector $P_\Omega$ is pointwise almost firmly nonexpansive at 
each $y'\in \Lambda'$ 
with violation 
$\varepsilontilde_2 \equiv 4\varepsilon\paren{1+\varepsilon}/\paren{1-\varepsilon}^2$
  on $U$. That is,
at each $y'\in \Lambda'$
\[
\norm{x -y}^2+\norm{(x'-x)-(y'-y) }^2\leq \paren{1+\varepsilontilde_2}\norm{x'-y'}^2\quad\forall x'\in U,\, x\in P_\Omega x',y\in P_\Omega y'.
\]
\item\label{t:subreg proj-ref3} 
The reflector $R_\Omega$ is pointwise almost nonexpansive at each $y\in \Lambda$ 
(respectively, $y'\in\Lambda'$) 
with violation $\varepsilon'_3\equiv 4\varepsilon+4\varepsilon^2$ 
(respectively, 
$\varepsilontilde_3\equiv 
8\varepsilon\paren{1+\varepsilon}/\paren{1-\varepsilon}^2$
) on $U$; that is,
for all $y\in \Lambda$ (respectively, $y'\in\Lambda'$)
\begin{align*}
&\norm{x-y}\leq\sqrt{1+\varepsilon'_3} \norm{x'-y}
\quad\forall x'\in U,\, x\in P_\Omega x'
\\
(\mbox{respectively},\; &\norm{x-y}\leq\sqrt{1+\varepsilontilde_3} \norm{x'-y'}
\quad\forall x'\in U,\, x\in P_\Omega x',\, y\in P_\Omega y'.)
\end{align*}
\end{enumerate}
\end{thm}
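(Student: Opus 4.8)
\textit{The plan.} The theorem is a systematic translation: every instance of the elemental-subregularity inequality \eqref{e:geom subreg} --- used in whichever of the two equivalent forms of Proposition~\ref{t:geom subreg equivalences} is convenient --- is pushed through an elementary Hilbert-space identity to produce one of the five (firm) nonexpansiveness estimates, the violation constant being read off at the end by squaring. The device that makes the translation go through uniformly is the observation that, for \emph{any} $x'\in U$ and \emph{any} $x\in P_\Omega x'$, the proximal-normal pair $(x,x'-x)$ belongs to $V$ (one has $x'-x\in\pncone{\Omega}(x)$, $(x'-x)+x=x'\in U$, and $x\in P_\Omega(x')$); hence the hypothesis supplies elemental subregularity of $\Omega$ at $\xbar$ relative to $\Lambda'$ for the pair $(x,x'-x)$, with constant $\varepsilon$ on $U$. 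Moreover every reference point used below lies in $\Lambda'$: if $y\in\Lambda$ then $y\in\Lambda\subset P_\Omega^{-1}(\Lambda)\cap U=\Lambda'$, and if $y'\in\Lambda'$ then one may pick $y\in P_\Omega(y')\cap\Lambda\subset\Lambda'$; so in \eqref{e:geom subreg} applied at the pair $(x,x'-x)$ these reference points are admissible as the argument of the inequality (note $P_\Omega y=\{y\}$ since $y\in\Omega$).

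For \ref{t:subreg proj-ref1} and \ref{t:subreg proj-ref1b} I use Proposition~\ref{t:geom subreg equivalences}\ref{t:geom subreg equivalences ii} at the pair $(x,x'-x)$ with argument $y$, respectively $y'$. For \ref{t:subreg proj-ref1} this reads $\norm{x-y}\le\varepsilon\norm{x'-x}+\norm{x'-y}$, and since $\norm{x'-x}=\dist(x',\Omega)\le\norm{x'-y}$ (because $y\in\Omega$) one gets $\norm{x-y}\le(1+\varepsilon)\norm{x'-y}$, hence $\norm{x-y}^2\le(1+2\varepsilon+\varepsilon^2)\norm{x'-y}^2$, i.e.\ violation $\varepsilon'=2\varepsilon+\varepsilon^2$. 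For \ref{t:subreg proj-ref1b} the same proposition (with $y\in P_\Omega y'$) gives $\norm{x-y}\le\varepsilon\norm{(x'-x)-(y'-y)}+\norm{x'-y'}$; rewriting $(x'-x)-(y'-y)=(x'-y')-(x-y)$ and bounding its norm by $\norm{x'-y'}+\norm{x-y}$ produces $\norm{x-y}\le\varepsilon\big(\norm{x'-y'}+\norm{x-y}\big)+\norm{x'-y'}$, so (this is where $\varepsilon<1$ enters) $\norm{x-y}\le\tfrac{1+\varepsilon}{1-\varepsilon}\norm{x'-y'}$, and squaring gives $\varepsilontilde=\tfrac{4\varepsilon}{(1-\varepsilon)^2}$.

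For \ref{t:subreg proj-ref2} and \ref{t:subreg proj-ref2b} I use the ``angle'' form, Proposition~\ref{t:geom subreg equivalences}\ref{t:geom subreg equivalences i}, again at the pair $(x,x'-x)$ with argument $y$, respectively $y'$. Expanding the inner product on the right-hand side of \eqref{e:geom subreg2} and cancelling the term $\norm{x-y}^2$ leaves, in the first case, $\ip{x'-x}{x-y}\ge-\varepsilon\norm{x'-x}\norm{x-y}$, and in the second $\ip{(x'-x)-(y'-y)}{x-y}\ge-\varepsilon\norm{(x'-x)-(y'-y)}\norm{x-y}$. Substituting the first into the identity $\norm{x-y}^2+\norm{x'-x}^2=\norm{x'-y}^2-2\ip{x'-x}{x-y}$, and bounding $\norm{x'-x}\le\norm{x'-y}$ together with $\norm{x-y}\le(1+\varepsilon)\norm{x'-y}$ (the latter from \ref{t:subreg proj-ref1}), yields $\norm{x-y}^2+\norm{x'-x}^2\le(1+2\varepsilon+2\varepsilon^2)\norm{x'-y}^2$, i.e.\ $\varepsilon'_2=2\varepsilon+2\varepsilon^2$ (equivalently this is the $\alpha=\tfrac12$ case of Proposition~\ref{t:average char}\ref{t:average char iii} with $y^+=y$). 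For \ref{t:subreg proj-ref2b} one substitutes the second inequality into the identity $\norm{x'-y'}^2-\big(\norm{x-y}^2+\norm{(x'-x)-(y'-y)}^2\big)=2\ip{(x'-x)-(y'-y)}{x-y}$ and feeds in the bounds $\norm{x-y}\le\tfrac{1+\varepsilon}{1-\varepsilon}\norm{x'-y'}$ and $\norm{(x'-x)-(y'-y)}=\norm{(x'-y')-(x-y)}\le\tfrac{2}{1-\varepsilon}\norm{x'-y'}$ from \ref{t:subreg proj-ref1b}, which caps the defect by $\tfrac{4\varepsilon(1+\varepsilon)}{(1-\varepsilon)^2}\norm{x'-y'}^2$, i.e.\ $\varepsilontilde_2=\tfrac{4\varepsilon(1+\varepsilon)}{(1-\varepsilon)^2}$. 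Finally \ref{t:subreg proj-ref3} is immediate from Proposition~\ref{t:firmlynonexpansive}, equivalence \ref{t:averaged1}$\iff$\ref{t:averaged2}: since $P_\Omega$ is pointwise almost firmly nonexpansive at each $y\in\Lambda$ (resp.\ $y'\in\Lambda'$) by \ref{t:subreg proj-ref2} (resp.\ \ref{t:subreg proj-ref2b}), the reflector $R_\Omega=2P_\Omega-\Id$ is pointwise almost nonexpansive there with twice the violation, namely $\varepsilon'_3=2\varepsilon'_2=4\varepsilon+4\varepsilon^2$ (resp.\ $\varepsilontilde_3=2\varepsilontilde_2=8\varepsilon(1+\varepsilon)/(1-\varepsilon)^2$).

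The only step that is not bookkeeping is \ref{t:subreg proj-ref2b}: one must expand and regroup the cross terms in $\norm{x'-y'}^2$ so that they collapse to the single inner product $\ip{(x'-x)-(y'-y)}{x-y}$ that the subregularity inequality controls, and then check that inserting the already-proved norm estimates gives precisely the advertised $\varepsilontilde_2$. I also expect the mild care needed throughout to be in keeping straight that the normal pair is anchored at $x=P_\Omega x'$ (so that $x'$ may roam over all of $U$) while the argument of \eqref{e:geom subreg} must sit in $\Lambda'$, and in noting that the hypothesis $\varepsilon\in[0,1)$ in \ref{t:subreg proj-ref1b}, \ref{t:subreg proj-ref2b}, \ref{t:subreg proj-ref3} is exactly what licenses the divisions by $1-\varepsilon$.
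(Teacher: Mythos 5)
Your proof is correct and follows essentially the same route as the paper's: the key observation that $(x,x'-x)\in V$ for every $x'\in U$ and $x\in P_\Omega x'$, the inclusion $\Lambda\subset\Lambda'$, the chaining of \ref{t:subreg proj-ref1} into \ref{t:subreg proj-ref2} and of \ref{t:subreg proj-ref1b} into \ref{t:subreg proj-ref2b} via the triangle inequality, and the doubling of the violation for the reflector through Proposition~\ref{t:firmlynonexpansive} all match the paper, with only cosmetic differences (you invoke Proposition~\ref{t:geom subreg equivalences} and polarization identities where the paper expands the norms directly). One tiny remark: for $y'\in\Lambda'$ you do not need to ``pick $y\in P_\Omega(y')\cap\Lambda$'' --- the admissible test point in \eqref{e:geom subreg} is $y'$ itself, and the definition already quantifies over \emph{all} of its projections $y\in P_\Omega(y')$, which is what your computations in \ref{t:subreg proj-ref1b} and \ref{t:subreg proj-ref2b} actually use.
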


\begin{proof}
First, some general observations about the assumptions.  
The projector is nonempty since $\Omega$ is closed.  Note also that, since $\Lambda\subset\Omega$, 
$P_\Omega y=y$ for all $y\in \Lambda$.  
Since $\Lambda\subset\Lambda'$, $\Omega$ is elementally sub\-reg\-ul\-ar at $\xbar$ 
relative to $\Lambda$ for each 
$
(x,v)\in V
$
 with constant $\varepsilon$ on the neighborhood $U$ of $\xbar$, though the constant $\varepsilon$ may not be optimal
 for $\Lambda$ even if it is optimal for $\Lambda'$. 
Finally, for all $x'\in U$ it holds that $(x, x'-x)\in V$ for all 
$x\in P_\Omega(x')$.  To see this, take any $x'\in U$ and any $x\in P_\Omega(x')$.  Then 
$v=x'-x\in \pncone{\Omega}(x)$ and, by definition $(x,v)\in V$. 

\ref{t:subreg proj-ref1}:   By the 
Cauchy-Schwarz inequality
 \begin{eqnarray} 
\norm{x -y}^2
&=& \langle x'- y,x - y\rangle+\langle x -x',x - y\rangle\nonumber\\
&\leq& \norm{x'- y}\norm{x - y}+\langle x' -x,y-x\rangle.\label{e:inter1}
\end{eqnarray}
Now with $v=x'-x\in\pncone{\Omega}(x)$ such that $x'=x+v\in U$ one has $(x,v)\in V$ and by 
the  definition of elemental subregularity of $\Omega$ at $\xbar$ relative to  $\Lambda\subset\Omega$ 
for each 
$
(x,v)\in V
$
 with constant  $\varepsilon$ on the neighborhood $U$ of $\xbar$, the inequality 
$\langle x'-x,~y-x\rangle\leq \varepsilon \norm{x'-x }\norm{y-x}$ holds
for all $y\in \Lambda = U\cap \Lambda$.
But $\norm{x'-x }\leq \norm{x'-y}$ since $x\in P_\Omega(x')$ and $y\in\Omega$, so 
in fact the inequality 
$\langle x' -x,~y-x\rangle\leq \varepsilon \norm{x'-y }\norm{y-x}$ holds
whenever $y\in \Lambda$.
Combining this with \eqref{e:inter1}  yields, 
for all
$(x,x'-x)\in V$ and $y\in \Lambda$,
\begin{align}\label{e:inter2}
\norm{x - y} \leq (1+\varepsilon)\norm{x'- y}
= \sqrt{1+(2\varepsilon+\varepsilon^2)}\norm{x'- y}.
\end{align} 
Equivalently, since for all $x'\in U$ it holds that $(x, x'-x)\in V$ for all $x\in P_\Omega(x')$,  
\eqref{e:inter2} holds at each $y\in \Lambda$ 
for all $x\in P_\Omega(x')$ whenever $x'\in U$, that is, $P_\Omega$ is almost nonexpansive at 
each $y\in\Lambda\subset U$ with violation $(2\varepsilon+\varepsilon^2)$ on $U$
as claimed. \hfill$\triangle$

\ref{t:subreg proj-ref1b}:  Since any point $(x,x'-x)\in V$ satisfies $x'-x\in\pncone{\Omega}(x)$ and 
$x\in P_\Omega(x')$, Proposition \ref{t:geom subreg equivalences} \ref{t:geom subreg equivalences ii} applies with $\Lambda$ replaced by $\Lambda'$, 
namely  
\[
 \norm{y-x}\leq \varepsilon\norm{(y'-y)-(x'-x)}+\norm{x'-y'}
\]
for all $y'\in U\cap \Lambda'$ and for every $y\in P_\Omega(y')$.
The triangle inequality applied to $\norm{(y'-y)-(x'-x)}$ then establishes the result.  
\hfill$\triangle$

\ref{t:subreg proj-ref2}:
Expanding and rearranging the norm yields, for all
 $y\in U\cap\Lambda$,
\begin{eqnarray}
&&\!\!\!\!\!\!\norm{x - y}^2+\norm{x'-x }^2\nonumber\\
&&=~\norm{x - y}^2+\norm{x'- y+ y -x }^2\nonumber\\
&&=~\norm{x - y}^2+\norm{x'- y}^2+2\langle x'- y,~ y-x \rangle+\norm{x - y}^2\nonumber\\
&&=2\norm{x - y}^2+\norm{x'- y}^2+ 2\langle x - y, y-x \rangle +2\langle x'-x , y-x  \rangle\nonumber\\
&&\leq~ \norm{x'- y}^2+2\varepsilon\norm{x'-x }\norm{x-y}\label{eq:P:prf1}
\end{eqnarray}
for each $(x,x'-x)\in V$
where the last inequality follows from the definition of elemental subregularity of $\Omega$
at $\xbar$ relative to $\Lambda$ for $(x,x'-x)\in V$. 
As in Part~\ref{t:subreg proj-ref1}, since $y\in\Omega$ and $x\in P_\Omega(x')$ it holds that 
$\norm{x'-x }\leq \norm{x'-y}$. 
 Combining \eqref{eq:P:prf1} and part \ref{t:subreg proj-ref1} yields, at each $y\in \Lambda$
\begin{equation}\label{e:inter3} 
\norm{x - y}^2+\norm{x'-x}^2
\leq
\left(1+2\varepsilon\left(1+\varepsilon\right)\right)\norm{x'- y}^2
\end{equation} 
for all $(x,x-x')\in V$.  Again, since for all $x'\in U$ it holds that $(x, x'-x)\in V$ for all $x\in P_\Omega(x')$,  
\eqref{e:inter3} holds at each $y\in \Lambda$ 
for all $x\in P_\Omega(x')$ whenever $x'\in U$.  By Proposition \ref{t:average char}\ref{t:average char iii} 
with $\alpha=1/2$ and $y^+=P_\Omega(y)=y$ it follows that $P_\Omega$ is almost firmly nonexpansive at 
each $y\in\Lambda\subset U$ with violation $(2\varepsilon+2\varepsilon^2)$ on $U$
as claimed. 
\hfill$\triangle$

\ref{t:subreg proj-ref2b}:
As in part \ref{t:subreg proj-ref1b}, Proposition \ref{t:geom subreg equivalences} 
applies with $\Lambda$ replaced by $\Lambda'$.  Proceeding as in part \ref{t:subreg proj-ref2}
\begin{align}
\norm{x - y}^2+\norm{(x'-x)-(y'-y) }^2 &= 2\norm{x - y}^2+\norm{x'- y'}^2+
  2\langle x'- y',~ y-x \rangle\nonumber\\
&\leq \norm{x'- y'}^2+2\varepsilon \norm{(x'-x)-(y'-y) }\norm{x-y}, 
\end{align}
where, by elemental subregularity of $\Omega$ at $\xbar$ relative to $\Lambda'$ for $(x,x'-x)\in V$ and 
\eqref{e:geom subreg2} of Proposition \ref{t:geom subreg equivalences}, 
the  last inequality holds for each $(x,x'-x)\in V$ for every 
$y'\in U\cap\Lambda'$ for all $y\in P_\Omega(y')$.  This together with the triangle inequality yields
\begin{align}
\norm{x - y}^2+\norm{(x'-x)-(y'-y) }^2
\leq \norm{x'- y'}^2+2\varepsilon \norm{x-y}\paren{\norm{x'-y'}+\norm{x-y}}. 
\label{e:2b 2}
\end{align}
Part \ref{t:subreg proj-ref1b} and \eqref{e:2b 2} then give
\begin{align}
\norm{x - y}^2+\norm{(x'-x)-(y'-y) }^2
\leq \paren{1+4\varepsilon\frac{1+\varepsilon}{\paren{1-\varepsilon}^2}}\norm{x'-y'}^2 
\label{e:2b 3}
\end{align}
for all $(x,x'-x)\in V$ and for all $y\in P_\Omega(y')$ at each 
$y'\in U\cap\Lambda'$ .  
Again, since for all $x'\in U$ it holds that $(x, x'-x)\in V$ for all $x\in P_\Omega(x')$,  
\eqref{e:2b 3} holds at each $y'\in \Lambda'=U\cap \Lambda'$ 
for all $x\in P_\Omega(x')$ whenever $x'\in U$.  By Proposition \ref{t:average char}\ref{t:average char iii} 
with $\alpha=1/2$ and $y^+$ replaced by $y\in P_\Omega(y')$ it follows that $P_\Omega$ is almost firmly nonexpansive at 
each $y'\in\Lambda'\subset U$ with violation
$4\varepsilon(1+\varepsilon)/\paren{1-\varepsilon}^2$
on $U$ as claimed. 
\hfill$\triangle$

\ref{t:subreg proj-ref3}: By part \ref{t:subreg proj-ref2} (respectively, part \ref{t:subreg proj-ref2b}) 
the projector is pointwise almost firmly nonexpansive
at each $y\in \Lambda$ (respectively, $y'\in\Lambda'$) with violation 
$(2\varepsilon+2\varepsilon^2)$ (respectively, $4\varepsilon(1+\varepsilon)/\paren{1-\varepsilon}^2$) on $U$,
and so, by Proposition \ref{t:firmlynonexpansive}\ref{t:averaged2}, $R_\Omega=2 P_\Omega -\Id$ is 
pointwise almost nonexpansive at each $y\in\Lambda$ (respectively, $y'\in\Lambda'$) 
with violation  $(4\varepsilon+4\varepsilon^2)$ 
(respectively, $8\varepsilon(1+\varepsilon)/\paren{1-\varepsilon}^2$)
on $U$.
This completes the proof.
\end{proof}

\subsubsection{Subtransversal collections of sets}
Elemental regularity of sets has been shown to be the source of the almost averaging 
property of the corresponding projectors.  We show in this section that metric subregularity 
of the composite/averaged fixed point mapping is a consequence of how the 
individual sets align with each other.  This impinges on a literature rich in terminology and 
competing notions of stability that have been energetically promoted recently in the context of 
consistent feasibility (see \cite{KruLukNgu16} and references therein).  Our placement of 
metric subregularity as the central organizing principle allows us to extend these notions beyond 
consistent feasibility to {\em inconsistent feasibility}.   Before we can translate the dialect of 
set feasibility into the language of metric subregularity, we need to first extend one of the 
main concepts describing the regularity of collections of sets to collections that 
don't necessarily intersect.  The idea behind the following definition stems from the equivalence 
between metric subregularity of an appropriate set-valued mapping on the product space and subtransversality 
of sets at common points \cite[Theorem 3]{KruLukNgu16}.  The trick to extending this to points 
that do not belong to all the sets is to define the correct set-valued mapping.   
\begin{defn}[subtransversal collections of sets]\label{d:(s)lf}
Let $\klam{\Omega_1, \Omega_2, \dots,\Omega_m}$ be a collection of nonempty closed subsets of $\Ebb$ 
and define $\mmap{\Psi}{\Ebb^m}{\Ebb^m}$ by 
$\Psi(x)\equiv P_{\Omega}\paren{\Pi x}-\Pi x$ where 
$\Omega\equiv\Omega_1\times\Omega_2\times\cdots\times\Omega_m$, 
the projection $P_\Omega$ is with respect to the Euclidean norm on $\Ebb^m$ and 
$\Pi:x=(x_1, x_2, \dots, x_m)\mapsto (x_2, \dots, x_m, x_1)$ is the permutation mapping on the 
product space $\Ebb^m$ for $x_j\in \Ebb$ $(j=1,2,\dots, m)$.
Let $\xbar = (\xbar_1, \xbar_2, \dots, \xbar_m)\in \Ebb^m$ and
 $\ybar\in \Psi(\xbar)$.
\begin{enumerate}[(i)]
 \item\label{d:lf} The collection of sets is said to be {\em subtransversal with gauge $\mu$ relative to $\Lambda\subset \Ebb^m$
at $\xbar$ for $\ybar$}
if $\Psi$ is metrically subregular at $\xbar$ for $\ybar$ on some neighborhood $U$ of $\xbar$ 
(metrically regular on $U\times \{\ybar\}$) with gauge $\mu$ relative to 
$\Lambda$.
\item\label{d:transversality} The collection of sets is said to be {\em transversal with gauge $\mu$ relative to $\Lambda\subset \Ebb^m$
at $\xbar$ for $\ybar$}
if $\Psi$ is metrically regular with gauge $\mu$ relative to $\Lambda$
on $U\times V$, for some neighborhoods $U$ of $\xbar$ and $V$ of $\ybar$.
\end{enumerate}
As in Definition \ref{d:(str)metric (sub)reg}, when
$\mu(t)=\kappa t,\,\forall t\in [0,\infty)$,
one
 says ``constant $\kappa$'' instead of ``gauge $\mu(t) =\kappa t$''. When $\Lambda=\Ebb$, the quantifier ``relative to'' is dropped.
\end{defn}
Consistent with the terminology of metric regularity and subregularity, the prefix ``sub'' is meant to indicate the 
pointwise version of the more classical, though restrictive, idea of transversality.  
When the point $\xbar=\paren{\ubar, \cdots,\ubar}$ for $\ubar\in\cap_{j=1}^m\Omega_j$ the following 
characterization of substransversality holds.

\begin{propn}[subtransversality at common points]\label{t:metric characterization}
Let $\Ebb^m$ be endowed with $2$-norm, that is, 
$\norm{(x_1,x_2,\ldots,x_m)}_2= \paren{\sum_{j=1}^m\norm{x_j}^2_{\Ebb}}^{1/2}$. 
A collection $\{\Omega_1, \Omega_2, \dots,\Omega_m\}$ of nonempty closed subsets of $\Ebb$ 
is subtransversal relative to 
$\Lambda\equiv\set{x=(u, u, \dots, u)\in\Ebb^m}{u\in\Ebb}$ at
$\xbar = \paren{\ubar, \cdots,\ubar}$ with $\ubar\in \cap_{j=1}^m\Omega_j$ for 
$\ybar=0$ with gauge $\mu$
if there exists a neighborhood 
$U'$ of $\ubar$ together with a gauge $\mu'$ satisfying $\sqrt{m}\mu'\le \mu$ such that 
\begin{equation}\label{eq:locallinear}
\dist\paren{u, \cap_{j=1}^m \Omega_j}\leq\mu'\paren{\max_{j=1,\dots,m}\dist\paren{u, \Omega_i}}, 
\quad\forall ~u\in  U'. 
\end{equation}
Conversely, if $\{\Omega_1, \Omega_2, \dots,\Omega_m\}$ is subtransversal relative to 
$\Lambda$ at $\xbar$ for $\ybar=0$ with gauge $\mu$, then
\eqref{eq:locallinear} is satisfied with any gauge $\mu'$ for which $\mu(\sqrt{m}t)\le \sqrt{m}\mu'(t)$ for all $t\in [0,\infty)$.
\end{propn}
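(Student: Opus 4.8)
The plan is to translate the relativized, gauge metric subregularity of $\Psi$ at $\xbar$ for $0$ directly into the error bound \eqref{eq:locallinear}, exploiting that $\Omega$ is a product, that the norm on $\Ebb^m$ is the $\ell^2$-norm, and that both the reference point $\xbar$ and the solution set lie on the diagonal $\Lambda$. First I would record the relevant explicit formulas. Because $P_\Omega$ with respect to the $\ell^2$-norm acts coordinatewise on $\Omega=\Omega_1\times\cdots\times\Omega_m$, and because $\Pi$ restricts to the identity on $\Lambda$, a diagonal point $x=(u,u,\dots,u)$ has $\Pi x=x$ and $\Psi(x)=P_\Omega(x)-x=\{(p_1-u,\dots,p_m-u):p_j\in P_{\Omega_j}(u)\}$; since every $p_j\in P_{\Omega_j}(u)$ satisfies $\norm{p_j-u}=\dist(u,\Omega_j)$, it follows that $\dist(0,\Psi(x))=\bigl(\sum_{j=1}^m\dist(u,\Omega_j)^2\bigr)^{1/2}$. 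Similarly $0\in\Psi(x)$ iff $\Pi x\in\Omega$, so $\Psi^{-1}(0)\cap\Lambda=\{(v,\dots,v):v\in\cap_{j=1}^m\Omega_j\}$ and hence $\dist(x,\Psi^{-1}(0)\cap\Lambda)=\sqrt m\,\dist(u,\cap_{j=1}^m\Omega_j)$. Under the diagonal embedding $u\mapsto(u,\dots,u)$ a neighborhood $U$ of $\xbar$ in $\Ebb^m$ has diagonal slice $U'=\{u:(u,\dots,u)\in U\}$, a neighborhood of $\ubar$, and conversely the ball around $\xbar$ of radius $\sqrt m\,r$ has diagonal slice the ball around $\ubar$ of radius $r$; therefore metric subregularity of $\Psi$ for $0$ on $U$ relative to $\Lambda$ with gauge $\mu$ is precisely the inequality $\sqrt m\,\dist(u,\cap_j\Omega_j)\le\mu\bigl((\sum_j\dist(u,\Omega_j)^2)^{1/2}\bigr)$ for all $u\in U'$ at which the right-hand side is positive.

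It remains to reconcile this with \eqref{eq:locallinear} and to handle the positivity caveat. Since $\mu$ is a gauge, $\mu(t)>0$ iff $t>0$, and $(\sum_j\dist(u,\Omega_j)^2)^{1/2}=0$ iff $\max_j\dist(u,\Omega_j)=0$ iff $u\in\cap_j\Omega_j$; at such $u$ both \eqref{eq:locallinear} and the subregularity inequality hold trivially, as both sides vanish and $\mu'(0)=0$. For every other $u$ one invokes the elementary comparison $\max_j\dist(u,\Omega_j)\le\bigl(\sum_j\dist(u,\Omega_j)^2\bigr)^{1/2}\le\sqrt m\,\max_j\dist(u,\Omega_j)$. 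For the \emph{forward} implication: from \eqref{eq:locallinear}, from $\sqrt m\,\mu'\le\mu$, and from monotonicity of $\mu$ together with the left-hand inequality of this comparison, $\sqrt m\,\dist(u,\cap_j\Omega_j)\le\sqrt m\,\mu'(\max_j\dist(u,\Omega_j))\le\mu(\max_j\dist(u,\Omega_j))\le\mu\bigl((\sum_j\dist(u,\Omega_j)^2)^{1/2}\bigr)$, which, by the first paragraph, is exactly subtransversality relative to $\Lambda$ at $\xbar$ for $0$ with gauge $\mu$ on a suitable $U$.

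For the \emph{converse}, given subtransversality with gauge $\mu$ on some $U$, let $U'$ be its diagonal slice; for $u\in U'\setminus\cap_j\Omega_j$, using the right-hand inequality of the comparison, monotonicity of $\mu$, and the assumed $\mu(\sqrt m\,t)\le\sqrt m\,\mu'(t)$ at $t=\max_j\dist(u,\Omega_j)$, one gets $\sqrt m\,\dist(u,\cap_j\Omega_j)\le\mu\bigl((\sum_j\dist(u,\Omega_j)^2)^{1/2}\bigr)\le\mu(\sqrt m\,\max_j\dist(u,\Omega_j))\le\sqrt m\,\mu'(\max_j\dist(u,\Omega_j))$, and dividing by $\sqrt m$ gives \eqref{eq:locallinear}. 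I do not anticipate a real obstacle: the content is entirely in the first paragraph's identification of $\dist(0,\Psi(x))$ and $\dist(x,\Psi^{-1}(0)\cap\Lambda)$ with the two scalar quantities above, after which both directions are a two-line chase through the definition of a gauge. The one point to watch is that the factor $\sqrt m$ appears twice and for two different reasons — once from the diagonal embedding and once from comparing the $\ell^2$-norm of the $m$ distances with their maximum — which is why the hypotheses read $\sqrt m\,\mu'\le\mu$ in one direction and $\mu(\sqrt m\,\cdot)\le\sqrt m\,\mu'$ in the other.
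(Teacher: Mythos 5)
Your proposal is correct and follows essentially the same route as the paper: the identification of $\Psi^{-1}(0)\cap\Lambda$ with the diagonal embedding of $\cap_{j}\Omega_j$, the computation $\dist(0,\Psi(x))=\dist(x,\Omega)$ for diagonal $x$, and the two-sided comparison $\max_j\dist(u,\Omega_j)\le\bigl(\sum_j\dist(u,\Omega_j)^2\bigr)^{1/2}\le\sqrt m\,\max_j\dist(u,\Omega_j)$ combined with monotonicity of the gauges are exactly the steps in the paper's argument. Your explicit handling of the degenerate case $u\in\cap_j\Omega_j$ (where the definition's positivity proviso kicks in) is a small point the paper leaves implicit, but it changes nothing of substance.
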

\begin{proof}
Let $x=(u,u,\dots, u)\in\Ebb^m$ with 
$u\in U'$ where $U'$ denotes a neighborhood of $\ubar$.  
Note that $\Pi x=x$ for all $x\in \Lambda$.
Moreover, for $\Psi(x)\equiv P_{\Omega}\paren{\Pi x}- \Pi x$ with 
$\Omega\equiv\Omega_1\times\Omega_2\times\cdots\times\Omega_m$, it holds that 
\begin{equation}\label{e:inverse Psi}
   \paren{\cap_{j=1}^m\Omega_j,\cap_{j=1}^m\Omega_j, \dots, \cap_{j=1}^m\Omega_j}\cap \Lambda = \Psi^{-1}(0)\cap \Lambda.
\end{equation}
To see this, note that any element $z\in\Psi^{-1}(0)\cap \Lambda$ satisfies $z\in \Lambda$ and 
$0\in P_{\Omega}\paren{\Pi z}- \Pi z$, 
which means that $z_i=z_j$ and $z_j\in\Omega_j$ for $i,j=1,2,\dots,m$.  In other words, $z_i\in\cap_{j=1}^m\Omega_j$
and  $z_i=z_j$ for $i,j=1,2,\dots,m$, which is just \eqref{e:inverse Psi}.

Denote $\Omega\equiv\Omega_1\times\Omega_2\times\cdots\times\Omega_m$. For the first implication,
if \eqref{eq:locallinear} is satisfied,
it holds that
\begin{eqnarray}
   \dist\paren{x,\paren{\cap_{j=1}^m\Omega_j, \cap_{j=1}^m\Omega_j, \dots, \cap_{j=1}^m\Omega_j}\cap \Lambda}&=&
 \sqrt{m} \dist\paren{u,\cap_{j=1}^m\Omega_j}\nonumber\\
&\leq&\sqrt{m}\,\mu'\paren{\max_{j=1,\dots, m}\klam{\dist\paren{u,\Omega_j}}}\nonumber\\
&\leq&\sqrt{m}\,\mu'\paren{\dist\paren{x,\Omega}}\nonumber\\
&\leq& \mu\paren{\dist\paren{x,\Omega}}\nonumber\\
   &=&\mu\paren{\dist\paren{\Pi x, P_{\Omega}(\Pi x)}}\nonumber\\
   &=&\mu\paren{\dist\paren{0,\Psi(x)}}\label{e:llr norm}
   \end{eqnarray}
whenever $x\in U\cap \Lambda=\set{x=(u,u,\dots,u)}{u\in U'}$.  
By \eqref{e:inverse Psi}, the inequality \eqref{e:llr norm} is equivalent to 
\begin{equation}\label{e:stvl}
   \dist\paren{x, \Psi^{-1}(0)\cap \Lambda}\leq \mu\paren{\dist\paren{0,\Psi(x)}}, \quad \forall x\in U\cap \Lambda,
\end{equation}
which is the definition of subtransverality of $\klam{\Omega_1, \Omega_2, \dots, \Omega_m}$ relative to $\Lambda$ at $\xbar$ for $0$ with gauge $\mu$.

For the reverse implication,
if \eqref{e:stvl} is satisfied, (using \eqref{e:inverse Psi})
it holds that
\begin{eqnarray}
   \dist\paren{u,\cap_{j=1}^m\Omega_j}&=&
 \frac{1}{\sqrt{m}}\, \dist\paren{x,\paren{\cap_{j=1}^m\Omega_j, \cap_{j=1}^m\Omega_j, \dots, \cap_{j=1}^m\Omega_j}\cap \Lambda}\nonumber\\
&=&\frac{1}{\sqrt{m}}\, \dist\paren{x,\Psi^{-1}(0)\cap \Lambda}\nonumber\\
&\leq&\frac{1}{\sqrt{m}}\, \mu\paren{\dist\paren{0,\Psi(x)}}\nonumber\\
&=&\frac{1}{\sqrt{m}}\,\mu\paren{\dist\paren{x,\Omega}}\nonumber\\
&\leq&\frac{1}{\sqrt{m}}\,\mu\paren{\sqrt{m}\max_{j=1,\dots, m}\klam{\dist\paren{u,\Omega_j}}}\nonumber\\
&\leq& \mu'\paren{\max_{j=1,\dots, m}\klam{\dist\paren{u,\Omega_j}}},
\quad\forall u\in U'.
\nonumber
\end{eqnarray}
This is \eqref{eq:locallinear}.
\end{proof}

Note that if one endows $\Ebb^m$ with the maximum norm, ($\norm{(x_1,x_2,\ldots,x_m)}_{\Ebb^m}:= \max_{1\le j\le m}\norm{x_j}_{\Ebb}$ ), there holds
\begin{align*}
&\dist\paren{x,\paren{\cap_{j=1}^m\Omega_j, \cap_{j=1}^m\Omega_j, \dots, \cap_{j=1}^m\Omega_j}\cap \Lambda} = \dist\paren{u,\cap_{j=1}^m\Omega_j};\\
&\dist\paren{x,\Omega} = \max_{j=1,\dots, m}\dist\paren{u,\Omega_j}\quad \mbox{ for all }\; u \mbox{ and } x\; \mbox{ as above}.
\end{align*}
Then the two properties in Proposition \ref{t:metric characterization} are equivalent for the same gauge $\mu'=\mu$.
\bigskip

By \cite[Theorem 1]{KruLukNgu16}, Proposition \ref{t:metric characterization} shows that 
Definition \ref{d:(s)lf}\ref{d:lf} coincides with subtransversality defined in \cite[Definition~6]{KruLukNgu16} for points 
of intersection.   This notion was developed to bring many other definitions of 
regularities of collections of sets \cite{LewisMalick08, LewisLukeMalick09, BauLukePhanWang13a, BauLukePhanWang13b, 
DruIofLew15, KrugerNguyen15, Kruger06} 
under a common framework.  
The definition given in \cite{KruLukNgu16}, however, does not immediately lead to a characterization 
of the relation between sets at points 
that are not common to all sets. There is much to be done to align the many different characterizations
of (sub)transversality studied in \cite{KruLukNgu16} with Definition \ref{d:(s)lf} above, but this is not our main 
interest here.  

\subsubsection{Cyclic projections}\label{s:cp}
Having established the basic geometric language of set feasibility and its connection to the averaging and stability
properties of fixed point mappings,  we can now pursue our main goal for this section: new 
convergence results for cyclic projections between sets with possibly empty intersection, Theorem \ref{t:cp ncvx}
 and 
Corollary \ref{t:cp cvx}.  
The majority of the work, and the source of technical complications, lies in constructing an appropriate 
fixed point mapping in the right space in order to be able to apply Theorem \ref{t:metric subreg convergence}. 
As we have already said, establishing the extent of almost averaging is a straight-forward application of 
Theorem \ref{t:subreg proj-ref}. Thanks to Proposition \ref{t:av-comp av} this can be stated in terms of the 
more primitive property of elemental set regularity. The challenging part is to show that subtransversality 
as introduced above leads to metric subregularity of an appropriate fixed point surrogate for 
cyclic projections, Proposition \ref{t:msr cp}.  In the process we show in Proposition \ref{t:entaglement} that   
elemental regularity and subtransversality become entangled and it is not clear whether they can be 
completely separated when it comes to necessary conditions for convergence of cyclic projections. 

Given a collection of closed subsets of $\Ebb$,  $\{\Omega_1, \Omega_2, \dots, \Omega_m\}$ ($m\geq 2$), and an initial 
point $u^0$, the cyclic projections algorithm generates the sequence $(u^k)_{k\in\Nbb}$ by 
\begin{equation}\label{e:P_0}
 u^{k+1}\in P_0 u^k\quad P_0\equiv P_{\Omega_1}P_{\Omega_2}\cdots P_{\Omega_m}P_{\Omega_1}.
\end{equation}
Since projectors are idempotent, the initial $P_{\Omega_1}$ at the right end of the cycle has no 
real effect on the sequence, though we retain it for technical reasons.  We will assume throughout this 
section that $\Fix P_0\neq \emptyset$.  

Our analysis proceeds on an appropriate product space designed for the cycles associated with 
a given fixed point of $P_0$.
As above we will use $\Omega$ to denote 
the sets $\Omega_j$ on $\Ebb^m$:  
$\Omega\equiv \Omega_1, \times\Omega_2\times\cdots\times\Omega_m$.  Let $\ubar\in \Fix P_0$ and let 
$\zetabar\in \Zcal(\ubar)$ where 
\begin{equation}
   \Zcal(u)\equiv \set{\zeta\equiv z-\Pi z}{ z\in W_0\subset\Ebb^m, ~ z_1=u}
\end{equation}%
for 
\begin{equation}
   W_0\equiv\set{x\in\Ebb^m}%
   {x_m\in P_{\Omega_m} x_1,\, x_j\in P_{\Omega_{j}}x_{j+1},~j=1,2,\dots,m-1}.
\end{equation}%
Note that $\sum_{j=1}^{m}\zetabar_j=0$.
The vector $\zetabar$ is a 
{\em difference vector} which gives information regarding the intra-steps 
of the cyclic projection operator $P_0$ at the fixed point $\ubar$.
In the case of only two sets, a difference vector is frequently called a {\em gap} vector \cite{BauBorSVA, BCL3, Luke08, BauMou16b}.
This is unique in the convex case, but need not be in the nonconvex case 
(see Lemma \ref{l:unique cycles} below).  
In the more general setting 
we have here, this corresponds to nonuniqueness of cycles for cyclic projections.  This greatly complicates matters
since the fixed points associated with $P_0$ will not, in general, be associated with cycles that are the same 
length and orientation.  Consequently, the usual trick of looking at the zeros of $P_0-\Id$ is rather uninformative, and 
another mapping needs to be constructed which distinguishes fixed points associated with different cycles.  
The following development establishes some of the key properties of difference vectors and cycles which 
then motivates the mapping that we construct for this purpose. 
  
To analyze the cyclic projections algorithm we consider the sequence on the product space  on $\Ebb^m$, 
$\paren{x^k}_{k\in\Nbb}$ generated by $x^{k+1}\in T_{\zetabar}x^k$ with
\begin{subequations}\label{e:Tc prod}
\begin{equation}
 \mmap{T_{\zetabar}}{\Ebb^m}{\Ebb^m}:x\mapsto\set{\paren{x_1^+, x_1^+-\zetabar_1, \dots, x_1^+-\sum_{j=1}^{m-1}\zetabar_j}}{x_1^+\in P_0x_1}
\end{equation}%
for $\zetabar\in \Zcal(\ubar)$ where $\ubar\in\Fix P_0$. 
\end{subequations}
In order to isolate cycles we restrict our attention to relevant subsets of $\Ebb^m$.  
These are
\begin{subequations}
\begin{eqnarray}
 W(\zetabar)&\equiv&\set{x\in\Ebb^m}{x-\Pi x = \zetabar},
 \\
 L&\equiv&\mbox{ an affine subspace with } \mmap{T_{\zetabar}}{L}{L}\und
 \\ 
 \Lambda&\equiv&L\cap W(\zetabar).
\end{eqnarray}
\end{subequations}
The set $W(\zetabar)$ is an affine
transformation of the diagonal of the product space and thus 
an affine subspace: for $x, y\in W(\zetabar)$, $z=\lambda x+(1-\lambda)y$ satisfies $z-\Pi z =\zetabar$ for all 
$\lambda\in\Rbb$.  This affine subspace is used to characterize the local geometry of the sets in 
relation to each other at fixed points of the cyclic projection operator.  

Points in $\Fix P_0$ can correspond to cycles of different lengths, 
hence an element $x\in \Fix T_{\zetabar}$ need not be in $W_0$ and vice verse, 
as the next example 
demonstrates.    
\begin{eg}[$\Fix T_{\zetabar}$ and $W_0$]\label{eg:Tzeta supset W0}
Consider the sets $\Omega_1=\{0, 1\}$ and $\Omega_2=\{0, 3/4\}$.   The 
cyclic projections operator $P_0$ has fixed points $\{0, 1\}$ and two 
corresponding cycles, $\Zcal(0)=\{(0,0)\}$ and $\Zcal(1)=\{(1/4,-1/4)\}$.
Let $\zetabar=(1/4,-1/4)$.   Then $(0,-1/4)\in\Fix T_{\zetabar}$ but 
$(0,-1/4)\notin W_0$.  Conversely, the vector $(0, 0)\in W_0$, but 
$(0,0)\notin\Fix T_{\zetabar}$.  The point $(1,3/4)$, however, belongs
to both $W_0$ and $\Fix T_{\zetabar}$.
\end{eg}

The example above shows that  what distinguishes elements in 
$\Fix T_{\zetabar}$ from each other is whether or not they also belong to $W_0$.  
The next lemma establishes that, on appropriate subsets,  
a fixed point of $T_{\zetabar}$ can be identified meaningfully with a vector 
in the image of the mapping $\Psi$ in Definition \ref{d:(s)lf} which is used to characterize the 
alignment of the sets $\Omega_j$ to each other at points of interest (in particular, fixed points of the 
cyclic projections operator). 

\begin{lemma}\label{l:W relations}
Let $\ubar\in \Fix P_0$ and let 
$\zetabar\in \Zcal(\ubar)$.  Define 
$\Psi\equiv \paren{P_\Omega-\Id}\circ \Pi$ and  $\Phi_{\zetabar}\equiv T_{\zetabar}-\Id$.
\begin{enumerate}[(i)]
\item \label{l:W relations -1} $T_\zetabar$ maps $W(\zetabar)$ to itself. Moreover 
$x\in \Fix T_{\zetabar}$ if and only if 
$x\in W(\zetabar)$ with $x_1\in\Fix P_0$.  Indeed, 
\begin{equation}
   \Fix T_{\zetabar}=\set{x=(x_1, x_2, \dots, x_m)\in\Ebb^m}%
  {x_1\in\Fix P_0, ~x_j=x_1-\sum_{i=1}^{j-1}\zetabar_i,~ j=2,3,\dots,m}.
\end{equation}%
\item\label{l:W relations i} A point $\zbar\in\Fix T_{\zetabar}\cap W_0$ 
if and only if $~\zetabar\in \Psi(\zbar)$ if and only if  $~\zetabar\in\paren{\Phi_{\zetabar}\circ\Pi}(\zbar)$.
\item\label{l:W relations ii}  
$   \Psi^{-1}(\zetabar)\cap W(\zetabar)\subseteq \Phi_{\zetabar}^{-1}(0)\cap W(\zetabar).$
\item\label{l:W relations iii} 
If the distance is with respect to the Euclidean norm then 
$\dist\paren{0,\Phi_{\zetabar}(x)} = \sqrt{m}\dist\paren{x_1,P_0x_1}.$
\end{enumerate}
\end{lemma}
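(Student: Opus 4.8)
The plan is to reduce each of the four assertions to a coordinatewise unfolding of the defining formulas for $T_{\zetabar}$, $\Psi=\paren{P_\Omega-\Id}\circ\Pi$ and $\Phi_{\zetabar}=T_{\zetabar}-\Id$, using only the one global fact $\sum_{j=1}^{m}\zetabar_j=0$ recorded just before the lemma. Part~\ref{l:W relations -1} is the workhorse. Fix $x\in\Ebb^m$ and $y\in T_{\zetabar}x$, so $y_j=x_1^+-\sum_{i=1}^{j-1}\zetabar_i$ for some $x_1^+\in P_0x_1$ (empty sum when $j=1$). Then $(y-\Pi y)_j=y_j-y_{j+1}=\zetabar_j$ for $j<m$, while $(y-\Pi y)_m=y_m-y_1=-\sum_{i=1}^{m-1}\zetabar_i=\zetabar_m$ by $\sum_j\zetabar_j=0$; hence $T_{\zetabar}$ maps all of $\Ebb^m$, in particular $W(\zetabar)$, into $W(\zetabar)$. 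If moreover $x\in T_{\zetabar}x$, reading the first coordinate forces $x_1=x_1^+\in P_0x_1$, that is, $x_1\in\Fix P_0$, and the remaining coordinates give $x_j=x_1-\sum_{i=1}^{j-1}\zetabar_i$; the reverse implication is immediate. This yields the ``iff'', the displayed formula for $\Fix T_{\zetabar}$, and the containment $\Fix T_{\zetabar}\subset W(\zetabar)$.

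For Parts~\ref{l:W relations i} and \ref{l:W relations ii} I would, using the containment just proved, restrict attention to $\zbar\in W(\zetabar)$, where $\zbar_j-\zbar_{j+1}=\zetabar_j$ ($j<m$) and $\zbar_m-\zbar_1=\zetabar_m$. Since $P_\Omega$, hence $P_\Omega\circ\Pi$, acts coordinatewise, $\zetabar\in\Psi(\zbar)$ unfolds to $\zbar_{j+1}+\zetabar_j\in P_{\Omega_j}(\zbar_{j+1})$ for $j<m$ together with $\zbar_1+\zetabar_m\in P_{\Omega_m}(\zbar_1)$; substituting $\zbar_j=\zbar_{j+1}+\zetabar_j$ and $\zbar_m=\zbar_1+\zetabar_m$ turns this into the cyclic system $\zbar_j\in P_{\Omega_j}(\zbar_{j+1})$ ($j<m$), $\zbar_m\in P_{\Omega_m}(\zbar_1)$, i.e.\ $\zbar\in W_0$; chaining these inclusions (using $\zbar_1\in\Omega_1$, so $\zbar_1\in P_{\Omega_1}\zbar_1$) shows $\zbar_1\in\Fix P_0$, whence $\zbar\in\Fix T_{\zetabar}$ by Part~\ref{l:W relations -1}. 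Similarly, since $T_{\zetabar}(\Pi\zbar)$ depends only on $(\Pi\zbar)_1=\zbar_2$, the same substitution collapses $\zetabar\in\paren{\Phi_{\zetabar}\circ\Pi}(\zbar)=T_{\zetabar}(\Pi\zbar)-\Pi\zbar$ to the single condition $\zbar_1\in P_0\zbar_2$, which on $W(\zetabar)$ is again equivalent to $\zbar\in W_0$. This gives the chain of equivalences in Part~\ref{l:W relations i}, and Part~\ref{l:W relations ii} is its ``only if'' half applied to $x\in\Psi^{-1}(\zetabar)\cap W(\zetabar)$: there $\zetabar\in\Psi(x)$ together with $x\in W(\zetabar)$ force $x\in\Fix T_{\zetabar}$, that is, $0\in\Phi_{\zetabar}(x)$, which is exactly the stated containment.

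Part~\ref{l:W relations iii} is then a direct computation: on $W(\zetabar)$ one has $x_j=x_1-\sum_{i=1}^{j-1}\zetabar_i$, so every element of $\Phi_{\zetabar}x=T_{\zetabar}x-x$ has $j$-th coordinate $\bigl(x_1^+-\sum_{i=1}^{j-1}\zetabar_i\bigr)-x_j=x_1^+-x_1$, independent of $j$; its Euclidean norm is therefore $\sqrt{m}\,\norm{x_1^+-x_1}$, and the infimum over $x_1^+\in P_0x_1$ gives $\dist\paren{0,\Phi_{\zetabar}(x)}=\sqrt{m}\,\dist\paren{x_1,P_0x_1}$. The bulk of the work is clerical --- keeping the cyclic index shifts induced by $\Pi$ straight and invoking $\sum_j\zetabar_j=0$ to ``close'' the last coordinate --- and the only genuinely delicate point I anticipate is, within Part~\ref{l:W relations i}, the passage between membership in $W_0$ and the lone cyclic‑composition statement $\zbar_1\in P_0\zbar_2$: the implication from $W_0$ is a routine chaining of the $W_0$-inclusions, but its converse must be run carefully on $W(\zetabar)$, where $\zbar_2,\dots,\zbar_m$ are pinned down by $\zetabar$ so that the cycle realizing $\zbar_1\in P_0\zbar_2$ is forced to be the prescribed one; I would write that step out in full rather than leave it implicit, since it is exactly the place where the nonconvexity (multivaluedness of the projectors) could otherwise bite.
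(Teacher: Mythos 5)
Your strategy --- unfolding $T_{\zetabar}$, $\Psi$ and $\Phi_{\zetabar}$ coordinatewise and closing the cycle with $\sum_{j}\zetabar_j=0$ --- is exactly the paper's (the paper merely compresses parts \ref{l:W relations -1} and \ref{l:W relations iii} to ``immediate''/``obvious''). Your arguments for \ref{l:W relations -1}, for the containment \ref{l:W relations ii}, and for the norm identity \ref{l:W relations iii} are correct; note only that \ref{l:W relations iii}, like your computation, genuinely requires $x\in W(\zetabar)$ (otherwise the coordinates of $\Phi_{\zetabar}(x)$ carry $j$-dependent offsets), a restriction absent from the literal statement but present wherever the identity is invoked.

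The genuine problem sits exactly where you feared, in part \ref{l:W relations i}. Your argument for the $\Phi_{\zetabar}\circ\Pi$ equivalence reduces, on $W(\zetabar)$, to the claim that $\zbar_1\in P_0\zbar_2$ forces $\zbar\in W_0$ because ``the cycle realizing $\zbar_1\in P_0\zbar_2$ is forced to be the prescribed one.'' It is not: the cycle that $P_0$ builds from $\zbar_2$ can close up at $\zbar_1$ through intermediate points different from $\zbar_2,\dots,\zbar_m$. The paper's own Example \ref{eg:Tzeta supset W0} ($\Omega_1=\{0,1\}$, $\Omega_2=\{0,3/4\}$, $\zetabar=(1/4,-1/4)$) is a counterexample: $\zbar=(0,-1/4)\in W(\zetabar)$ has $\zbar_1=0\in P_0(-1/4)=\{0\}$, and indeed $\zetabar\in\paren{\Phi_{\zetabar}\circ\Pi}(\zbar)$, yet $\zbar\notin W_0$ because $-1/4\notin P_{\Omega_2}(0)=\{0\}$; so the second ``if and only if'' fails and only the implication $\zbar\in\Fix T_{\zetabar}\cap W_0\Rightarrow\zetabar\in\paren{\Phi_{\zetabar}\circ\Pi}(\zbar)$ survives. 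Separately, for the first equivalence your ``restrict attention to $W(\zetabar)$'' is an added hypothesis, not a consequence of $\Fix T_{\zetabar}\subset W(\zetabar)$, when you run the direction starting from $\zetabar\in\Psi(\zbar)$ (take $\Omega_1=\Rbb$, $\Omega_2=\{0\}$, $\zbar=(0,5)$: then $\zetabar=0\in\Psi(\zbar)$ but $\zbar\notin W_0$); the equivalence is true only with $\zbar\in W(\zetabar)$ imposed. To be fair, the paper's proof asserts the same unjustified equivalence (``\dots is equivalent to $\zbar\in\Fix T_{\zetabar}\cap W_0$''), so the defect is in the lemma rather than something you introduced, and the parts actually used downstream --- the one-directional containment \ref{l:W relations ii} in Proposition \ref{t:msr cp} and the identity \ref{l:W relations iii} on $W(\zetabar)$ --- are unaffected. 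But as written, the step you promised to ``write out in full'' cannot be written out: it is false.
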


\begin{proof}
\ref{l:W relations -1}:  This is immediate from the definitions of $W(\zetabar)$ and $T_{\zetabar}$. 

\ref{l:W relations i}: From the definition of $W_0$ it follows directly that $\zetabar\in \Psi(\zbar)$ if and 
only if $\zbar\in\Fix T_{\zetabar}\cap W_0$.  Moreover,  
$\zetabar\in \Phi_{\zetabar}(\Pi\zbar)=T_{\zetabar}\Pi\zbar-\Pi\zbar$
if and only if for each $j=1,2,\dots,m$ it holds that 
$\zbar_{j+1}+\zetabar_{j}\in \paren{T_{\zetabar}\Pi\zbar}_j = u - \sum_{i=1}^{j-1}\zetabar_i$
for some $u\in P_0\zbar_2$ and $\zbar_{m+1}\equiv \zbar_1$. 
Equivalently, for some $u\in P_0\zbar_2$ it holds that $\zbar_{j+1}+ \sum_{i=1}^{j}\zetabar_i=u$ 
for all $j=1, 2, \dots, m$.  Since $\sum_{i=1}^{m}\zetabar_i=0$, 
then $\zbar_1=u$, so $\zbar_{j+1}=\zbar_1- \sum_{i=1}^{j}\zetabar_i$ for all $j=1, 2, \dots, m$
and $\zbar_1\in P_0\zbar_2$ which, thanks to the redundancy of the first projector in the definition of 
$P_0$ \eqref{e:P_0} and the definition of $W_0$, is equivalent to $\zbar\in\Fix T_{\zetabar}\cap W_0$, 
as claimed. 

To establish \ref{l:W relations ii}, let $\zbar\in \Psi^{-1}(\zetabar)\cap W(\zetabar)$.  
Then $\zetabar\in\paren{\paren{P_\Omega-\Id}\circ\Pi}(\zbar)$, and,  since $\zbar\in W(\zetabar)$, also  
$\zetabar=\zbar-\Pi\zbar$.   Hence $\zetabar=\zbar-\Pi\zbar$ and 
$\zbar\in P_\Omega\paren{\Pi\zbar}$.  But this implies that $\zetabar=\zbar-\Pi\zbar$ and  
$\zbar_1\in P_0\zbar_1$, hence $\Phi_{\zetabar}(\zbar)=0$ and $\zbar\in W(\zetabar)$.  
That is, $\zbar\in\Phi_{\zetabar}^{-1}(0)\cap W(\zetabar)$ which 
verifies \ref{l:W relations ii}.

Relation \ref{l:W relations iii} is obvious from the definition of $\Phi_{\zetabar}$.
%
%
\end{proof}

\begin{lemma}[difference vectors: cyclic projections]\label{l:unique cycles}
	Let $\Omega_j\subseteq\Ebb$ be nonempty and closed ($j=1,2,\dots,m$).
	Let $S_0\subset\Fix P_0$, let $U_0$ be a neighborhood of $S_0$ and define 
	$U\equiv \set{z = (z_1,z_2,\ldots,z_m)\in W_0}{z_1\in U_0}$.  Fix
	$\bar{u}\in S_0$ and  the difference vector $\zetabar\in\mathcal{Z}(\bar{u})$ with $\zetabar=\zbar-\Pi\zbar$ 
	for the point $\zbar = (\zbar_1,\zbar_2,\ldots,\zbar_m)\in W_0$ having $\zbar_1=\bar{u}$.		
	If $\Omega_j$ is elementally subregular at $\zbar_j$ for $(\zbar_j,0)\in\gph\pncone{\Omega_j}$
	  with constant $\varepsilonbar_j$ and neighborhood $U_j:=p_j(U)$ of $\zbar_j$ (where $p_j$ is the $j$th coordinate projection operator),
	then
	\begin{equation}
	\|\zetabar-\zeta\|^2 \leq \sum_{j=1}^m\varepsilon_j \|\zbar_j-z_j\|^2 \quad (\varepsilon_j\equiv 2\varepsilonbar_j+2\varepsilonbar_j^2)
	\end{equation}%
	for the difference vector $\zeta\in\mathcal{Z}(u)$ with $u\in S_0$ and  
	$\zeta=z-\Pi z$ where $z= (z_1,z_2,\ldots,z_m)\in W_0$ with $z_1=u$. 
	If the sets $\Omega_j$ ($j=1,2,\ldots, m$) are in fact convex,  
 	then the difference vector is unique and independent of the initial point $\ubar$, that is, 
	$\mathcal{Z}(u)=\{\zetabar\}$ for all $u\in S_0$.

\end{lemma}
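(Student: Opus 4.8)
The plan is to deduce the statement from Lemma~\ref{l:difference vector averaged operators} (difference vectors of averaged compositions) applied with $T_j:=P_{\Omega_j}$, after converting elemental subregularity of the sets into an almost-averaging property of the projectors.

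First I would verify the hypotheses of Lemma~\ref{l:difference vector averaged operators}. Put $T:=P_{\Omega_1}\circ P_{\Omega_2}\circ\dots\circ P_{\Omega_m}$, so that $P_0=T\circ P_{\Omega_1}$. Since the outermost factor of $P_0$ is $P_{\Omega_1}$, every $\bar u\in\Fix P_0$ lies in $\Omega_1$; consequently $P_{\Omega_1}\bar u=\{\bar u\}$, whence $\bar u\in P_0\bar u=T\bar u$, i.e. $S_0\subset\Fix P_0\subset\Fix T$. The product-space objects $W_0$, the permutation $\Pi$ and the difference-vector mapping $\mathcal Z$ introduced in this section agree with those of Lemma~\ref{l:difference vector averaged operators} once $T_j=P_{\Omega_j}$. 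Finally, $\zbar\in W_0$ with $\zbar_1=\bar u\in S_0\subset U_0$ forces $\zbar\in U$, and hence $\zbar_j=p_j(\zbar)\in U_j=p_j(U)$ for every $j$.

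Next, by Theorem~\ref{t:subreg proj-ref}\ref{t:subreg proj-ref2}, the assumed elemental subregularity of $\Omega_j$ at $\zbar_j$ for $(\zbar_j,0)$ with constant $\varepsilonbar_j$ on $U_j$ yields that $P_{\Omega_j}$ is pointwise almost firmly nonexpansive at $\zbar_j$ on $U_j$ with violation $\varepsilon_j=2\varepsilonbar_j+2\varepsilonbar_j^2$; equivalently, $P_{\Omega_j}$ is pointwise almost averaged at $\zbar_j$ on $U_j$ with averaging constant $\alpha_j=\tfrac12$ and violation $\varepsilon_j$. Lemma~\ref{l:difference vector averaged operators} then applies with $\alpha=\max_{j}\alpha_j=\tfrac12$, so that $\tfrac{1-\alpha}{\alpha}=1$ and \eqref{eq:displacement vector lemma} becomes
\begin{equation*}
\|\zetabar-\zeta\|^2=\frac{1-\alpha}{\alpha}\,\|\zetabar-\zeta\|^2\leq\sum_{j=1}^m\varepsilon_j\,\|\zbar_j-z_j\|^2,
\end{equation*}
which is the asserted inequality. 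If in addition each $\Omega_j$ is convex, then by Proposition~\ref{t:set regularity equivalences}\ref{t: convex set regularity} one may take $\varepsilonbar_j=0$, so $P_{\Omega_j}$ is firmly nonexpansive and in particular pointwise averaged (with no violation) at $\zbar_j$ on $U_j$; the last assertion of Lemma~\ref{l:difference vector averaged operators} then gives $\mathcal Z(u)=\{\zetabar\}$ for every $u\in S_0$.

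The only step needing care is the invocation of Theorem~\ref{t:subreg proj-ref}: one must read the elemental-subregularity hypothesis on $\Omega_j$ in the precise form that theorem requires (subregularity at $\zbar_j$ relative to $P_{\Omega_j}^{-1}(\zbar_j)\cap U_j$ for the normal pairs in the set $V$ appearing there). Everything else is bookkeeping, together with the observation that the general difference-vector estimate specializes cleanly because projectors onto elementally subregular sets are almost \emph{firmly} nonexpansive, forcing $\alpha=\tfrac12$.
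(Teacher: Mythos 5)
Your proposal is correct and follows essentially the same route as the paper's proof: convert the elemental subregularity hypothesis into pointwise almost firm nonexpansiveness of each $P_{\Omega_j}$ via Theorem~\ref{t:subreg proj-ref}\ref{t:subreg proj-ref2} (so $\alpha_j=1/2$ and the factor $\tfrac{1-\alpha}{\alpha}$ equals $1$), then specialize Lemma~\ref{l:difference vector averaged operators}, with the convex case handled by zero violation and firm nonexpansiveness. Your extra bookkeeping (checking $S_0\subset\Fix T$, $\zbar_j\in U_j$) and your caveat about matching the precise form of the subregularity hypothesis in Theorem~\ref{t:subreg proj-ref} are sensible additions but do not change the argument.
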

\begin{proof} 
	 Note that $U_0\subset\Omega_1$ and $U_j\subset\Omega_j$ ($j=1,2,\ldots, m$). 
	 By Theorem \ref{t:subreg proj-ref}\ref{t:subreg proj-ref2}, the projectors $P_{\Omega_j}$ are 
	  pointwise almost firmly nonexpansive at $\zbar_j$
	  on $U_j$ with violation 
	  $\varepsilon_j\equiv 2\varepsilonbar_j+2\varepsilonbar_j^2$ (and averaging constant $\alpha_j=1/2$). 
	   If the sets $\Omega_j$ are convex, then the violation $\varepsilon_j=0$ and the projectors are 
	  firmly nonexpansive (globally). 
	 The result then follows by specializing Lemma~\ref{l:difference vector averaged operators} to pointwise 
	 almost firmly nonexpansive (respectively, firmly nonexpansive) projectors.
\end{proof}

\begin{propn}[metric subregularity of cyclic projections]\label{t:msr cp}
Let $\ubar\in \Fix P_0$ and $\zetabar\in \Zcal(\ubar)$, let $\xbar=(\xbar_1, \xbar_2, \dots, \xbar_m)\in W_0$ 
satisfy $\zetabar= \xbar-\Pi\xbar$ with $\xbar_1=\ubar$,
 and let $L$ be an affine subspace containing $\xbar$ with $\mmap{T_{\zetabar}}{L}{L}$.
Suppose the following hold:
\begin{enumerate}[(a)]
\item\label{t:msr cp i} the collection of sets $\{\Omega_1, \Omega_2, \dots, \Omega_m\}$ is subtransversal at 
$\xbar$ for $\zetabar$ relative to 
$\Lambda\equiv L\cap W(\zetabar)$ with constant $\kappa$ and neighborhood $U$ of $\xbar$;
 \item\label{t:msr cp ii} 
there exists a positive constant $\sigma$ such that 
\[
   \dist\paren{\zetabar, \Psi(x)}\leq \sigma \dist(0, \Phi_{\zetabar}(x)),\quad \forall x\in \Lambda\cap U \mbox{ with } x_1\in\Omega_1.
\]
\end{enumerate}
Then the mapping $\Phi_{\zetabar}\equiv T_{\zetabar}-\Id$ is metrically subregular for $0$ on $U$ (metrically regular on $U\times \{0\}$) relative to 
$\Lambda$ with constant $\kappabar=\kappa\sigma$.
\end{propn}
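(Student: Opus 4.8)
The plan is to obtain the metric subregularity estimate for $\Phi_{\zetabar}$ by composing three ingredients that are already available: the inverse‑image inclusion of Lemma \ref{l:W relations}, the metric subregularity of $\Psi$ which is precisely hypothesis \ref{t:msr cp i}, and the residual comparison in hypothesis \ref{t:msr cp ii}. Shrinking $U$ if necessary so that both \ref{t:msr cp i} and \ref{t:msr cp ii} are in force on it, the whole argument reduces to a short chain of inequalities.

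First I would recast Lemma \ref{l:W relations}\ref{l:W relations ii} in the form needed here. Since $\Psi^{-1}(\zetabar)\cap W(\zetabar)\subseteq \Phi_{\zetabar}^{-1}(0)\cap W(\zetabar)$, intersecting both sides with the affine subspace $L$ and using $\Lambda=L\cap W(\zetabar)$ gives $\Psi^{-1}(\zetabar)\cap\Lambda\subseteq\Phi_{\zetabar}^{-1}(0)\cap\Lambda$, hence
\[
\dist\paren{x,\Phi_{\zetabar}^{-1}(0)\cap\Lambda}\le\dist\paren{x,\Psi^{-1}(\zetabar)\cap\Lambda}\qquad\text{for every }x.
\]

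Next I would fix $x\in U\cap\Lambda$ with $x_1\in\Omega_1$ and $\dist\paren{0,\Phi_{\zetabar}(x)}>0$, and split into two cases in order to respect the ``$0<\mu(\cdot)$'' convention of Definition \ref{d:(str)metric (sub)reg}. If $\dist\paren{\zetabar,\Psi(x)}=0$, then because $\Psi(x)=P_\Omega(\Pi x)-\Pi x$ is a closed set (the projection onto the closed set $\Omega$ is closed‑valued) we get $\zetabar\in\Psi(x)$, so $x\in\Psi^{-1}(\zetabar)\cap\Lambda\subseteq\Phi_{\zetabar}^{-1}(0)\cap\Lambda$ and the left‑hand side of the desired estimate is zero. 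If $\dist\paren{\zetabar,\Psi(x)}>0$, then hypothesis \ref{t:msr cp i}, which by Definitions \ref{d:(s)lf}\ref{d:lf} and \ref{d:(str)metric (sub)reg} means exactly that $\dist\paren{x,\Psi^{-1}(\zetabar)\cap\Lambda}\le\kappa\,\dist\paren{\zetabar,\Psi(x)}$ for $x\in U\cap\Lambda$, combined with the inclusion above and then with hypothesis \ref{t:msr cp ii}, yields
\[
\dist\paren{x,\Phi_{\zetabar}^{-1}(0)\cap\Lambda}\le\dist\paren{x,\Psi^{-1}(\zetabar)\cap\Lambda}\le\kappa\,\dist\paren{\zetabar,\Psi(x)}\le\kappa\sigma\,\dist\paren{0,\Phi_{\zetabar}(x)}.
\]
In either case the inequality holds with $\kappabar=\kappa\sigma$, which is the assertion. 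In the cyclic projections application this is all one needs, since $T_{\zetabar}$ forces $x_1^{+}\in P_0x_1\subseteq\operatorname{range}P_{\Omega_1}\subseteq\Omega_1$, so after one step every iterate has first coordinate in $\Omega_1$ and the qualifier ``$x_1\in\Omega_1$'' inherited from \ref{t:msr cp ii} is immaterial there.

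Since the proof is merely a composition of the hypotheses, I do not expect a genuine obstacle. The points that need a little care are: intersecting the inclusion of Lemma \ref{l:W relations}\ref{l:W relations ii} with $L$ so that it is phrased in terms of $\Lambda$ rather than $W(\zetabar)$; the bookkeeping around the positivity convention in Definition \ref{d:(str)metric (sub)reg}, which forces the short degenerate‑case argument above via closedness of $\Psi(x)$; and keeping track of the restriction $x_1\in\Omega_1$ coming from hypothesis \ref{t:msr cp ii}.
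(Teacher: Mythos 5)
Your proof is correct and follows essentially the same route as the paper's: the paper's argument is precisely the three-link chain $\dist\paren{x,\Phi_{\zetabar}^{-1}(0)\cap\Lambda}\le\dist\paren{x,\Psi^{-1}(\zetabar)\cap\Lambda}\le\kappa\,\dist\paren{\zetabar,\Psi(x)}\le\kappa\sigma\,\dist\paren{0,\Phi_{\zetabar}(x)}$ obtained from Lemma \ref{l:W relations}\ref{l:W relations ii} and the two hypotheses. Your additional remarks on the degenerate case and the qualifier $x_1\in\Omega_1$ are sensible bookkeeping that the paper leaves implicit.
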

\begin{proof}
A straightforward application of the assumptions and Lemma \ref{l:W relations}\ref{l:W relations ii} yields
\begin{eqnarray*}
(\forall x\in U\cap \Lambda \mbox{ with } x_1\in\Omega_1)\quad \dist\paren{x, \Phi_{\zetabar}^{-1}(0)\cap \Lambda}&\leq& 
\dist\paren{x, \Psi^{-1}(\zetabar)\cap \Lambda}\\
&\leq& 
\kappa\dist\paren{\zetabar, \Psi(x)}\\
&\leq& 
\kappa\sigma\dist\paren{0, \Phi_{\zetabar}(x)}.
\end{eqnarray*}
In other words, $\Phi_{\zetabar}$ is metrically subregular for $0$ on $U$ relative to 
$\Lambda$ with constant $\kappabar$, as claimed.  
\end{proof}

\begin{eg}[two intersecting sets]\label{eg:two sets}
To provide some insight into condition  \ref{t:msr cp ii} of Proposition \ref{t:msr cp} it is 
instructive to examine the case of two sets with nonempty intersection.  
Let $\xbar = (\ubar,\ubar)$ with $\ubar \in \Omega_1\cap\Omega_2$ and the difference vector $\zetabar=0\in \Zcal(\xbar)$.
To simplify the presentation, let us consider
 $L=\Ebb^2$ and $U=U'\times U'$, where $U'$ is a neighborhood of $\ubar$.
Then, one has
$\Lambda = W(0) =\{(u,u):u\in \Ebb\}$ and, hence,
$x\in \Lambda\cap U$  with $x_1\in\Omega_1$ is equivalent to $x=(u,u)\in U$ with $u\in\Omega_1\cap U'$.
For such a point $x=(u,u)$, one has
\begin{align*}
&\dist(0,\Psi(x))=\dist(u,\Omega_2),\\
&\dist(0,\Phi_{0}(x))=\sqrt{2}\dist\paren{u,P_{\Omega_1}P_{\Omega_2}(u)},
\end{align*}
where the last equality follows from the representation $\Phi_{0}(x)=\{(z-u,z-u)\in \Ebb^2:z\in P_{\Omega_1}P_{\Omega_2}(u)\}$.\\
 \ref{t:msr cp ii} of Proposition \ref{t:msr cp} becomes
\begin{equation}\label{Tha_03}
\dist(u,\Omega_2)\leq \gamma\dist(u,P_{\Omega_1}P_{\Omega_2}(u)),\quad \forall u\in \Omega_1\cap U'.
\end{equation}
where $\gamma:=\sqrt{2}\sigma>0$.  In \cite[Remark 12]{KruLukNgu16} the phenomenon of entanglement of 
elemental subregularity and regularity of collections of sets is briefly discussed in the context of 
other notions of regularity in the literature.   Inequality \eqref{Tha_03} serves as a type of 
conduit for this entanglement of regularities  as   
Proposition \ref{t:entaglement} demonstrates. 
\end{eg}
\begin{proposition}[elemental subregularity and \eqref{Tha_03} imply subtransversality]\label{t:entaglement}
Let $\ubar\in \Omega_1\cap \Omega_2$ and $U'$ be the neighborhood of $\ubar$ as in Example \ref{eg:two sets}.
Suppose that condition \eqref{Tha_03} holds and that the set $\Omega_1$ is elementally subregular relative to 
$\Omega_2$ at $\ubar$ for all $(\ybar,0)$ with $\ybar\in \Omega_1\cap U'$ with constant 
$\varepsilon<1/(1+\gamma^2)$ and the neighborhood $U'$.
Then $\{\Omega_1,\Omega_2\}$ is subtransversal at $\ubar$.
\end{proposition}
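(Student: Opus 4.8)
The plan is to derive the local linear error bound which, by Proposition~\ref{t:metric characterization} specialised to $m=2$, is equivalent to subtransversality of $\{\Omega_1,\Omega_2\}$ at $\ubar$; that is, to produce a constant $c>0$ and a ball $U''$ about $\ubar$ inside $U'$ with
\[
\dist(u,\Omega_1\cap\Omega_2)\le c\max\{\dist(u,\Omega_1),\dist(u,\Omega_2)\}\qquad\forall u\in U''.
\]
A routine reduction (replace a general $u$ by a nearest point $P_{\Omega_1}(u)$ and use the triangle inequality) shows it is enough to prove this for $u\in\Omega_1$ near $\ubar$, where the right-hand side is simply $\dist(u,\Omega_2)$. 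So the task becomes: bound $\dist(u,\Omega_1\cap\Omega_2)$ by a fixed multiple of $\dist(u,\Omega_2)$ for $u\in\Omega_1\cap U''$.

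First I would fix such a $u$ and run the cyclic projections from it: put $u^0:=u$ and recursively choose $x^k\in P_{\Omega_2}(u^k)$ and $u^{k+1}\in P_{\Omega_1}(x^k)$, so $u^{k+1}\in P_0(u^k)$ (the leading $P_{\Omega_1}$ in $P_0$ is inert on $\Omega_1$). Abbreviate $a_k:=\dist(u^k,\Omega_2)=\|u^k-x^k\|$, $b_k:=\dist(x^k,\Omega_1)=\|x^k-u^{k+1}\|$, $\delta_k:=\|u^k-u^{k+1}\|$; then $b_k\le a_k$ and $\delta_k\le a_k+b_k\le 2a_k$. The key step is to expand $\delta_k^2=\|(u^k-x^k)+(x^k-u^{k+1})\|^2$ and estimate the cross term using the elemental subregularity inequality \eqref{e:geom subreg} of $\Omega_1$ relative to $\Omega_2$ evaluated at the reference pair $(\ybar,\vbar)=(u^k,0)$ — this choice of $\ybar$ is the crucial trick — which yields
\[
\delta_k^2+b_k^2\le a_k^2+2\varepsilon\, b_k\delta_k.
\]
Independently, \eqref{Tha_03} applied at $u^k\in\Omega_1\cap U'$ gives $a_k\le\gamma\,\dist(u^k,P_0(u^k))\le\gamma\delta_k$. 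Plugging $\delta_k\ge a_k/\gamma$ and $2\varepsilon b_k\delta_k\le\varepsilon(b_k^2+\delta_k^2)$ into the last display and rearranging gives $(1-\varepsilon)b_k^2\le\bigl(1-(1-\varepsilon)/\gamma^2\bigr)a_k^2$, hence
\[
\dist(u^{k+1},\Omega_2)^2\le b_k^2\le q\, a_k^2,\qquad q:=\frac{\gamma^2-1+\varepsilon}{(1-\varepsilon)\gamma^2},
\]
and a two-line computation shows that the hypothesis $\varepsilon<1/(1+\gamma^2)$ is exactly what forces $q<1$; in the degenerate case $q\le 0$ one reads off $b_k=0$, i.e.\ $x^k\in\Omega_1\cap\Omega_2$, so the bound already holds after the first step.

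Iterating the contraction gives $a_k\le q^{k/2}\dist(u,\Omega_2)$ and $\delta_k\le 2q^{k/2}\dist(u,\Omega_2)$, so $(u^k)$ is Cauchy with $\|u-u^k\|\le\sum_{j\ge 0}\delta_j\le\tfrac{2}{1-\sqrt q}\dist(u,\Omega_2)=:c_0\dist(u,\Omega_2)$; its limit $u^\infty$ lies in the closed set $\Omega_1$ and satisfies $\dist(u^\infty,\Omega_2)=\lim_k a_k=0$, hence $u^\infty\in\Omega_1\cap\Omega_2$ and $\dist(u,\Omega_1\cap\Omega_2)\le c_0\dist(u,\Omega_2)$. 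Feeding this back through the reduction and Proposition~\ref{t:metric characterization} with the linear gauge $\mu(t)=\sqrt2\,(1+2c_0)\,t$ delivers subtransversality of $\{\Omega_1,\Omega_2\}$ at $\ubar$. The step I expect to require the most care — and the only real obstacle — is the neighbourhood bookkeeping: the one-step estimate and \eqref{Tha_03} are only valid while $u^k$ and $x^k=P_{\Omega_2}(u^k)$ remain in $U'$, so the argument must be run as an induction on $k$, using the a priori bounds $\|u^k-\ubar\|\le(1+c_0)\|u-\ubar\|$ and $\|x^k-\ubar\|\le 2\|u^k-\ubar\|$ to see that taking $U''$ to be a ball about $\ubar$ of radius roughly $(2(1+c_0))^{-1}$ times that of $U'$ keeps the whole orbit (and its auxiliary points) inside $U'$.
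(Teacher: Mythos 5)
Your proposal is correct and follows essentially the same route as the paper: the one-step estimate $\delta_k^2+b_k^2\le a_k^2+2\varepsilon b_k\delta_k$ combined with \eqref{Tha_03} is exactly the paper's derivation of $\|u'-u^+\|\le c\|u-u'\|$ (your $q$ equals the paper's $c^2=\tfrac{1}{1-\varepsilon}-\tfrac{1}{\gamma^2}$, and the threshold $\varepsilon<1/(1+\gamma^2)$ arises identically), after which the paper invokes the alternating-projections Cauchy-sequence argument of Lewis--Luke--Malick to reach a point of $\Omega_1\cap\Omega_2$ within $\tfrac{2}{1-c}\dist(u,\Omega_2)$ and concludes via Proposition~\ref{t:metric characterization}. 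You merely write out explicitly the iteration and neighbourhood bookkeeping that the paper delegates to the cited reference.
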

\begin{proof}
Choose a number $\delta'>0$ such that $\Ball_{2\delta'}(\ubar)\subset U'$.
Take any $u\in \Omega_1\cap \Ball_{\delta'}(\ubar)$ and $u^+\in P_{\Omega_1}P_{\Omega_2}(u)$. Let $u'\in P_{\Omega_2}(u)$ 
such that $u^+\in P_{\Omega_1}(u')$.
Note that $u'\in \Omega_2 \cap U'$. Without loss of generality, we can assume $u'\notin \Omega_1$.
Then $\|u-u'\|\ge \|u'-u^+\|>0$.
The elemental regularity of $\Omega_1$ relative to $\Omega_2$ at $\ubar$ for $(u,0)$ with constant $\varepsilon$ and neighborhood $U'$ yields
\begin{align*}
\ip{u'-u^+}{u-u^+}\le \varepsilon\|u'-u^+\|\|u-u^+\|.
\end{align*}
This inequality and condition \eqref{Tha_03} (note that $\dist(u,\Omega_2)=\|u-u'\|$ and $\dist(u,P_{\Omega_1}P_{\Omega_2}(u))\le \|u-u^+\|$) yield
\begin{align*}
\|u-u'\|^2 &= \|u-u^+\|^2+\|u^+-u'\|^2 +2\ip{u-u^+}{u^+-u'}\\
&\ge \|u-u^+\|^2+\|u^+-u'\|^2 - 2\varepsilon\|u'-u^+\|\|u-u^+\|\\
&= (1-\varepsilon)\paren{\|u-u^+\|^2+\|u^+-u'\|^2} +\varepsilon\paren{\|u'-u^+\|-\|u-u^+\|}^2\\
&\ge (1-\varepsilon)\paren{\|u-u^+\|^2+\|u^+-u'\|^2}\\
&\ge (1-\varepsilon)\paren{\frac{1}{\gamma^2}\|u-u'\|^2+\|u^+-u'\|^2}.
\end{align*}
It is clear that $\frac{1}{1-\varepsilon}\ge \frac{1}{\gamma^2}$, and
hence
\begin{align}
\|u'-u^+\| \le c\|u-u'\|,
\end{align}
where $c:=\sqrt{\frac{1}{1-\varepsilon}-\frac{1}{\gamma^2}}\in [0,1)$ as $\varepsilon<1/(1+\gamma^2)$.\\
Choose a number $\delta>0$ such that $\frac{1+c}{1-c}\delta\le \delta'$.
Employing the basic argument originated in \cite[Theorem 5.2]{LewisLukeMalick09}, one can derive that for any given point 
$u\in \Ball_{\delta}(\ubar)\cap\Omega_1$, there exists a point $\utilde\in \Omega_1\cap\Omega_2$ such that
\begin{align*}
\|u-\utilde\| \le \frac{2}{1-c}\|u-u'\|.
\end{align*}
In other words,
\begin{align*}
\frac{1-c}{2}\dist(u,\Omega_1\cap\Omega_2) \le \frac{1-c}{2}\|u-\utilde\| \le \|u-u'\| = \dist(u,\Omega_2),\quad \forall u\in \Ball_{\delta}(\ubar)\cap\Omega_1.
\end{align*}
The subtransversality of $\{\Omega_1,\Omega_2\}$ at $\ubar$ now follows from 
Proposition \ref{t:metric characterization} (or alternatively \cite[Theorem 1(iii)]{KruLukNgu16}).
\end{proof}

The main result of this section can now be presented.  This statement uses the full technology of regularities relativized 
to certain sets of points $S_j$ introduced in Definitions \ref{d:set regularity} and \ref{d:ane-aa} and used
in Proposition \ref{t:av-comp av}, as well as the expanded notion of subtransversality of sets at points of nonintersection
introduced in Definition \ref{d:(s)lf} and applied in Proposition \ref{t:msr cp}.  

\begin{thm}[convergence of cyclic projections]\label{t:cp ncvx}
Let $S_0\subset \Fix P_0\neq \emptyset$ and  $Z\equiv \cup_{u\in S_0}\Zcal(u)$.
Define 
\begin{equation}\label{e:Sj}
S_j\equiv \bigcup_{\zeta\in Z}\paren{S_0- \sum_{i=1}^{j-1}\zeta_j}
\quad (j=1,2\dots,m).
\end{equation}
Let $U\equiv U_1\times U_2, \times\cdots\times U_m$ 
be a neighborhood of $S\equiv S_1\times S_2\times\cdots\times S_m
$ 
and suppose that 
\begin{subequations}\label{e:neighborhood relations}
\begin{equation}
P_{\Omega_j}\paren{u- \sum_{i=1}^{j}\zeta_j}\subseteq S_0- \sum_{i=1}^{j-1}\zeta_j
\quad \forall~ u\in S_0, \forall~\zeta\in Z ~ 
\mbox{ for each } ~j=1,2\dots,m,
\label{e:S cond} 
\end{equation}
\begin{equation}\label{e:Uj}
P_{\Omega_{j}}U_{j+1}\subseteq U_j
\mbox{ for each } ~j=1,2\dots,m
\quad (U_{m+1}\equiv U_1).
\end{equation}
\end{subequations}
Let $\Lambda\equiv L\cap \aff\paren{\cup_{\zeta\in Z }W(\zeta)}\supset S$ such that $\mmap{T_\zeta}{\Lambda}{\Lambda}$
for all $\zeta\in  Z $ and some affine subspace $L$.   
Suppose that the following hold:
\begin{enumerate}[(a)]
\item\label{t:cp ncvx i} the set $\Omega_j$ is elementally subregular at all $\xhat_j\in S_j$ relative to $S_j$ for each 
\[
(x_j, v_j)\in V_j\equiv\set{(z, w)\in\gph\pncone{\Omega_j}}{ z+w\in U_j\und z\in P_{\Omega_j}(z+w)} 
\]
 with constant $\varepsilon_j\in (0,1)$ on the neighborhood $U_j$ for $j=1, 2, \dots, m$;
\item\label{t:cp ncvx ii} for each $\xhat = (\xhat_1, \xhat_2, \dots, \xhat_m)\in S$, the collection of sets 
$\{\Omega_1, \Omega_2, \dots, \Omega_m\}$ is subtransversal at 
$\xhat$ for $\zetahat\equiv \xhat-\Pi\xhat$ relative to 
$\Lambda$ with constant $\kappa$ on the neighborhood $U$;
 \item\label{t:cp ncvx iii} 
there exists a positive constant $\sigma$ such that for all $\zetahat\in Z $ 
\[
   \dist\paren{\zetahat, \Psi(x)}\leq \sigma \dist(0, \Phi_{\zetahat}(x))
\]
holds whenever $x\in \Lambda\cap U$ with $x_1\in\Omega_1$;
\item\label{t:cp ncvx iv} $\dist(x, S) \le \dist\paren{x, \Phi_{\zetahat}^{-1}(0)\cap \Lambda}$ for all $x\in U\cap \Lambda$, for all $\zetahat\in Z $.
\end{enumerate}
Then for $\zetabar\in Z $ fixed and $\xbar\in S$ with $\zetabar=\Pi\xbar-\xbar$, 
the sequence $\paren{x^k}_{k\in \Nbb}$ generated by $x^{k+1}\in T_{\zetabar}x^k$
seeded by a point $x^0\in W(\zetabar)\cap U$
with $x_1^0\in \Omega_1\cap U_1$ satisfies
\[
 \dist\paren{x^{k+1}, \Fix T_{\zetabar}\cap S}\leq c\dist(x^k, S)
\]
whenever $x^k\in U$
for 
\begin{equation}
c\equiv\sqrt{1+\varepsilonbar-\frac{1-\alpha}{\alpha\kappabar^2}} 
\end{equation}%
with 
\begin{equation}\label{e:e a cp}
\varepsilonbar\equiv \prod_{j=1}^m\paren{1+\varepsilontilde_j}-1,\quad 
 \varepsilontilde_j\equiv 
  4\varepsilon_j\frac{1+\varepsilon_j}{\paren{1-\varepsilon_j}^2}, 
 \quad
\alpha\equiv \frac{m}{m+1}
\end{equation}
and $\kappabar=\kappa\sigma$.
If, in addition, 
\begin{equation}
\kappabar < \sqrt{\frac{1-\alpha}{\varepsilonbar\alpha}},
\end{equation}%
then $\dist\paren{x^k, \Fix T_{\zetabar}\cap S}\to 0$, and hence $\dist\paren{x_1^k, \Fix P_0\cap S_1}\to 0$,  at least linearly with rate $c<1$. 
\end{thm}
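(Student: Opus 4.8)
The plan is to read the statement as an instance of Theorem~\ref{t:metric subreg convergence} (equivalently, of Theorem~\ref{t:Tconv} applied once and then iterated) for the operator $T=T_{\zetabar}$ on $\Lambda$ with distinguished set $S$; so the work is to verify its two hypotheses for $T_{\zetabar}$ --- pointwise almost averaging at the points of $S$, and metric subregularity of $\Phi_{\zetabar}\equiv T_{\zetabar}-\Id$ relative to $\Lambda$ --- the remaining order-of-distance requirement \ref{t:metric subreg convergence bi} being exactly hypothesis~\ref{t:cp ncvx iv}.

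\emph{Step 1 (almost averaging).} I would feed hypothesis~\ref{t:cp ncvx i} into Theorem~\ref{t:subreg proj-ref}\ref{t:subreg proj-ref2b}: along a cycle, $P_{\Omega_j}$ is evaluated at a point of $\Omega_{j+1}$ whose $P_{\Omega_j}$-image lands in $S_j$ by~\eqref{e:S cond}, i.e.\ at a point of the set ``$\Lambda'$'' of that theorem, so $P_{\Omega_j}$ is there pointwise almost firmly nonexpansive (averaging constant $1/2$) with the ``outside-the-set'' violation $\varepsilontilde_j=4\varepsilon_j(1+\varepsilon_j)/(1-\varepsilon_j)^2$ --- this is why $\varepsilontilde_j$, not $2\varepsilon_j+2\varepsilon_j^2$, appears. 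Propagating the neighborhoods $U_j$ and the sets $S_j$ around the cycle via the nesting relations~\eqref{e:neighborhood relations}, and discarding the redundant, on-$\Omega_1$-inert $P_{\Omega_1}$ at the right of~\eqref{e:P_0} (leaving $m$ effective factors), Proposition~\ref{t:av-comp av}\ref{t:av-comp av iii} makes $P_0$ pointwise almost averaged at the points of $S_1$ with violation $\varepsilonbar=\prod_{j=1}^m(1+\varepsilontilde_j)-1$ and averaging constant $\alpha=m/(m+1)$. I would then transfer this to $T_{\zetabar}$: since $T_{\zetabar}$ maps $W(\zetabar)$ into itself (Lemma~\ref{l:W relations}\ref{l:W relations -1}) and the iterates stay in $W(\zetabar)\cap L\subseteq\Lambda$, a coordinate-wise decomposition on $W(\zetabar)$ --- using $\dist(0,\Phi_{\zetabar}(x))=\sqrt{m}\,\dist(x_1,P_0x_1)$ from Lemma~\ref{l:W relations}\ref{l:W relations iii} --- turns the inequality of Proposition~\ref{t:average char}\ref{t:average char iii} for $P_0$ at $\xbar_1\in S_1$ into the same inequality for $T_{\zetabar}$ at the corresponding point of $S$, with the same $\varepsilonbar$ and $\alpha$.

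\emph{Step 2 (metric subregularity).} Hypotheses~\ref{t:cp ncvx ii} and~\ref{t:cp ncvx iii} are precisely the hypotheses of Proposition~\ref{t:msr cp}; through that proposition and Lemma~\ref{l:W relations}\ref{l:W relations ii} one gets $\Phi_{\zetabar}$ metrically subregular for $0$ on $U$ relative to $\Lambda$ with constant $\kappabar=\kappa\sigma$, via the chain $\dist(x,S)\le\dist(x,\Phi_{\zetabar}^{-1}(0)\cap\Lambda)\le\dist(x,\Psi^{-1}(\zetabar)\cap\Lambda)\le\kappa\,\dist(\zetabar,\Psi(x))\le\kappa\sigma\,\dist(0,\Phi_{\zetabar}(x))$ valid on all of $U\cap\Lambda$ with $x_1\in\Omega_1$; since $\xbar^+=\xbar$ for $\xbar\in P_S(x)$ on the slice $W(\zetabar)$, this is the estimate $\dist(x,S)\le\kappabar\,\|(x-x^+)-(\xbar-\xbar^+)\|$ required by condition~\ref{t:metric subreg convergence bii}. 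Because this and hypothesis~\ref{t:cp ncvx iv} hold on all of $U\cap\Lambda$, no annular exclusion is needed, and combining with Step~1 for condition~\ref{t:metric subreg convergence a} all hypotheses of Theorem~\ref{t:metric subreg convergence} are in place.

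\emph{Step 3 (conclusion).} The sequence $x^{k+1}\in T_{\zetabar}x^k$ started at $x^0\in W(\zetabar)\cap U$ with $x^0_1\in\Omega_1\cap U_1$ stays in $W(\zetabar)\cap L\subseteq\Lambda$ with first coordinates in $\Omega_1$, and $T_{\zetabar}y\subseteq\Fix T_{\zetabar}\cap S$ for $y\in S$ (Lemma~\ref{l:W relations}\ref{l:W relations -1}, the definition~\eqref{e:Sj} of $S_j$, and single-valuedness of $P_0$ on $S_1$ by Proposition~\ref{t:single-valued paa}); hence Theorem~\ref{t:Tconv}, with the constants $\varepsilonbar,\alpha,\kappabar$ of Steps~1 and~2, yields the one-step estimate $\dist(x^{k+1},\Fix T_{\zetabar}\cap S)\le c\,\dist(x^k,S)$ with $c=\sqrt{1+\varepsilonbar-(1-\alpha)/(\alpha\kappabar^2)}$ whenever $x^k\in U$. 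If in addition $\kappabar<\sqrt{(1-\alpha)/(\varepsilonbar\alpha)}$ then $c<1$, and Corollary~\ref{t:subfirm convergence} (or Theorem~\ref{t:metric subreg convergence}) gives $\dist(x^k,\Fix T_{\zetabar}\cap S)\to 0$ at rate $c$; the first-coordinate projection, being $1$-Lipschitz for the Euclidean product norm and carrying $\Fix T_{\zetabar}\cap S$ into $\Fix P_0\cap S_1$, then yields $\dist(x^k_1,\Fix P_0\cap S_1)\to 0$ at the same rate. The main obstacle is the bookkeeping in Step~1: aligning the $S_j$, the $U_j$ and the cycle reference points so that Theorem~\ref{t:subreg proj-ref}\ref{t:subreg proj-ref2b} and Proposition~\ref{t:av-comp av}\ref{t:av-comp av iii} genuinely apply, and carefully checking that the product-space lift $T_{\zetabar}$ inherits $\varepsilonbar$ and $\alpha$ unchanged at all points of $S$ --- including those lying on cycles whose difference vectors differ from $\zetabar$ --- via the coordinate decomposition on $W(\zetabar)$.
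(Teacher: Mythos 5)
Your proposal is correct and follows essentially the same route as the paper's proof: Theorem~\ref{t:subreg proj-ref}\ref{t:subreg proj-ref2b} under hypothesis~\ref{t:cp ncvx i} (with the nesting relations~\eqref{e:neighborhood relations}) gives each $P_{\Omega_j}$ pointwise almost firmly nonexpansive at points of $S_j$ with violation $\varepsilontilde_j$, Proposition~\ref{t:av-comp av}\ref{t:av-comp av iii} composes these into the stated $\varepsilonbar$ and $\alpha=m/(m+1)$ for $P_0$ (hence for the shifted lift $T_{\zetabar}$ on the product space), and Proposition~\ref{t:msr cp} together with hypotheses~\ref{t:cp ncvx ii}--\ref{t:cp ncvx iv} supplies the metric subregularity needed for Theorem~\ref{t:metric subreg convergence}. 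The extra bookkeeping you flag in Steps~1 and~3 is handled the same way (and rather more tersely) in the paper.
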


\begin{proof}
The neighborhood $U$ can be replaced by an enlargement of $S$, hence the result follows from Theorem \ref{t:metric subreg convergence} once 
it can be shown that the assumptions are satisfied for the mapping $T_{\zetabar}$
on the product space $\Ebb^m$ restricted to $\Lambda$.  
%
To see that Assumption~\ref{t:metric subreg convergence a} of Theorem \ref{t:metric subreg convergence}
is satisfied, 
note first that, by condition \eqref{e:S cond} and definition \eqref{e:Sj}, 
$P_{\Omega_j}S_{j+1}\subset S_j$.  
This together with condition \eqref{e:Uj} and Assumption~\ref{t:cp ncvx i} allow one to 
conclude from Theorem \ref{t:subreg proj-ref}\ref{t:subreg proj-ref2b} that  the projector $P_{\Omega_j}$ is 
pointwise almost firmly nonexpansive at each $y_j\in S_j$ with violation 
$
 \varepsilontilde_j
$
on $U_j$ given by \eqref{e:e a cp}.  
Then by Proposition \ref{t:av-comp av}\ref{t:av-comp av iii} the cyclic projections mapping $P_0$ is 
pointwise almost averaged at each $y_1\in S_1$ with violation $\varepsilonbar$ and averaging constant 
$\alpha$ given by \eqref{e:e a cp} on $U_1$.  Since $T_{\zetabar}$ is just $P_0$
shifted by $\zetabar$ on the product space, it follows that 
$T_{\zetabar}$ is pointwise almost averaged at each
$y\in S\equiv S_1\times S_2\times\cdots\times S_m$ with the same violation $\varepsilonbar$
and averaging constant $\alpha$ on $U$. 

Assumption~\ref{t:metric subreg convergence b} of Theorem \ref{t:metric subreg convergence}
for $\Phi_{\zetabar}$
follows from Assumptions~\ref{t:cp ncvx ii}-\ref{t:cp ncvx iv} and Proposition~\ref{t:msr cp}. 
This completes the proof.
\end{proof}

\begin{cor}[global $R$-linear convergence of convex cyclic projections]\label{t:cp cvx}
Let the sets $\Omega_j$ ($j=1,2,\dots,m$) be nonempty, closed and convex, let 
$S_0=\Fix P_0\neq \emptyset$ and let $S=S_1\times S_2\times \cdots\times S_m$ for $S_j$ 
defined by \eqref{e:Sj}.  
Let $\Lambda\equiv W(\zetabar)$ for $\zetabar\in \Zcal(u)$ and any $u\in S_0$.   
Suppose, in addition, that 
\begin{enumerate}
\item[(b$'$)]\label{t:cp cvx ii} for each $\xhat = (\xhat_1, \xhat_2, \dots, \xhat_m)\in S$, the collection of sets 
$\{\Omega_1, \Omega_2, \dots, \Omega_m\}$ is subtransversal at $\xhat$ for $\zetabar= \xhat-\Pi\xhat$
relative to $\Lambda$ with neighborhood $U\supset S$;
 \item[(c$'$)]\label{t:cp cvx iii} 
there exists a positive constant $\sigma$ such that 
\[
   \dist\paren{\zetabar, \Psi(x)}\leq \sigma \dist(0, \Phi_{\zetabar}(x))
\]
holds whenever $x\in \Lambda\cap U$ with $x_1\in\Omega_1$.
\end{enumerate}
Then the sequence $\paren{x^k}_{k\in \Nbb}$ generated by $x^{k+1}\in T_{\zetabar}x^k$
seeded by any point $x^0\in W(\zetabar)$ 
with $x_1^0\in \Omega_1$ satisfies
\[
 \dist\paren{x^{k+1}, \Fix T_{\zetabar}\cap S}\leq c\dist(x^k, S)
\]
for all $k$ large enough where 
\begin{equation}
c\equiv\sqrt{1-\frac{1-\alpha}{\alpha\kappabar^2}} <1
\end{equation}%
with 
 $\kappabar=\kappa\sigma$ for $\kappa$ a constant of metric subregularity of 
 $\Psi$ for $\zetabar$ on $U$ relative to $\Lambda$ and $\alpha$ given by \eqref{e:e a cp}.  In other words,  
$\dist\paren{x^{k}, \Fix T_{\zetabar}\cap S}\to 0$, and hence 
$ \dist\paren{x_1^{k}, \Fix P_0\cap S_0}\to 0$,  at least R-linearly with rate $c<1$. 
\end{cor}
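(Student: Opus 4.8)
The plan is to deduce this from Theorem~\ref{t:cp ncvx} by verifying that convex data trivialises hypotheses \ref{t:cp ncvx i} and \ref{t:cp ncvx iv}, and then to bridge the resulting \emph{local} contraction to the asserted \emph{global} statement using the classical convergence of convex cyclic projections. First I would record the simplifications coming from convexity. By Proposition~\ref{t:set regularity equivalences}\ref{t: convex set regularity} each $\Omega_j$ is elementally regular at every point for every normal-cone pair with constant $0$ and neighbourhood $\Ebb$, so hypothesis \ref{t:cp ncvx i} holds with $\varepsilon_j=0$ and $U_j=\Ebb$; hence in \eqref{e:e a cp} every $\varepsilontilde_j=0$ and $\varepsilonbar=0$, and the neighbourhood conditions \eqref{e:neighborhood relations} are vacuous. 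Equivalently, and this is all that is really used, each $P_{\Omega_j}$ is globally firmly nonexpansive, so by Proposition~\ref{t:av-comp av}\ref{t:av-comp av iii} the operator $P_0$, and therefore $T_{\zetabar}$ on $\Lambda=W(\zetabar)$, is globally averaged with the constant $\alpha=m/(m+1)$ of \eqref{e:e a cp} and no violation. Moreover, by Lemma~\ref{l:unique cycles} the difference vector is unique and independent of the base point, so $\Zcal(u)=\{\zetabar\}$ for every $u\in S_0=\Fix P_0$; this makes $\Lambda=W(\zetabar)=\aff\!\paren{\cup_{\zeta\in Z}W(\zeta)}$ an affine subspace with $\mmap{T_{\zetabar}}{\Lambda}{\Lambda}$ (Lemma~\ref{l:W relations}\ref{l:W relations -1}) and guarantees $\zetabar\in\Zcal(\ubar)$ at every $\ubar\in\Fix P_0$, so hypotheses (b$'$), (c$'$) are available at every point of $\Fix T_{\zetabar}$.

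Next I would supply the stability ingredient. Hypotheses (b$'$) and (c$'$) are precisely the specialisations of \ref{t:cp ncvx ii} and \ref{t:cp ncvx iii} to the present $\Lambda$ and the unique $\zetabar$; combining them with Proposition~\ref{t:msr cp} yields that $\Phi_{\zetabar}\equiv T_{\zetabar}-\Id$ is metrically subregular for $0$ on $U$ relative to $\Lambda$ with constant $\kappabar=\kappa\sigma$ (for $x\in U\cap\Lambda$ with $x_1\in\Omega_1$, which every iterate from step $1$ on, and $x^0$ by hypothesis, satisfies). Hypothesis \ref{t:cp ncvx iv} is automatic: since $\Lambda=W(\zetabar)$ one has $\Fix T_{\zetabar}\subseteq S$, so $\Phi_{\zetabar}^{-1}(0)\cap\Lambda=\Fix T_{\zetabar}=S\cap\Lambda$ and hence $\dist(x,S)\le\dist\paren{x,\Phi_{\zetabar}^{-1}(0)\cap\Lambda}$ for all $x\in\Lambda$. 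Because $\varepsilonbar=0$ the threshold $\sqrt{(1-\alpha)/(\varepsilonbar\alpha)}$ is $+\infty$, so $\kappabar$ is automatically admissible; feeding all of this into Theorem~\ref{t:cp ncvx} (equivalently, Corollary~\ref{t:str metric subreg convergence} applied to $T_{\zetabar}$ on $\Lambda$ with fixed-point set $\Fix T_{\zetabar}$) yields the one-step estimate $\dist\paren{x^{k+1},\Fix T_{\zetabar}\cap S}\le c\,\dist\paren{x^k,S}$ with $c=\sqrt{1-\tfrac{1-\alpha}{\alpha\kappabar^2}}<1$ for every iterate lying in $U$.

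Finally I would close the gap between ``$x^k\in U$'' and ``for all $k$ large enough''. Since $P_0$ is averaged and $\Fix P_0\neq\emptyset$, the Picard iteration $x_1^{k+1}=P_0x_1^k$ converges to some $u^\infty\in\Fix P_0$ (e.g.\ \cite[Theorem~3]{Opial67}); as $x^k=\paren{x_1^k,\,x_1^k-\zetabar_1,\dots,\,x_1^k-\sum_{i=1}^{m-1}\zetabar_i}\in W(\zetabar)$ for $k\ge1$, the product-space iterate converges to a point of $\Fix T_{\zetabar}\subseteq S\subseteq U$, so $x^k\in U$ for all $k$ large and the one-step estimate applies from that index on. To match the stated form of the conclusion, note that for any $x\in W(\zetabar)$, using $S_j=S_0-\sum_{i<j}\zetabar_i$ and $x_j=x_1-\sum_{i<j}\zetabar_i$, one has $\dist(x,S)=\sqrt{m}\,\dist(x_1,\Fix P_0)=\dist(x,\Fix T_{\zetabar})=\dist(x,\Fix T_{\zetabar}\cap S)$; hence the estimate reads $\dist\paren{x^{k+1},\Fix T_{\zetabar}\cap S}\le c\,\dist\paren{x^k,\Fix T_{\zetabar}\cap S}$ for $k$ large, i.e.\ $\dist\paren{x^k,\Fix T_{\zetabar}\cap S}\to0$ $Q$-linearly, hence $R$-linearly, with rate $c$, and dividing by $\sqrt m$ gives $\dist\paren{x_1^k,\Fix P_0\cap S_0}\to0$ at the same rate.

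The hard part is not any single estimate but the bookkeeping forced by working on the affine subspace $\Lambda=W(\zetabar)$: one must keep straight that $S\not\subseteq\Lambda$ in general, yet $\Fix T_{\zetabar}=S\cap\Lambda$, the orbit stays in $W(\zetabar)$, and therefore distances to $S$, to $S\cap\Lambda$ and to $\Fix T_{\zetabar}$ all agree along the orbit; one must also check that the ``$x_1\in\Omega_1$'' qualifier carried through (c$'$) and Proposition~\ref{t:msr cp} is compatible with the iterates. The secondary difficulty is the passage from the $U$-local contraction to the global claim, which is exactly why the statement is only ``for all $k$ large enough'' and merely $R$-linear; were (b$'$) to hold with $U=\Ebb^m$, the estimate would be valid from the first step.
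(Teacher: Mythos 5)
Your proposal is correct and follows the same route as the paper: both proofs reduce the corollary to Theorem~\ref{t:cp ncvx} by using convexity to kill the violations (elemental regularity with $\varepsilon_j=0$ and $U_j=\Ebb$, hence $\varepsilonbar=0$ and an unconstrained admissible $\kappabar$), Lemma~\ref{l:unique cycles} for uniqueness of the difference vector, and the identification $\Phi_{\zetabar}^{-1}(0)\cap\Lambda=\Fix T_{\zetabar}=S\cap\Lambda$ to dispose of hypothesis \ref{t:cp ncvx iv}. Two small points of comparison. First, you call the conditions \eqref{e:neighborhood relations} ``vacuous''; only \eqref{e:Uj} is vacuous when $U_j=\Ebb$, whereas \eqref{e:S cond} still has content and the paper verifies it explicitly via $P_{\Omega_j}(S_{j+1})=S_j$ (immediate from single-valuedness of convex projectors and uniqueness of $\zetabar$, but not nothing) --- your fallback remark that global firm nonexpansiveness is ``all that is really used'' shows you see why this is harmless, but if you invoke Theorem~\ref{t:cp ncvx} as a black box you should check its hypothesis rather than declare it empty. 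Second, and in your favor: the paper's proof ends with ``follows immediately from Theorem~\ref{t:cp ncvx},'' which strictly speaking only yields the contraction estimate while the iterates lie in the neighborhood $U$ of $S$; your additional step --- using classical Krasnoselskii--Mann/Opial convergence of the globally averaged $P_0$ to argue that the orbit from an arbitrary admissible seed eventually enters $U$, together with the identity $\dist(x,S)=\dist(x,\Fix T_{\zetabar})$ along $W(\zetabar)$ --- is exactly the bridge needed to justify the ``global, for all $k$ large enough, $R$-linear'' wording of the statement, and it is left implicit in the paper. No gaps.
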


\begin{proof}
	By Lemma \ref{l:unique cycles},  $ Z =\Zcal(u)=\{\zetabar\}$ for any $u\in S_0$.  Moreover, 
	since $\Omega_j$ is convex, the projector is single-valued and firmly nonexpansive, and further the
	conditions \eqref{e:neighborhood relations} are satisfied with $U_j=\Ebb$ 
	($j=1,2,\dots,m$)
	since $S_0=S_1$ and
\begin{equation}%
P_{\Omega_j}\paren{S_1 - \sum_{i=1}^{j}\zetabar_j} = P_{\Omega_j}\paren{S_{j+1}} = S_j =
S_1- \sum_{i=1}^{j-1}\zetabar_j
~
\mbox{ for each } ~j=1,2\dots,m.
\end{equation}%
	Also by convexity, $\Omega_j$ ($j=1,2,\dots,m$) 
	is elementally regular with constant $\varepsilon_j=0$ globally ($U_j=\Ebb$), so 
	Assumption \ref{t:cp ncvx i} of Theorem~\ref{t:cp ncvx} is satisfied.  Moreover, 
	$\Phi_\zetabar^{-1}(0)=S_0$ so condition \ref{t:cp ncvx iv} holds trivially.  
	The result then follows immediately from Theorem \ref{t:cp ncvx}.
\end{proof}

When the sets $\Omega_j$ are affine, then it is easy to see that 
the sets are subtransversal to each other at 
collections of nearest points corresponding to the gap between the sets.  If the cyclic 
projection algorithm does not converge in one step (which it will in the case of either parallel or 
orthogonally arranged sets) the above corollary shows that cyclic projections converge linearly 
with rate $\sqrt{1-\kappa}$ where $\kappa$ 
is the constant of metric subregularity, reflecting the angle between the affine subspaces.  This much 
for the affine case has already been shown in \cite[Theorem~5.7.8]{BauBorLew97}.  
\begin{remark}[global convergence for nonconvex alternating projections]\label{r:ap saf}
 Convexity is not necessary for global linear convergence of alternating projections.  This 
 has been demonstrated using earlier versions of the theory presented here 
 for sparse affine feasibility in 
 \cite[Corollary III.13 and Theorem III.15]{HesseLukeNeumann14}.  
 A sufficient property for global results in sparse affine feasibility is 
 a common {\em restricted isometry property}  \cite[Eq. (32)]{HesseLukeNeumann14}
 familiar to experts in signal processing with sparsity constraints.  
 The restricted isometry 
 property was shown in  \cite[Proposition III.14]{HesseLukeNeumann14} to imply  {\em transversality} of the affine subspace with 
 all subspaces of a certain dimension.
\end{remark}

\begin{eg}[an equilateral triangle -- three affine subspaces with a hole]\label{eg:triangle}
Consider the problem specified by the following three sets in $\mathbb{R}^2$
\begin{alignat*}{2}
  \Omega_1 &= \mathbb{R}(1,0) &&= \{x\in\mathbb{R}^2\mid \langle (0,1),x\rangle =0\},  \\
  \Omega_2 &= (0,-1) + \mathbb{R}(-\sqrt{3},1) &&= \{x\in\mathbb{R}^2\mid \langle (-\sqrt{3},1),x\rangle =\sqrt{3}\}, \\
  \Omega_3 &= (0,1) + \mathbb{R}(\sqrt{3},1) &&= \{x\in\mathbb{R}^2\mid \langle (\sqrt{3},1),x\rangle =1\}. 
\end{alignat*}
The following statements regarding the assumptions of Corollary~\ref{t:cp cvx} are easily verified.
\begin{enumerate}[(i)]
\item The set $S_0=\Fix P_0=\{ (-1/3,0) \}$.
\item There is a unique fixed point $\xbar = (\xbar_1,\xbar_2,\xbar_3) = \paren{\paren{-1/3,0},\paren{-1/3,2/\sqrt{3}},\paren{2/3,1/\sqrt{3}}}$.
\item The set of difference vectors is a singleton: 
\[ 
 Z  =\klam{\zetabar}= \klam{(\zetabar_1, \zetabar_2, \zetabar_3)} = \klam{  \left( (0,-2/\sqrt{3}), (-1,1/\sqrt{3}), (1,1/\sqrt{3})\right)}.
\]
\item The sets $S_1,\,S_2$ and $S_3$ are given by
   \begin{alignat*}{2}
    S_1 &= S_0-\zetabar_1 &&= \klam{(-1/3, 2/\sqrt{3})}\\
    S_2 &= S_0-\zetabar_1-\zetabar_2 &&= \klam{(2/3, 1/\sqrt{3})}\\
    S_3 &= S_0 &&= \klam{(-1/3,0)}.
    \end{alignat*}
\item Condition \eqref{e:S cond} is satisfied and condition \eqref{e:Uj} is satisfied  with $U_j=\Rtw$ ($j=1,2,3$).
\item  For $j\in\{1,2,3\}$, $\Omega_j$ is convex and hence elementally regular at $\xbar_j$ with constant $\varepsilon_j=0$ 
    \cite[Proposition~4]{KruLukNgu16}.
\item The mapping $\Psi$ is metrically subregular for $\zetabar$ on $\paren{\Rtw}^3$ with constant $\kappa=\sqrt{2}$ relative to $W(\zetabar)$: 
	\[
	    \dist\paren{x,\Psi^{-1}(\zetabar)\cap W(\zetabar)} \leq \sqrt{2}\, \dist\paren{\zetabar,\Psi(x)} \quad\forall x\in \paren{\Rtw}^3.
	\]
\item For all $x\in W(\zetabar)$, 
	the inequality
 $\dist(\zetabar,\Psi(x)) \leq \sigma\, \dist(0,\Phi_{\zetabar}(x))$ 
  holds with $\sigma = 4\sqrt{2}/9$.
\end{enumerate}
 The assumptions of Corollary~\ref{t:cp cvx} are satisfied. Furthermore, Proposition~\ref{t:msr cp} shows that the 
mapping $\Phi_{\zetabar}$ is metrically subregular for $0$ on $\paren{\Rtw}^3$ relative to $W(\zetabar)$ with constant 
$\kappabar=\kappa\sigma=\sqrt{2}\times 4\sqrt{2}/9=8/9.$  Altogether, 
Corollary~\ref{t:cp cvx} yields that, from any starting point,  the cyclic projection method converges linearly to $\ubar$ 
with rate at most  $c=\sqrt{37}/8$.
\end{eg}

The next example is new and rather unexpected. 
\begin{eg}[two non-intersecting circles]\label{eg:circles}
 Fix $r>0$ and consider the problem specified by the following two sets in $\mathbb{R}^2$
  \begin{equation}%
  	\begin{split}
     \Omega_1 &= \{x\in\mathbb{R}^2\mid \|x\|=1\}, \\
     \Omega_2 &= \{x\in\mathbb{R}^2\mid \|x+(0,1/2+r)\|=2+r\}.
   \end{split}
  \end{equation}%
 In this example we focus on (local) behavior around the point $\ubar=(0,1)$. For $U_1$, 
a sufficiently small neighborhood of $\ubar$, the following statements regarding the assumptions of Theorem~\ref{t:cp ncvx} can be verified.
 \begin{enumerate}[(i)]
 	\item $S_0 = \Fix P_0\cap U_1 = \{\ubar\}=\{(0,1)\}$;
 	\item $\xbar=\paren{\xbar_1,\xbar_2}=\paren{\ubar,(0,3/2)}=\paren{(0,1),(0,3/2)}$;
 	\item $\mathcal{Z}= \{\zetabar\} = \{(\zetabar_1,\zetabar_2)\} = \{\left( (0,-1/2), (0,1/2) \right)\}$;
 	\item the sets $S_1$ and $S_2$ are given by
 	        \begin{alignat*}{2}
		 		S_1&=S_0-\zetabar_1 &&= \klam{(0,1/2)}\\
		 		S_2&=S_0-\zetabar_1-\zetabar_2 &&= \klam{(0,1)};
		 	\end{alignat*}
 	\item  \eqref{e:S cond} is satisfied, and \eqref{e:Uj} holds with $U_1$ already given and $U_2$ equal to a 
 	scaled-translate of $U_1$-- more precisely,  $U_1$ and $U_2$ are related by
 	 $$ U_2 = \frac{2+r}{\dist\paren{\ubar,(0,-\frac{1}{2}-r)}}\, U_1 + (0,1/2); $$
 	 
 	\item $L=\mathbb{R}^2\times\mathbb{R}^2$;
 	\item for $j\in\{1,2\}$, $\Omega_j$ is uniformly elementally regular at $\xbar_j$ for any $\varepsilon_j\in(0,1)$ \cite[Example~2(b)]{KruLukNgu16};
 	\end{enumerate}
 	   In order to verify the remaining conditions of Theorem~\ref{t:cp ncvx}, we use the following parametrization: any double 
	    $x=(x_1,x_2)\in W(\zetabar)$ with $x_1\in\Omega_1$ may be expressed in the form $x_1=(b,\sqrt{1-b^2})\in\Omega_1$ 
	    where $b\in\mathbb{R}$ is a parameter.
 	\begin{enumerate}[(i),resume]
 	\item \label{ex2:subtrans} $\{\Omega_1,\Omega_2\}$ is subtransversal at $\bar{x}$ relative to $W(\bar\zeta)$, {\em i.e.,} $\Psi$ 
 	is metrically subregular at $\xbar$ for $\zetabar$ on $U$ (metrically regular at $(\bar{x}, \zetabar)$ on $U\times\{\zetabar\}$) 
 	relative to $W(\bar\zeta)$  with constant 
 	\[
 	\kappa\lim_{b\to 0}\frac{\dist\paren{x,\Psi^{-1}(\zetabar)\cap W(\zetabar)}}{\dist\paren{\zetabar,\Psi(x)}} = \frac{3(2r+3)}{\sqrt{2r^2+6r+9}}.
 	\]
 	  \item For any $\rho>0$ such that
 	   	  \begin{align*}
 	   	  	\rho > \lim_{b\to 0}\frac{\dist(\zetabar,\Psi(x))}{\dist(0,\Phi_{\zetabar}(x))} =
 	   	  	\frac{\sqrt{2} \sqrt{2 \, r^{2} + 6 \, r + 9}\, {\left(2 \, r +
 	   	  			3\right)}}{2 \, \sqrt{4 \, r^{2} + 12 \, r + 13}\, {\left(r + 2\right)}},
 	   	  \end{align*}
 	  the following inequality holds
 	  $$\dist(\zetabar,\Psi(x)) \leq \rho\, \dist(0,\Phi_{\zetabar}(x))$$  
 	  for all $x\in W(\zetabar)$ sufficiently close to $\xbar$.

 \end{enumerate}
  The assumptions of Theorem~\ref{t:cp ncvx} are satisfied. Furthermore, the proof of Proposition~\ref{t:msr cp} shows that 
  the mapping $\Phi_{\zetabar}$ is metrically subregular at $\xbar$ for $0$ relative to $W(\zetabar)$ on $U$ with the constant 
  $\kappabar$ equal to the product of constant of subtransversality $\kappa$ in \ref{ex2:subtrans} and $\rho$. That is,
  $$\kappabar = \frac{3 \sqrt{2} {\left(2 \, r +
  		3\right)^2}}{2 \, \sqrt{4 \, r^{2} + 12 \, r + 13}\, {\left(r + 2\right)}} . $$ 
  Altogether, Theorem~\ref{t:cp ncvx} yields that, for any $c$ with
  $$1>c>\sqrt{1 -        	
  	\frac{{\left(4 \, r^{2} + 12 \, r + 13\right)} {\left(r +
  			2\right)}^{2}}{9 \, {\left(2 \, r + 3\right)}^{4}} },$$
  there exists a neighborhood of $\bar{u}$ such that the cyclic projection method converges linearly to $\ubar $ with rate~$c$. 
  
%
%
%
\end{eg} 
\begin{remark}[non-intersecting circle and line]
 A similar analysis to Example \ref{eg:circles} can be performed for the case in which the second circle $\Omega_2$ is replaced with the line 
$(0,3/2)+\mathbb{R}(1,0)$. 
Formally, this corresponds to setting the parameter $r=+\infty$ in Example \ref{eg:circles}. Although there are some technicalities involved 
in order to make such an argument fully rigorous, a separate computation has verified the constants obtained in this way agree 
with those obtained from a direct computation.  When the circle and line are tangent, then Example \ref{eg:4.4} shows how 
sublinear convergence of alternating projections can be quantified.
\end{remark}

\begin{eg}[phase retrieval]\label{eg:pr cp}
 In the discrete version of the phase retrieval problem 
\cite{Luke02a, BCL1, BurkeLuke03, Luke05a, Luke12, HLST, Luke17}, the constraint sets are of the form
 \begin{equation}\label{e:pr}
\Omega_j=\set{x\in\Cn}{|(A_jx)_k|^2=b_{jk}, ~k=1,2,\dots, n},
 \end{equation}
where $\map{A_j}{\Cn}{\Cn}$ is a unitary linear operator (a Fresnel or Fourier transform depending on whether the 
data is in the near field or far field respectively)  for $j=1,2,\ldots, m$, 
possibly together with an additional support/support-nonnegativity constraint, $\Omega_0$.  It is elementary to show that 
the sets $\Omega_j$ are elementally {\em regular} (indeed, they are semi-algebraic \cite[Proposition 3.5]{HLST} and 
prox-regular \cite[Proposition 4]{KruLukNgu16}, and $\Omega_0$ is convex) so condition \ref{t:cp ncvx i}
of Theorem \ref{t:cp ncvx} is satisfied for each $\Omega_j$ with some violation $\varepsilon_j$ 
on  local neighborhoods.  Subtransversality of the collection of sets at 
a fixed point $\xbar$ of $P_0$ can only be violated when the sets are locally 
parallel at $\xbar$ for the corresponding 
difference vector.  It is beyond the focus of this paper to show that this cannot happen in almost all 
practical instances, establishing that condition \ref{t:cp ncvx ii} of Theorem \ref{t:cp ncvx} holds.  
The remaining conditions \ref{t:cp ncvx iii}-\ref{t:cp ncvx iv} are technical and Example \ref{eg:circles} --
which essentially captures the geometry of the sets in the phase retrieval problem -- shows that 
these assumptions are satisfied.  Theorem \ref{t:cp ncvx} then shows that 
near {\em stable} fixed points (defined as those which correspond to local best approximation points 
\cite[Definition 3.3]{Luke08}) the method of alternating projections {\em must} converge linearly.  
In particular, the cyclic projections algorithm can be expected to converge linearly on neighborhoods of 
stable fixed points {\em regardless of whether or not the phase sets intersect}.
This improves, in several ways, the local linear convergence result obtained in 
\cite[Theorem 5.1]{Luke12} which established local linear convergence of {\em approximate} 
alternating projections to local solutions with more general gauges for the case of two sets:  
first, the present theory handles more than two sets, which is relevant for wavefront sensing \cite{Luke02a,HLST};  
secondly, it does not require that the intersection of the constraint sets (which are expressed in terms of noisy, incomplete 
measurement data) be nonempty.  This is in contrast to recent studies of the phase retrieval 
problem (of which there are too many to cite here) which require the assumption of feasibility, 
despite evidence, both numerical and experimental, to the contrary.  
Indeed,  according to elementary noncrystallographic diffraction theory, since the experimental measurements -- 
the constants $b_{jk}$ in the sets $\Omega_j$ 
defined in \eqref{e:pr} -- are finite samples of 
the continuous Fourier/Fresnel transform, 
there can be no overlap between the set of points satisfying the measurements and the set 
of compactly supported objects specified by the constraint $\Omega_0$.  
Adding another layer to this fundamental inconsistency is the fact that the measurements are 
noisy and inexact.  The presumption that these sets have nonempty intersection is neither reasonable nor 
necessary.  Regarding  approximate/inexact evaluation
of the projectors studied in \cite{Luke12}, we see no obvious impediment to such an extension 
and this would indeed be a valuable endeavor, again, beyond the 
scope of this work.  Toward global convergence results, Theorem~\ref{t:cp ncvx} indicates that the focus
of any such effort should be on determining when the set of {\em difference vectors} is unique rather than 
focusing on uniqueness of the intersection as proposed in \cite{CandesEldarStrohmerVononinski}
and \cite{HesseLukeNeumann14}.  
\end{eg}

\subsection{Structured (nonconvex) optimization}\label{s:str opt}
We consider next the problem 
\begin{equation*}\tag{$\Pcal$}\label{e:smooth-nonsmooth}
   \ucmin{f(x)+g(x)}{x\in\Ebb}
\end{equation*}
under different assumptions on the functions $f$ and $g$.  At the very least, we will assume that these
functions are proper, lower semi-continuous (l.s.c.) functions.   
\subsubsection{Forward-backward}\label{s:fb}
We begin with the ubiquitous {\em forward-backward} algorithm: given $x^0\in \Ebb$, 
generate the sequence $\paren{x^k}_{k\in\Nbb}$ via 
\begin{equation}\label{e:fb}
   x^{k+1}\in T_{\rm FB}(x^k)\equiv \prox_{1,g}\paren{x^k-t\nabla f(x^k)}.
\end{equation}
We keep the step-length fixed for simplicity.  
This is a reasonable strategy, obviously, when $f$ is continuously differentiable with 
Lipschitz continuous gradient and when $g$ is convex (not necessarily smooth), 
which we will assume throughout this subsection. 
For the case that $g$ is the indicator function of a set $C$, that is $g=\iota_C$, then \eqref{e:fb} is 
just the projected gradient algorithm for constrained optimization with a smooth objective.  
For simplicity, we will take the proximal parameter $\lambda=1$ and use the notation $\prox_{g}$
instead of $\prox_{1,g}$.  The following discussion uses the property of hypomonotonicity 
(Definition \ref{d:sub/hypomonotone}\ref{d:p-hypomonotone}).
%
	\begin{proposition}[almost averaged: steepest descent] \label{t:gradient averaged}
		Let $U$ be a nonempty open subset of $\Rbb^{n}$.
		Let $\map{f}{\Rbb^{n}}{\Rbb}$ 
		be a continuously differentiable function with calm gradient at $\xbar$ and
		calmness modulus $L$ on the neighborhood $U$ of $\xbar$.  In addition, let $\nabla f$ 
		be pointwise hypomonotone at $\xbar$
		with violation constant $\tau$ 
		on $U$. Choose $\beta>0$ and let $t\in(0,\beta)$. Then the mapping $T_{t,f} :=
		\Id - t\nabla f$ is pointwise almost averaged at $\xbar$
		with averaging constant $\alpha = t/\beta 
		\in \left(0 , 1\right)$ and  violation constant 
		$\varepsilon =  \alpha(2\beta\tau + \beta^{2}L^{2})$ on $U$. 
		If $\nabla  f$ is pointwise 
		strongly monotone at $\xbar$
		with modulus $|\tau| > 0$ (that is, pointwise hypomonotone with constant $\tau<0$) 
		and calm with modulus $L$ on $U$ 
		and $t < 2|\tau|/L^{2}$, then $T_{t,f}$ is pointwise averaged at $\xbar$ 
		with averaging constant $\alpha = tL^{2}/\left(2|\tau|\right) \in (0,1)$ on $U$.
	\end{proposition}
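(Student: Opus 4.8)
The plan is to verify that the map $T_{t,f} = \Id - t\nabla f$ satisfies the defining inequality of Proposition~\ref{t:average char}\ref{t:average char iii} relative to the reference point $\xbar$, with the stated averaging constant $\alpha = t/\beta$ and violation $\varepsilon = \alpha(2\beta\tau + \beta^2 L^2)$. Concretely, I would fix $x \in U$, set $x^+ = T_{t,f}(x) = x - t\nabla f(x)$ and $\xbar^+ = \xbar - t\nabla f(\xbar)$, and aim to show
\[
\norm{x^+ - \xbar^+}^2 \le (1+\varepsilon)\norm{x-\xbar}^2 - \frac{1-\alpha}{\alpha}\norm{(x-x^+)-(\xbar-\xbar^+)}^2.
\]
Since $x - x^+ = t\nabla f(x)$ and $\xbar - \xbar^+ = t\nabla f(\xbar)$, the displacement term is $t^2\norm{\nabla f(x) - \nabla f(\xbar)}^2$, and the coefficient $\tfrac{1-\alpha}{\alpha} = \tfrac{\beta - t}{t}$, so that term equals $t(\beta-t)\norm{\nabla f(x)-\nabla f(\xbar)}^2$.

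First I would expand the left side:
\[
\norm{x^+ - \xbar^+}^2 = \norm{x-\xbar}^2 - 2t\ip{\nabla f(x) - \nabla f(\xbar)}{x-\xbar} + t^2\norm{\nabla f(x) - \nabla f(\xbar)}^2.
\]
Adding back $t(\beta - t)\norm{\nabla f(x) - \nabla f(\xbar)}^2$ gives a combined displacement coefficient of $t^2 + t(\beta - t) = t\beta$. Then I would invoke the two hypotheses on $\nabla f$: pointwise hypomonotonicity at $\xbar$ with constant $\tau$ gives $-\ip{\nabla f(x) - \nabla f(\xbar)}{x-\xbar} \le \tau\norm{x-\xbar}^2$, so $-2t\ip{\cdots}{\cdots} \le 2t\tau\norm{x-\xbar}^2$; calmness of $\nabla f$ with modulus $L$ gives $\norm{\nabla f(x) - \nabla f(\xbar)}^2 \le L^2\norm{x-\xbar}^2$, so $t\beta\norm{\nabla f(x) - \nabla f(\xbar)}^2 \le t\beta L^2\norm{x-\xbar}^2$. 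Collecting,
\[
\norm{x^+ - \xbar^+}^2 + t(\beta-t)\norm{\nabla f(x) - \nabla f(\xbar)}^2 \le \bigl(1 + 2t\tau + t\beta L^2\bigr)\norm{x-\xbar}^2,
\]
and since $2t\tau + t\beta L^2 = \tfrac{t}{\beta}(2\beta\tau + \beta^2 L^2) = \alpha(2\beta\tau + \beta^2 L^2) = \varepsilon$, this is exactly the desired inequality, establishing the first claim (one should note $\alpha = t/\beta \in (0,1)$ since $t \in (0,\beta)$, and $\varepsilon \ge 0$ needs $\tau$ not too negative, but $\varepsilon \in [0,1)$ is a constraint the reader imposes by choosing $\beta$ appropriately; I would be careful to state this honestly, since the definition of almost averaged requires $\varepsilon \in [0,1)$ — though the proposition as stated does not claim $\varepsilon < 1$, only gives its value).

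For the second claim, suppose $\nabla f$ is pointwise strongly monotone at $\xbar$ with modulus $|\tau|$, i.e.\ hypomonotone with constant $\tau < 0$, and calm with modulus $L$, and $t < 2|\tau|/L^2$. The idea is to repeat the computation but now push the negative term $2t\tau\norm{x-\xbar}^2 = -2t|\tau|\norm{x-\xbar}^2$ to \emph{absorb} part of the displacement rather than leaving a positive violation. That is, I would choose the averaging constant $\alpha = tL^2/(2|\tau|)$, which lies in $(0,1)$ precisely because $t < 2|\tau|/L^2$, and verify the inequality of Proposition~\ref{t:average char}\ref{t:average char iii} with $\varepsilon = 0$. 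With this $\alpha$, $\tfrac{1-\alpha}{\alpha}\norm{(x-x^+)-(\xbar-\xbar^+)}^2 = t\bigl(\tfrac{2|\tau|}{L^2} - t\bigr)\norm{\nabla f(x) - \nabla f(\xbar)}^2$, so the combined displacement coefficient after moving it to the left becomes $t^2 + t(\tfrac{2|\tau|}{L^2} - t) = 2t|\tau|/L^2$. Then $2t|\tau|/L^2 \cdot \norm{\nabla f(x) - \nabla f(\xbar)}^2 \le 2t|\tau|\norm{x-\xbar}^2$ by calmness, which exactly cancels the $-2t|\tau|\norm{x-\xbar}^2$ coming from strong monotonicity, leaving $\norm{x^+-\xbar^+}^2 + \tfrac{1-\alpha}{\alpha}\norm{\cdots}^2 \le \norm{x-\xbar}^2$, i.e.\ pointwise averagedness at $\xbar$ with no violation. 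The main thing to be careful about is the bookkeeping of which inequality (hypomonotonicity vs.\ calmness) is applied to which term and in which direction, and ensuring the chosen $\alpha$ genuinely lies in $(0,1)$ under the stated step-size restriction; there is no deep obstacle, it is a matter of organizing the algebra cleanly and citing Proposition~\ref{t:average char}\ref{t:average char iii} for the equivalence with the averaging definition.
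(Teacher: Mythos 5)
Your proof is correct and follows essentially the same route as the paper's: both arguments expand $\norm{x^+-\xbar^+}^2$, apply pointwise hypomonotonicity to the cross term and calmness to the gradient-difference term, and reduce the claim to Proposition~\ref{t:average char} (you verify characterization \ref{t:average char iii} directly, while the paper verifies the equivalent characterization via almost nonexpansiveness of $\Id-\beta\nabla f$ after writing $T_{t,f}=(1-\alpha)\Id+\alpha(\Id-\beta\nabla f)$ — the algebra is identical). Your choice $\alpha=tL^2/(2|\tau|)$ in the strongly monotone case corresponds exactly to the paper's choice $\beta=2|\tau|/L^2$, and your parenthetical caveat about $\varepsilon$ needing to lie in $[0,1)$ is a fair observation that the paper leaves implicit.
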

	\begin{proof}
	    Noting that 
  		\begin{equation}%
   			\Id - \paren{\alpha\beta}\nabla  f = \paren{1 - \alpha}\Id + \alpha\paren{\Id - \beta
   			\nabla  f},
		\end{equation}%
   		by definition, $T_{t,f} = \Id - \paren{\alpha\beta}\nabla f$ is pointwise almost averaged
   		at $\xbar$
		with violation $\varepsilon=\alpha\left(2\beta 
		\tau + \beta^{2}L^{2}\right)$ and averaging constant $\alpha \in \left(0 , 
   		1\right)$ on $U$ if and only if $\Id - \beta\nabla  f$ is pointwise almost nonexpansive
   		at $\xbar$ with violation constant $\varepsilon/\alpha=2\beta 
		\tau + \beta^{2}L^{2}$ on $U$.

 		Define $T_{\beta,f} := \Id - \beta\nabla  f$. Then, since $f$ is 
		continuously differentiable with calm gradient at $\xbar$ and calmness modulus $L$  on $U$, and 
		the gradient $\nabla f$ is pointwise hypomonotone at $\xbar$ 
		with violation $\tau$ on $U$, 
		\begin{align}
   			\norm{T_{\beta,f}\paren{x} - T_{\beta,f}\paren{\xbar}}^{2} & = \norm{x - \xbar}
   			^{2} - 2\beta\ip{x - \xbar}{ \nabla  f\left(x\right) - \nabla  f\left(\xbar
   			\right)} + \beta^{2}\norm{\nabla  f\left(x\right) - \nabla  f\left(\xbar\right)}
   			^{2} \nonumber \\
			& \leq \paren{1 + 2\beta\tau + \beta^{2}L^{2}}\norm{x - \xbar}^{2}, \quad \forall x\in U. 
			\label{e:interim1}
		\end{align}
		This proves the first statement.
		
        In addition, if $\nabla  f$ is pointwise strongly monotone (pointwise hypomonotone with $\tau < 0$) at $\xbar$, then from
		\eqref{e:interim1}, $2\beta\tau + \beta^{2}L^{2} \leq 0$ 
		whenever $\beta \leq 2|\tau|/L^{2}$ -- that is, $T_{\beta,f}$ is 
		{\em nonexpansive} -- on $U$ where equality holds when 
		$\beta = 2|\tau|/L^{2}$.
		Choose $\beta=2|\tau|/L^{2}$ and set $\alpha = t/\beta = tL^{2}/\left(2|\tau|\right) \in (0,1)$ 
		since $t < 2|\tau|/L^{2}$.
		The first statement 		
		then yields the result for this case 
		and completes the proof.  
	\end{proof}

	Note the trade-off between the step-length and the averaging property: the smaller the step, the 
	smaller the averaging constant. In the case that $\nabla  f$ is not monotone, the violation 
	constant of nonexpansivity can also be chosen to be arbitrarily small by choosing $\beta$ 
	arbitrarily small, regardless of the size of the hypomonotonicity constant $\tau$ or the Lipschitz 
	constant $L$. This will be exploited in Theorem \ref{t:linear convergence of fb} 
	below. If $\nabla  f$ is strongly monotone, the theorem establishes an upper limit on the stepsize 
	for which nonexpansivity holds, but this does not rule out the possibility that, even for 
	nonexpansive mappings, it might be more efficient to take a larger step that technically renders 
	the mapping only {\em almost nonexpansive}. As we have seen in Theorem 
	\ref{t:metric subreg convergence}, if the fixed point set is {\em attractive} enough, then linear 
	convergence of the iteration can still be guaranteed, even with this larger stepsize.  This 
	yields a local justification of {\em extrapolation}, or excessively 
	large stepsizes.
	\begin{proposition}[almost averaged: nonconvex forward-backward] \label{t:fb averaged}
		Let $\map{g}{\Rbb^{n}}{\extre}$ be proper and l.s.c. with nonempty, pointwise submonotone subdifferential
		at all points on $S_g'\subset U_g'$ with violation $\tau_g$ on $U_g'$ in the sense of \eqref{e:submonotone}, 
		that is, at each $w\in\sd g(v)$ and $v\in S_g'$ the inequality  
		\begin{equation}
		  -\tau_g\norm{(u+z)-(v+w)}^2\leq \ip{z-w}{u-v}
		\end{equation}%
		holds whenever $z\in\sd g(u)$ for $u\in U_g'$. 
		Let $\map{f}{\Rbb^{n}}{\Rbb}$ be a continuously differentiable function with calm  
		gradient (modulus $L$) which is also pointwise hypomonotone at all $\xbar\in S_{f}\subset U_f$ with 
		violation constant $\tau_f$ on $U_{f}$. 
		Suppose that $T_{t,f}U_f\subset U_g$ where $U_g\equiv \set{u+z}{u\in U_g', z\in \sd g(u)}$ and that
		$T_{t,f}S_{f}\subset S_g$ where $S_g\equiv \set{v+w}{v\in S_g', w\in \sd g(v)}$.  
		Choose $\beta > 0$ and $t \in(0,\beta)$. Then the 
		\textit{forward-backward} mapping $T_{\rm FB} := \prox_{g}\paren{\Id - t\nabla f}$ is pointwise 
		almost averaged at all	$\xbar\in S_f$ with violation constant 
		$\varepsilon =  \paren{1+2\tau_g}\paren{1+t\paren{2\tau_f + \beta L^{2}}}-1$
		and averaging constant $\alpha$ on $U_f$ where
		\begin{equation} \label{e:alphabar}
    			\alpha = 
    			\begin{cases}
    				\frac{2}{3}, & \mbox{ for all }\alpha_{0} \leq \frac{1}{2}, \\
                \frac{2\alpha_{0}}{\alpha_{0} + 1}, & \mbox{ for all }\alpha_{0} > \frac{1}{2},
               \end{cases}
  			\quad \text{and} \quad 
  			\alpha_{0} = \frac{t}{\beta}.
		\end{equation} 
	\end{proposition}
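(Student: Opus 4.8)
The plan is to recognize $T_{\rm FB}$ as the composition $\prox_g\circ T_{t,f}$ of two pointwise almost averaged mappings and then apply the composition calculus of Proposition~\ref{t:av-comp av}\ref{t:av-comp av iii} with $m=2$. So the proof has three steps: handle the outer map $\prox_g$, handle the inner map $T_{t,f}=\Id-t\nabla f$, and combine.

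First I would deal with $\prox_g$. Since $\prox_g$ is the resolvent $J_{\sd g}=(\Id+\sd g)^{-1}$, the submonotonicity hypothesis on $\sd g$ — namely $-\tau_g\norm{(u+z)-(v+w)}^2\le\ip{z-w}{u-v}$ whenever $z\in\sd g(u)$ with $u\in U_g'$ and $w\in\sd g(v)$ with $v\in S_g'$ — is exactly property~\ref{t:submonotone} of Proposition~\ref{t:firmlynonexpansive} with $F=\sd g$, $T=\prox_g$, $U=U_g=\set{u+z}{u\in U_g',\,z\in\sd g(u)}$, $S=S_g=\set{v+w}{v\in S_g',\,w\in\sd g(v)}$, and violation $\varepsilon=2\tau_g$; note $S_g\subset U_g$ since $S_g'\subset U_g'$. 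Hence Proposition~\ref{t:firmlynonexpansive} gives that $\prox_g$ is pointwise almost firmly nonexpansive on $U_g$ at every $y\in S_g$ with violation $2\tau_g$, i.e.\ pointwise almost averaged there with averaging constant $\tfrac12$ and violation $2\tau_g$.

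Next I would deal with $T_{t,f}$. Fix any $\xbar\in S_f$. By hypothesis $f$ is $C^1$ with gradient calm at $\xbar$ with modulus $L$ on $U_f$, and $\nabla f$ is pointwise hypomonotone at $\xbar$ with violation $\tau_f$ on $U_f$; applying Proposition~\ref{t:gradient averaged} with the same $\beta>0$ and $t\in(0,\beta)$ shows that $T_{t,f}=\Id-t\nabla f$ is pointwise almost averaged at $\xbar$ on $U_f$ with averaging constant $\alpha_0=t/\beta\in(0,1)$ and violation $\alpha_0\paren{2\beta\tau_f+\beta^2L^2}=t\paren{2\tau_f+\beta L^2}$. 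As $\xbar\in S_f$ was arbitrary, $T_{t,f}$ is pointwise almost averaged on $U_f$ at all points of $S_f$ with these constants.

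Finally I would combine. The hypotheses $T_{t,f}U_f\subset U_g$ and $T_{t,f}S_f\subset S_g$, together with $S_g\subset U_g$ and $S_f\subset U_f$, are precisely the set/neighborhood inclusions required by Proposition~\ref{t:av-comp av}\ref{t:av-comp av iii}; applying it with $m=2$, $(T_1,U_1,S_1,\alpha_1,\varepsilon_1)=(\prox_g,U_g,S_g,\tfrac12,2\tau_g)$ and $(T_2,U_2,S_2,\alpha_2,\varepsilon_2)=(T_{t,f},U_f,S_f,\alpha_0,t(2\tau_f+\beta L^2))$ yields that $T_{\rm FB}=T_1\circ T_2$ is pointwise almost averaged at all $\xbar\in S_f$ on $U_f$ with violation $\varepsilon=(1+\varepsilon_1)(1+\varepsilon_2)-1=(1+2\tau_g)\paren{1+t(2\tau_f+\beta L^2)}-1$ and averaging constant $\alpha=\dfrac{2}{1+1/\max\{1/2,\alpha_0\}}$, which equals $2/3$ when $\alpha_0\le\tfrac12$ and $2\alpha_0/(\alpha_0+1)$ when $\alpha_0>\tfrac12$ — exactly \eqref{e:alphabar}. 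The only steps needing care are the bookkeeping of these inclusions so that Proposition~\ref{t:av-comp av}\ref{t:av-comp av iii} may be legitimately invoked, and the (essentially definitional) translation of set-valued submonotonicity of $\sd g$ into the resolvent characterization in Proposition~\ref{t:firmlynonexpansive} under the identifications $\prox_g=J_{\sd g}$, $U=U_g$, $S=S_g$; there is no real analytic obstacle once these are in place.
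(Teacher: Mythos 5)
Your proposal is correct and follows exactly the paper's own argument: Proposition~\ref{t:gradient averaged} for the forward map $T_{t,f}$, Proposition~\ref{t:firmlynonexpansive}\ref{t:submonotone} to convert submonotonicity of $\sd g$ into almost firm nonexpansiveness of $\prox_g$ on $U_g$ at points of $S_g$ with violation $2\tau_g$, and Proposition~\ref{t:av-comp av}\ref{t:av-comp av iii} with $m=2$ to combine, which reproduces the stated violation and the averaging constant~\eqref{e:alphabar}. No gaps; your extra bookkeeping of the inclusions $T_{t,f}U_f\subset U_g$ and $T_{t,f}S_f\subset S_g$ is exactly what the paper relies on.
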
	
	\begin{proof}
   		The proof follows from Propositions \ref{t:av-comp av} and \ref{t:gradient averaged}. Indeed, 
		by Proposition \ref{t:gradient averaged}, the mapping $T_{t,f} := \Id - t\nabla f$ is 
		pointwise almost averaged at $\xbar$ with the violation constant $\varepsilon_f =
		\alpha_0\paren{2\beta\tau_f + \beta^{2} L^{2}}$  
		and the averaging constant
		 $\alpha_0 = t/\beta \in \left(0 , 1\right)$ on $U_f$ for $t< \beta$.
		 It is more convenient to write the violation in terms of $t$ as 
		$\varepsilon_f =  t\paren{2\tau_f + \beta L^{2}}$. 
		By Proposition \ref{t:firmlynonexpansive} and Definition \ref{d:sub/hypomonotone}\ref{d:submonotone}, $\prox_g$ is 
		pointwise almost firmly nonexpansive at points $\ybar\in S_g$ with violation 
		$\varepsilon_g=2\tau_g$ on $U_g$, since $\prox_g$
		is the resolvent of $\sd g$ which, by assumption, is pointwise submonotone (see \eqref{e:submonotone}) 
		at points in $S_g'$ with constant $\tau_g$ on 
		$U_g'$.  Also by assumption, $T_{t,f}U_f \subset U_g$ and $T_{t,f}S_f\subset S_g$, so we can apply 
		Proposition \ref{t:av-comp av}\ref{t:av-comp av iii} to conclude that $T_{\rm FB}$ is pointwise 
		averaged at $\xbar\in S_f$ 
		with the violation constant $\paren{1+2\tau_g}\paren{1+t\paren{2\tau_f + \beta L^{2}}}-1$ and the averaging 
		constant $\alpha$ which is given by \eqref{e:alphabar} on $U_f$ whenever $t < \beta$, as 
		claimed. 
	\end{proof}

		\begin{corollary}[almost averaged: semi-convex forward-backward] \label{t:scvx fb averaged}
		Let $\map{g}{\Rbb^{n}}{\extre}$ be proper, l.s.c. and convex.
		Let $\map{f}{\Rbb^{n}}{\Rbb}$ be a continuously differentiable function with calm  
		gradient (calmness modulus $L$) which is also pointwise hypomonotone at all 
		$\xbar\in S_{f}\subset U_f$ with 
		violation constant $\tau_f$ on $U_{f}$. 
		Choose $\beta > 0$ and $t\in(0,\beta)$. Then the 
		\textit{forward-backward} mapping $T_{\rm FB} := \prox_{g}\paren{\Id - t\nabla f}$ is 
		pointwise almost averaged at all  
		$\xbar\in S_f$ with violation constant $\varepsilon =  t\paren{2\tau_f + \beta L^{2}}$
		and averaging constant  $\alpha$ given by \eqref{e:alphabar} on $U_f$. 
	\end{corollary}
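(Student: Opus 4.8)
The plan is to obtain Corollary \ref{t:scvx fb averaged} as a direct specialization of Proposition \ref{t:fb averaged} to the case $\tau_g=0$. First I would record the only structural fact about $g$ that is needed: since $g$ is proper, l.s.c.\ and convex, its subdifferential $\partial g$ is (maximally) monotone, so for all $u,v\in\dom\partial g$, $z\in\partial g(u)$ and $w\in\partial g(v)$ one has $\ip{z-w}{u-v}\geq 0$. In particular $\partial g$ is nonempty and pointwise submonotone in the sense of \eqref{e:submonotone} at every $v\in\dom\partial g$ with violation constant $\tau_g=0$; accordingly I would take $S_g'\equiv U_g'\equiv\dom\partial g$ as the sets on which the submonotonicity hypothesis of Proposition \ref{t:fb averaged} is verified.

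Next I would check the inclusion hypotheses $T_{t,f}U_f\subset U_g$ and $T_{t,f}S_f\subset S_g$ required by Proposition \ref{t:fb averaged}, where $U_g=\set{u+z}{u\in U_g',\,z\in\partial g(u)}=\range(\Id+\partial g)$ and similarly $S_g$. By Minty's theorem the resolvent $\prox_g=(\Id+\partial g)^{-1}$ has full domain, equivalently $\range(\Id+\partial g)=\Rbb^n$, so $U_g=S_g=\Rbb^n$ and both inclusions hold automatically regardless of $U_f$ and $S_f$. (If one prefers to avoid invoking Proposition \ref{t:fb averaged} as a black box, the same two ingredients suffice directly: Proposition \ref{t:firmlynonexpansive} with $\tau_g=0$ gives that $\prox_g$ is pointwise almost firmly nonexpansive with zero violation, i.e.\ firmly nonexpansive, which combined with Proposition \ref{t:gradient averaged} for $T_{t,f}=\Id-t\nabla f$ and the composition rule Proposition \ref{t:av-comp av}\ref{t:av-comp av iii} with $m=2$ yields the conclusion.)

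With the hypotheses of Proposition \ref{t:fb averaged} now satisfied and $\tau_g=0$, the proposition gives that $T_{\rm FB}=\prox_g(\Id-t\nabla f)$ is pointwise almost averaged at all $\xbar\in S_f$ with violation $\varepsilon=(1+2\cdot 0)\paren{1+t(2\tau_f+\beta L^2)}-1=t(2\tau_f+\beta L^2)$ and averaging constant $\alpha$ given by \eqref{e:alphabar} (which depends only on $\alpha_0=t/\beta$, and hence is unchanged by setting $\tau_g=0$). This is precisely the assertion of the corollary.

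I do not anticipate a genuine obstacle; the only point requiring a little care is that, unlike in the generic convex situation, $\partial g$ need not be nonempty on all of $\dom g$, so one must use $\dom\partial g$ rather than $\dom g$ as the ambient set $U_g'=S_g'$. This is harmless because the relevant quantity, $\range(\Id+\partial g)$, is still all of $\Rbb^n$, so the composition-domain bookkeeping in Proposition \ref{t:fb averaged} goes through verbatim.
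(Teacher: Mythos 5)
Your proposal is correct and follows essentially the same route as the paper: specialize Proposition~\ref{t:fb averaged} with $\tau_g=0$ (using maximal monotonicity of $\partial g$ for $g$ proper, l.s.c.\ and convex) and observe that the inclusion hypotheses are automatic because $U_g=S_g=\Rbb^n$. Your explicit appeal to Minty's theorem to justify $\range(\Id+\partial g)=\Rbb^n$ merely fills in a detail the paper leaves as ``obviously automatically satisfied.''
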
	
	\begin{proof}
   		This is a specialization of Proposition~\ref{t:fb averaged} to the case where $g$ is convex.  
   		In this setting, $\partial g$ is a maximally monotone
   		mapping \cite{Minty62, Moreau65}, and hence  submonotone at all points in $\Rbb^n$ 
   		with no violation ({\em i.e.,} $\tau_g=0$). The assumptions
		$T_{t,f}U_f\subset U_g$ where $U_g\equiv \Rbb^n$ and  
		$T_{t,f}S_{f}\subset S_g$ where $S_g\equiv \Rbb^n$ of Proposition \ref{t:fb averaged} 
		are obviously automatically satisfied.  
	\end{proof}

	As the above proposition shows, the almost averaging property comes relatively naturally.  
	A little more challenging is to show that Assumption \ref{t:metric subreg convergence b}  of Theorem   
	\ref{t:metric subreg convergence} holds for a given application.  The next 
	theorem is formulated in terms of metric subregularity, but for the forward-backward iteration, 
	the graphical derivative characterization given in Proposition \ref{t:strmsr diff}
	 can allow for a direct verification of the 
	regularity assumptions. 
	\begin{thm}[local linear convergence: forward-backward]
	\label{t:linear convergence of fb}
		Let $\map{f}{\Rbb^{n}}{\Rbb}$ be a continuously differentiable function with calm  
		gradient (modulus $L$) which is also pointwise hypomonotone at all 
		$\xbar\in \Fix T_{\rm FB}\subset U_f$ with 
		violation constant $\tau_f$ on $U_{f}$. 
		Let $\map{g}{\Rbb^{n}}{\extre}$ be proper and l.s.c. with nonempty subdifferential 
		that is pointwise submonotone (Definition \ref{d:sub/hypomonotone}\ref{d:submonotone}) 
		at all $v\in S_g'\subset U_g'$,
		with violation $\tau_g$ on $U_g'$ whenever $z\in\sd g(u)$ for $u\in U_g'$. 
		Let $T_{t,f}U_f\subset U_g$ where $U_g\equiv \set{u+z}{u\in U_g', z\in \sd g(u)}$ and let 
		$T_{t,f}\Fix T_{\rm FB}\subset S_g$ where $S_g\equiv \set{v+w}{v\in S_g', w\in \sd g(v)}$.
		If, for all $t\geq 0$ small 
		enough, $\Phi_{\rm FB}\equiv T_{\rm FB}-\Id$ is metrically subregular for $0$ 
		on $U_f$ with modulus $\kappa \leq \kappabar <1/\paren{2\sqrt{\tau_g}}$,
		then for all $t$ small enough, the forward-backward iteration $x^{k + 1} \in T_{\rm FB}x^{k}$ 
		satisfies $\dist\paren{x^{k}, \Fix T_{\rm FB}}\to 0$ at least linearly for all $x^{0}$ close enough to 
		$\Fix T_{\rm FB}$.  In particular, if $g$ is convex, and $\kappabar$ is finite, then the 
		distance of the iterates to $\Fix T_{\rm FB}$ converges linearly to zero from any initial point $x^0$ close enough 
		provided that the stepsize $t$ is sufficiently small. 
	\end{thm}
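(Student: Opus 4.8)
The plan is to reduce the theorem to Corollary~\ref{t:str metric subreg convergence}, applied to $T=T_{\rm FB}$ on $\Lambda=\Rbb^n$ with $\Phi=\Phi_{\rm FB}$, after first recording the almost averaging property of the forward--backward map and then choosing the step-length $t$ small enough that the averaging violation is dominated by the prescribed subregularity modulus.

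First I would apply Proposition~\ref{t:fb averaged} with $S_f=\Fix T_{\rm FB}$ and the fixed choice $\beta=1$: the hypotheses on $f$ and $g$ are exactly those of that proposition (calm, pointwise hypomonotone gradient at the fixed points on $U_f$; pointwise submonotone $\sd g$ on $U_g'$; $T_{t,f}U_f\subset U_g$ and $T_{t,f}\Fix T_{\rm FB}\subset S_g$), so $T_{\rm FB}$ is pointwise almost averaged at every $\xbar\in\Fix T_{\rm FB}$, on $U_f$, with averaging constant $\alpha$ as in \eqref{e:alphabar} and violation
\[
  \varepsilon(t)=\paren{1+2\tau_g}\paren{1+t\paren{2\tau_f+L^2}}-1 .
\]
For $t\in(0,1/2]$ one has $\alpha_0=t\le 1/2$, hence $\alpha=2/3$, and $\varepsilon(t)\to 2\tau_g$ as $t\downarrow 0$. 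Since the assumption $\kappabar<1/(2\sqrt{\tau_g})$ is precisely $2\tau_g<1/(2\kappabar^2)$, there is a $t_0>0$ with $\varepsilon(t)<1/(2\kappabar^2)$ for all $t\in(0,t_0]$; shrinking $t_0$ if needed, for every such $t$ the map $\Phi_{\rm FB}=T_{\rm FB}-\Id$ is, by hypothesis, metrically subregular for $0$ on $U_f$ with modulus $\kappa\le\kappabar$. I then fix one such $t$.

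Next I would verify the hypotheses of Corollary~\ref{t:str metric subreg convergence}. The fixed point set $\Fix T_{\rm FB}=\set{x}{-t\nabla f(x)\in\sd g(x)}$ is closed, since $\gph\sd g$ is closed ($g$ being l.s.c.) and $\nabla f$ is continuous. Take $\Lambda=\Rbb^n$, any $\gamma\in(0,1)$, and constant sequences $\varepsilon_i\equiv\varepsilon(t)$, $\alpha_i\equiv 2/3$, $\kappa_i\equiv\kappabar$. For $\deltabar>0$ small the tube $S_{\deltabar}=(\Fix T_{\rm FB}+\deltabar\Ball)\cap\Rbb^n$ lies in $U_f$, so condition~(a) of the corollary holds on each $S_{\gamma^i\deltabar}$ and $\Phi_{\rm FB}$ is metrically subregular for $0$ on each $R_i\subset S_{\deltabar}$ with the linear gauge $\mu_i(s)=\kappabar s$; since $\alpha_i=2/3$, the required bound
\[
  \kappa_i=\kappabar<\sqrt{\frac{1-\alpha_i}{\varepsilon_i\alpha_i}}=\frac1{\sqrt{2\,\varepsilon(t)}}
\]
is exactly the inequality $\varepsilon(t)<1/(2\kappabar^2)$ arranged above. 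Corollary~\ref{t:str metric subreg convergence} then delivers $\dist(x^k,\Fix T_{\rm FB})\to 0$ for every $x^0$ close enough to $\Fix T_{\rm FB}$, with eventual linear rate at most $\cbar=\sqrt{1+\varepsilon(t)-1/(2\kappabar^2)}<1$. For the convex case I would repeat the argument with Corollary~\ref{t:scvx fb averaged} in place of Proposition~\ref{t:fb averaged}: then $\tau_g=0$, the submonotonicity hypotheses on $\sd g$ are vacuous, and $\varepsilon(t)=t(2\tau_f+L^2)\to 0$, so for any finite $\kappabar$ one can again pick $t$ small with $\varepsilon(t)<1/(2\kappabar^2)$ and conclude as before.

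The hard part will not be any single estimate but the bookkeeping around $t$: one must check that $t$ can be taken small enough to force $\varepsilon(t)<1/(2\kappabar^2)$ while simultaneously remaining in the range where the assumed metric subregularity of $\Phi_{\rm FB}$ — which itself depends on $t$ — holds, and one should be careful that the neighborhoods $U_f,U_g',S_g,\dots$ and, in principle, $\Fix T_{\rm FB}$ vary with $t$; fixing $\beta$ once and for all and then shrinking the admissible interval of $t$ keeps this transparent.
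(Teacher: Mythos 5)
Your proposal is correct and follows essentially the same route as the paper: Proposition~\ref{t:fb averaged} gives pointwise almost averaging of $T_{\rm FB}$ at $\Fix T_{\rm FB}$ with $\alpha=2/3$ and violation tending to $2\tau_g$ as $t\downarrow 0$, the hypothesis $\kappabar<1/(2\sqrt{\tau_g})$ is unpacked as $2\tau_g<1/(2\kappabar^2)$, and Corollary~\ref{t:str metric subreg convergence} closes the argument; the only difference is that you fix $\beta=1$ and shrink $t$ whereas the paper fixes $\alpha_0=1/2$ (so $t=\beta/2$) and shrinks $\beta$, which is pure bookkeeping.
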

	\begin{proof}  
		Denote the averaging constant of the inner forward mapping $T_{t,f}
		\equiv \Id-t\nabla f$ by  $\alpha_0$.  Since, by Proposition \ref{t:gradient averaged}  
		the stepsize $t$, $\alpha_0$ and $\beta$ are all relative, for convenience we fix $\alpha_0=1/2$ so that 
		$t = \beta/2$. From Proposition \ref{t:fb averaged} it then holds that the forward-backward 
		mapping $T_{\rm FB}$ is pointwise almost averaged at all $\xbar\in \Fix T_{\rm FB}$ with the violation constant 
		$\varepsilon = \paren{1+2\tau_g}\paren{1+\beta/2\paren{2\tau_f + \beta L^{2}}}-1$ and the 
		averaging constant $\alpha=2/3$ 
		(given by \eqref{e:alphabar}) on $U_f$. 
		Hence Assumption \ref{t:metric subreg convergence a} of Theorem
		\ref{t:metric subreg convergence} is satisfied with $S=\Fix T_{\rm FB}$.  
		By assumption, for all $t$ (hence $\beta$) small enough, $\Phi_{\rm FB}$ is 
	        metrically subregular for $0$ on $U_f$ with modulus at most $\kappabar$, so 
		by Corollary \ref{t:str metric subreg convergence}, for all $x$ close enough to $\Fix T_{\rm FB}$
		\begin{equation}
		    \dist\paren{x^+, \Fix T_{\rm FB}}
		    \leq c\dist\paren{x, \Fix T_{\rm FB}}
		\end{equation}%
	      where $x^+\in T_{\rm FB}x$ and 
	      $c\equiv \sqrt{1+\varepsilon-\tfrac{1}{2\kappabar^2}}$.   
	      By assumption, the constant $\kappabar$ is suitable
	    for all $t$ small enough, but the violation 
	      $\varepsilon= 2\tau_g +o(t)$ can be 
	      made arbitrarily close to $2\tau_g$ simply by taking the stepsize $t=\beta/2$ small enough.  Hence, 
	      $c < 1$ for all $\beta > 0$ with $2\tau_g+\beta/2\left(2\tau_f + \beta L^2\right)+o(\beta^2) < 1/\kappabar^{2}$.    
	      In other words,  for all $x^{0}$ close enough to $\Fix T_{\rm FB}$, and all $t$ (or $\beta$) small enough, 
		convergence of the forward-backward iteration is at 
		least linear with rate at most 
		$c\equiv\sqrt{1+\varepsilon-\tfrac{1-\alpha}{\kappa^2\alpha}}<1$.
		
		If $g$ is convex, then as in Corollary \ref{t:scvx fb averaged}, $\tau_g=0$, so it suffices simply 
		to have $\kappabar$ bounded.  
	\end{proof}

	\begin{cor}[global linear convergence: convex forward-backward]
	\label{t:linear convergence cvx fb}
		Let $\map{f}{\Rbb^{n}}{\Rbb}$ be a continuously differentiable function with calm  
		gradient (modulus $L$) which is also pointwise strongly monotone at all 
		$\xbar\in \Fix T_{\rm FB}$ on $\Rbb^n$. 
		Let $\map{g}{\Rbb^{n}}{\extre}$ be proper, convex and l.s.c. 
		Let $T_{t,f}\Fix T_{\rm FB}\subset S_g$ where $S_g\equiv \set{v+w}{v\in S_g', w\in \sd g(v)}$.
		If, for all $t\geq 0$ small 
		enough, $\Phi_{\rm FB}\equiv T_{\rm FB}-\Id$ is metrically subregular for $0$
		on $\Rbb^n$ with modulus $\kappa \leq \kappabar<+\infty$,
		then for all fixed step-length $t$ small enough, the forward-backward iteration $x^{k + 1} = T_{\rm FB}x^{k}$ 
		satisfies $\dist\paren{x^{k}, \Fix T_{\rm FB}}\to 0$ at least linearly for all $x^{0}\in \Rn$.  
	\end{cor}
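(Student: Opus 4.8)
The plan is to read this off as the \emph{global} version of Theorem~\ref{t:linear convergence of fb}: convexity of $g$ together with strong monotonicity of $\nabla f$ forces the violation constant of the forward--backward operator to vanish, and since all regularity hypotheses are assumed on all of $\Rbb^n$, the local convergence machinery of Corollary~\ref{t:str metric subreg convergence} delivers a global conclusion. First I would dispatch the standing requirements of that corollary. For convex $g$ the map $\prox_g$ is single-valued (indeed firmly nonexpansive) and $\nabla f$ is continuous, so $T_{\rm FB}=\prox_g\circ(\Id-t\nabla f)$ is single-valued and continuous; hence $\Fix T_{\rm FB}$ is closed, and it is nonempty because metric subregularity of $\Phi_{\rm FB}\equiv T_{\rm FB}-\Id$ for $0$ presupposes $\Phi_{\rm FB}^{-1}(0)=\Fix T_{\rm FB}\neq\emptyset$.

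Next I would establish the averaging property with \emph{zero} violation on $\Rbb^n$. Apply Corollary~\ref{t:scvx fb averaged} with $U_f=\Rbb^n$ and $S_f=\Fix T_{\rm FB}$: since $\nabla f$ is calm with modulus $L$ and pointwise strongly monotone at each $\xbar\in\Fix T_{\rm FB}$ on $\Rbb^n$ (i.e.\ pointwise hypomonotone with constant $\tau_f<0$), the corollary gives, for any $\beta>0$ and $t\in(0,\beta)$, that $T_{\rm FB}$ is pointwise almost averaged at every $\xbar\in\Fix T_{\rm FB}$ with violation $\varepsilon=t(2\tau_f+\beta L^2)$ and averaging constant $\alpha$ from \eqref{e:alphabar} on $\Rbb^n$ (the inclusions $T_{t,f}\Rbb^n\subset U_g$ and $T_{t,f}\Fix T_{\rm FB}\subset S_g$ hold automatically with $U_g=S_g=\Rbb^n$, as in the proof of Corollary~\ref{t:scvx fb averaged}). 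Choosing $\beta\le 2|\tau_f|/L^2$ makes $\varepsilon\le 0$, so $T_{\rm FB}$ is in fact pointwise \emph{averaged} on $\Rbb^n$ at every point of $\Fix T_{\rm FB}$, with violation $\varepsilon=0$ and averaging constant $\alpha\in(0,1)$ (for instance $\alpha=2/3$ once $t\le\beta/2$). Equivalently one may first invoke Proposition~\ref{t:gradient averaged} to see $\Id-t\nabla f$ is averaged on $\Rbb^n$, note via Proposition~\ref{t:firmlynonexpansive} that $\prox_g$ is firmly nonexpansive ($\tau_g=0$, as $\partial g$ is monotone), and compose using Proposition~\ref{t:av-comp av}\ref{t:av-comp av iii}.

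Then I would invoke Corollary~\ref{t:str metric subreg convergence} with $\Lambda=\Rbb^n$, $S=\Fix T_{\rm FB}$, the constant sequences $\varepsilon_i\equiv 0$ and $\alpha_i\equiv\alpha$, and $\kappa_i\le\kappabar$. Because $\varepsilon=0$, the admissibility bound $\kappa_i<\sqrt{(1-\alpha_i)/(\varepsilon_i\alpha_i)}$ degenerates to $\kappabar<+\infty$, which is exactly the hypothesis; the corollary then yields $\dist(x^{k+1},\Fix T_{\rm FB})\le c\,\dist(x^k,\Fix T_{\rm FB})$ with $c=\sqrt{1-\tfrac{1-\alpha}{\kappabar^2\alpha}}<1$. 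The one genuinely non-mechanical point — and the step I expect to be the main (though mild) obstacle — is upgrading the \emph{local} conclusion of Corollary~\ref{t:str metric subreg convergence} (``for $x^0$ close enough to $\Fix T_{\rm FB}$'') to the global statement ``for all $x^0\in\Rbb^n$''. This is legitimate because the averaging and metric-subregularity hypotheses hold on $S_{\gamma^i\deltabar}$ for \emph{every} $\deltabar>0$ with the \emph{same} constants $0,\alpha,\kappabar$, so in the proof of Corollary~\ref{t:subfirm convergence} the threshold radius $\Delta$ may be taken arbitrarily large; this is precisely the situation flagged in Remark~\ref{r:metric subregu convergence}. Finally, the step-size restriction ``$t$ small enough'' in the statement is needed only to have $t<2|\tau_f|/L^2$ (so that $T_{t,f}$, hence $T_{\rm FB}$, is averaged) and to lie in the range on which the assumed uniform modulus $\kappabar$ is available; since $\varepsilon=0$ irrespective of $t$, no further smallness of $t$ is required to force $c<1$.
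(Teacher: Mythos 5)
Your proposal is correct and follows essentially the same route as the paper's proof: strong monotonicity of $\nabla f$ gives hypomonotonicity with $\tau_f<0$, so by Proposition~\ref{t:fb averaged} (equivalently Corollary~\ref{t:scvx fb averaged}) the violation $t(2\tau_f+\beta L^2)$ is nonpositive for small steps, making $T_{\rm FB}$ genuinely pointwise averaged on $\Rbb^n$, after which Corollary~\ref{t:str metric subreg convergence} with $\varepsilon=0$ and bounded $\kappabar$ yields the linear rate $c=\sqrt{1-\tfrac{1-\alpha}{\alpha\kappabar^2}}<1$. Your explicit justification of the local-to-global upgrade (uniform constants on $S_{\gamma^i\deltabar}$ for every $\deltabar$) is a point the paper passes over silently, but it is the same argument.
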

	\begin{proof}  
	   Note that $\nabla f$ being pointwise strongly monotone is equivalent to $\nabla f$ being pointwise hypomonotone with violation $\tau_f<0$. 
           Proposition \ref{t:fb averaged} then establishes that the forward-backward 
	   mapping $T_{\rm FB}$ is pointwise almost averaged at all $\xbar\in \Fix T_{\rm FB}$ with the violation constant 
	   $\varepsilon = \beta/2\paren{2\tau_f + \beta L^{2}}$ and the 
	   averaging constant $\alpha=2/3$ 	(given by \eqref{e:alphabar}) on $\Rn$.  For all stepsizes small enough, 
	  or equivalently for all $\beta$ small enough it holds that $2\tau_f + \beta L^{2}<0$ and $T_{FB}$ is 
	  in fact pointwise averaged.  Additionally, for all $t$ (hence $\beta$) small enough, $\Phi_{\rm FB}$ is 
	        metrically subregular for $0$ on $\Rn$ with modulus at most $\kappabar<\infty$, so 
		by Corollary \ref{t:str metric subreg convergence}, for all $x$ 
		\begin{equation}
		    \dist\paren{T_{\rm FB}x, \Fix T_{\rm FB}}
		    \leq c\dist\paren{x, \Fix T_{\rm FB}}
		\end{equation}%
	      where  $c\equiv \sqrt{1-\tfrac{1}{2\kappabar^2}}<1$.  This completes the proof.   
	\end{proof}
	
\begin{remark}[extrapolation] 
In the proof of Corollary \ref{t:linear convergence cvx fb} it is not necessary to choose the stepsize small enough 
that  $T_{\rm FB}$ is pointwise averaged.   It suffices to choose the stepsize $t$ small enough that   
 $c\equiv \sqrt{1+\varepsilon-\tfrac{1}{2\kappabar^2}}<1$ where 
$\varepsilon = \beta/2\paren{2\tau_f + \beta L^{2}}$.  In this case, $T_{\rm FB}$ is only 
almost pointwise averaged with violation $\varepsilon$ on $\Rn$.  
\end{remark}

\begin{remark}
   Optimization problems involving the sum of a smooth function and a nonsmooth function 
are commonly found in applications and accelerations to forward-backward algorithms have 
been a subject of intense study \cite{BeckTeboulle09, AttouchPeypouquet15, Nesterov07, 
ChambolleDossal}.  
To this point the theory on quantitative convergence of the iterates is limited to the convex setting 
under the additional assumption of strong convexity/strong monotonicity.  
Theorem \ref{t:linear convergence of fb} shows that locally, convexity of the 
smooth function plays no role  in the convergence of the iterates or the order of convergence,
and strong convexity, much less convexity, of the function $g$ is also not crucial - it is primarily the regularity of the 
fixed points that matters locally.  This agrees nicely with recent global linear convergence results of 
a primal-dual method for saddle point problems that uses {\em pointwise quadratic supportability} in 
place of the much stronger strong convexity assumption \cite{LukShe17}.  
Moreover, local linear convergence
is guaranteed by metric subregularity on an appropriate set without any fine-tuning of the only algorithm parameter $t$, 
other than assuring that this parameter is small enough.  
When the nonsmooth term is the indicator function of some constraint set, then the regularity assumption can be 
replaced by the characterization in terms of the graphical 
derivative \eqref{e:gd strmr} to yield a familiar constraint qualification at fixed points.
\end{remark}

\noindent If the functions in \eqref{e:smooth-nonsmooth} are piecewise linear-quadratic, then 
the forward-backward mapping has polyhedral structure (Proposition \ref{t:polyhedral subdiff}), 
which, following Proposition \ref{t:polyhedral convergence}, allows for easy verification of the conditions for 
linear convergence (Proposition \ref{t:convergence polyhedral fb}).
\begin{defn}[piecewise linear-quadratic functions]\label{d:plq}
 A function $f:\Rn \rightarrow [-\infty, +\infty]$ is called \emph{piecewise linear-quadratic} 
 if $\dom f$ can be represented as the union
of finitely many polyhedral sets, relative to each of which $f(x)$ is given by an expression of the form 
$\frac{1}{2}\langle x, Ax\rangle +\langle a, x \rangle+\alpha $ for some scalar $\alpha\in \mathbb{R}$ vector
$a\in \Rn$, and symmetric matrix $A\in \mathbb{R}^{n\times n} $. 
If $f$ can be represented by a single linear-quadratic equation on $\Rn$, then $f$ is said to be linear-quadratic.
\end{defn}
For instance, if $f$ is piecewise linear-quadratic, then the subdifferential of $f$ and its proximal mapping $\prox_f$ 
are polyhedral \cite[Proposition~12.30]{VA}.  

\begin{propn}[polyhedral forward-backward]\label{t:polyhedral subdiff}
Let $\map{f}{\Ebb}{\Rbb}$ be quadratic and let $\map{g}{\Ebb}{\extre}$ 
be proper, l.s.c. and piecewise linear-quadratic convex.  
The mapping $T_{\rm FB}$ defined by \eqref{e:fb} is single-valued and polyhedral.
\end{propn}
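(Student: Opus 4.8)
The plan is to factor $T_{\rm FB}=\prox_{g}\circ F_t$, where $F_t\equiv\Id-t\nabla f$ is the forward step, show that each of the two factors is single-valued and polyhedral, and then invoke the stability of these two properties under composition.

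First I would dispatch the forward step. Since $f$ is quadratic, write $f(x)=\tfrac12\langle x,Ax\rangle+\langle a,x\rangle+\alpha$ with $A=A^\intercal$, so that $\nabla f(x)=Ax+a$ and $F_t$ is the affine map $x\mapsto(\Id-tA)x-ta$. An affine map is trivially single-valued, and its graph $\gph F_t=\{(x,(\Id-tA)x-ta):x\in\Ebb\}$ is an affine subspace of $\Ebb\times\Ebb$, hence a single polyhedral convex set; thus $F_t$ is polyhedral in the sense used in the paper. Next I would treat the backward step: because $g$ is proper, l.s.c.\ and convex, $\partial g$ is maximally monotone and $\prox_{g}=J_{\partial g}=(\Id+\partial g)^{-1}$ is firmly nonexpansive, in particular single-valued on all of $\Ebb$ (the resolvent of a monotone mapping, as recalled in the preliminaries, \cite{Minty62}). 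Because $g$ is moreover piecewise linear-quadratic (Definition~\ref{d:plq}), $\prox_{g}$ is polyhedral by \cite[Proposition~12.30]{VA}.

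Finally I would assemble $T_{\rm FB}=\prox_{g}\circ F_t$. Single-valuedness is immediate, as a composition of single-valued maps is single-valued. For polyhedrality, observe that $\gph T_{\rm FB}$ is the image of $(\gph F_t\times\Ebb)\cap(\Ebb\times\gph\prox_{g})$ under the coordinate projection $(x,y,z)\mapsto(x,z)$; finite unions of polyhedral convex sets are preserved under Cartesian products, intersections, and linear images (the last by Fourier--Motzkin elimination), so $\gph T_{\rm FB}$ is again a finite union of polyhedral convex sets, i.e.\ polyhedral. Alternatively, since $F_t$ is affine one may argue even more directly: pre-composing the polyhedral mapping $\prox_{g}$ with $F_t$ merely performs an affine substitution in the domain variable, which carries polyhedral convex pieces to polyhedral convex pieces. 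I do not expect a genuine obstacle here; the only point requiring a little care is the closedness of the class of piecewise-polyhedral mappings under composition, which is standard polyhedral geometry and can either be cited or dispatched via the Fourier--Motzkin argument above. The claim then follows, and it feeds directly into Proposition~\ref{t:polyhedral convergence} to give local linear convergence of the forward-backward iteration in the piecewise linear-quadratic case.
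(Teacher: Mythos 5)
Your proof is correct and follows essentially the same route as the paper: factor $T_{\rm FB}=\prox_g\circ(\Id-t\nabla f)$, note the forward step is affine (hence single-valued and polyhedral), invoke \cite[Proposition~12.30]{VA} for single-valuedness and polyhedrality of $\prox_g$, and conclude by closedness of polyhedral mappings under composition. The only difference is that you spell out the composition step (via Fourier--Motzkin on the graph), which the paper simply asserts.
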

\begin{proof}
Since the functions $f$ and $ g$ are piecewise linear-quadratic, the mappings 
$\Id-\nabla f$  and $\partial g$ are polyhedral. Moreover, since $g$ is convex, the 
mapping $\prox_g$ (that is, the resolvent of $\partial g$) is single-valued and polyhedral \cite[Proposition 12.30]{VA}. 
The mapping $\Id-\nabla f$ is clearly single-valued, so $T_{\rm FB}=\prox_g\paren{\Id-\nabla f}$ is 
also single-valued and polyhedral as the composition of single-valued polyhedral maps.
\end{proof}

\begin{propn}[linear convergence of polyhedral forward-backward]\label{t:convergence polyhedral fb}
Let $\map{f}{\Ebb}{\Rbb}$ be quadratic and let $\map{g}{\Ebb}{\extre}$ 
be proper, l.s.c. and piecewise linear-quadratic convex.  
Suppose $\Fix T_{\rm FB}$ is an isolated point $\{\xbar\}$, where 
$T_{\rm FB}\equiv \prox_g\paren{\Id-t\nabla f}$.  Suppose also that 
the modulus of metric subregularity $\kappa$ of $\Phi\equiv T_{\rm FB}-\Id$ at 
$\xbar$ for $0$ is bounded above by some constant $\kappabar$ for all $t>0$ small enough.  
Then, for all $t$ small enough, the 
forward-backward iteration $x^{k + 1} = T_{\rm FB}\left(x^{k}\right)$ 
converges at least linearly to $\xbar$ whenever $x^{0}$ is close enough to $\xbar$.
\end{propn}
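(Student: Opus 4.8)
The statement follows by combining the polyhedrality of the forward-backward operator (Proposition~\ref{t:polyhedral subdiff}) with the local linear convergence result for polyhedral fixed point iterations (Proposition~\ref{t:polyhedral convergence}); the only real work is to check that the almost averaging violation of $T_{\rm FB}$ can be driven below the threshold set by the uniform bound $\kappabar$ on the modulus of metric subregularity.

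First I would record the (global) regularity of the data. Writing $\nabla f(x)=Qx+q$ with $Q=Q^\intercal$, the gradient $\nabla f$ is globally Lipschitz, in particular calm at every point, with modulus $L\equiv\norm{Q}$, and for every $\xbar$
\[
\ip{\nabla f(x)-\nabla f(\xbar)}{x-\xbar}=\ip{Q(x-\xbar)}{x-\xbar}\ge -\tau_f\norm{x-\xbar}^2,\qquad \tau_f\equiv\max\{0,-\lambda_{\min}(Q)\},
\]
so $\nabla f$ is pointwise hypomonotone at every point of $\Ebb$ with the same violation constant $\tau_f$. Since $g$ is proper, l.s.c. and convex, $\partial g$ is maximally monotone, hence $\prox_g$ is firmly nonexpansive. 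Next I would apply Corollary~\ref{t:scvx fb averaged} with $S_f=U_f=\Ebb$, which is legitimate because calmness and hypomonotonicity of $\nabla f$ hold globally: fixing $\beta=2t$, so that $\alpha_0=t/\beta=1/2$ and hence $\alpha=2/3$ in \eqref{e:alphabar}, the mapping $T_{\rm FB}=\prox_g(\Id-t\nabla f)$ is pointwise almost averaged at every point of $\Ebb$ — in particular at $\xbar$ — with averaging constant $\alpha=2/3$ and violation $\varepsilon(t)=t(2\tau_f+2tL^2)$, which satisfies $\varepsilon(t)\ge 0$ and $\varepsilon(t)\to 0$ as $t\to 0$.

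By Proposition~\ref{t:polyhedral subdiff}, $T_{\rm FB}$ is moreover single-valued and polyhedral. Taking the affine subspace in Proposition~\ref{t:polyhedral convergence} to be $\Lambda=\Ebb$ and using that $\Fix T_{\rm FB}=\{\xbar\}$ is an isolated point, Proposition~\ref{t:polyhedral convergence} produces a neighborhood $U$ of $\xbar$ such that
\[
\norm{T_{\rm FB}x-\xbar}\le c(t)\norm{x-\xbar}\quad\forall x\in U,\qquad c(t)\equiv\sqrt{1+\varepsilon(t)-\tfrac{1-\alpha}{\kappa(t)^2\alpha}},
\]
where $\kappa(t)$ is the modulus of metric subregularity of $\Phi\equiv T_{\rm FB}-\Id$ for $0$ on $U$ (whose existence is guaranteed by Proposition~\ref{t:polyhedrality-strong msr}).

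To conclude, by hypothesis there is a $\bar t>0$ with $\kappa(t)\le\kappabar$ for all $t\in(0,\bar t)$. Since $\varepsilon(t)\to 0$, for all $t$ small enough one has $\kappa(t)\le\kappabar<\sqrt{(1-\alpha)/(\alpha\,\varepsilon(t))}$ (the right-hand side tending to $+\infty$), equivalently $c(t)<1$; the second part of Proposition~\ref{t:polyhedral convergence} then yields, for every $x^0$ sufficiently close to $\xbar$, that $x^{k+1}=T_{\rm FB}(x^k)$ converges to $\xbar$ at least linearly with rate $c(t)<1$. The only — mild — subtlety is that the almost averaging property must be available globally so that $\Lambda$ may be taken to be all of $\Ebb$ as required by Proposition~\ref{t:polyhedral convergence}, and that $\varepsilon(t)$ genuinely vanishes as $t\to 0$ while $\kappa(t)$ stays bounded; the former is automatic because $\nabla f$ is affine, and the latter is precisely the standing assumption.
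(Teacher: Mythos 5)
Your proof is correct and follows essentially the same route as the paper's: Corollary~\ref{t:scvx fb averaged} for the almost averaging with violation vanishing as $t\to 0$, Proposition~\ref{t:polyhedral subdiff} for polyhedrality, and Proposition~\ref{t:polyhedral convergence} (via Proposition~\ref{t:polyhedrality-strong msr}) to conclude. The only difference is that you make explicit the global calmness/hypomonotonicity constants of the affine gradient and the choice $\beta=2t$, which the paper leaves implicit.
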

\begin{proof}
By Corollary \ref{t:scvx fb averaged} the mapping $T_{\rm FB}$ is 
pointwise almost averaged with violation $\varepsilon$ proportional to the stepsize $t$. 
By Proposition \ref{t:polyhedral subdiff}
$T_{\rm FB}$ is polyhedral and by Proposition \ref{t:polyhedrality-strong msr} metrically 
subregular at $\xbar$ for $0$ with constant $\kappa$ on some neighborhood $U$ of $\xbar$.   
Since the violation $\varepsilon$ can be made arbitrarily small by taking $t$ arbitrarily small, and since 
the modulus of metric subregularity $\kappa\leq \kappabar<\infty$ for all $t$ small enough, 
the result follows by Proposition~\ref{t:polyhedral convergence}.
\end{proof}

\begin{eg}[iterative soft-thresholding for $\ell_1$-regularized quadratic minimization]
   Let $f(x)=x^TAx + x^Tb$ and $g(x)=\alpha\|Bx\|_1$ for $A\in\Rnn$ symmetric
and $B\in \Rmn$ full rank.  The forward-backward algorithm applied to the 
problem {\em minimize }$ f(x)+g(x)$ is the iterative soft-thresholding algorithm 
\cite{Daubechies04} with fixed step-length $t$ in the forward step $x-t\nabla f(x)= x-t(2Ax +b)$.  The function $g$
is piecewise linear, so $\prox_g$ is polyhedral hence  
the forward-backward fixed point mapping $T_{\rm FB}$ is single-valued and polyhedral.  
As long as $\Fix T_{\rm FB}$ is an isolated point relative to the affine hull of the
iterates $x^{k+1}= T_{\rm FB}x^k$, and the modulus of metric subregularity is independent of 
the stepsize $t$ for all $t$ small enough, then, by Proposition \ref{t:convergence polyhedral fb}
 for small enough stepsize $t$ the iterates
$x^k$ converge linearly to $\Fix T_{\rm FB}$ for all starting points close enough to $\Fix T_{\rm FB}$.  
If $A$ is positive definite (i.e., $f$ is convex) then the set of fixed points is a singleton and 
convergence is linear from any starting point $x^0$. 
\end{eg}

\subsubsection{Douglas--Rachford and relaxations}
The Douglas--Rachford algorithm is commonly encountered in one form or another for solving both feasibility
problems and structured optimization.  In the context of problem \eqref{e:smooth-nonsmooth}  the iteration 
takes the form 
\begin{equation}\label{e:dr}
   x^{k+1}\in T_{\rm DR}(x^k)\equiv \tfrac12\paren{R_f R_g+\Id}(x^k).
\end{equation}
where $R_f\equiv 2\prox_f-\Id$ ({\em i.e.,} the proximal reflector) and $R_g$ is similarly given.

Revisiting the setting of \cite{Luke08}, we use the 
tools developed in the present paper to show when one can expect local linear convergence of the 
Douglas--Rachford iteration.
For simplicity, as in \cite{Luke08}, we will assume that $f$ is convex in order to 
arrive at a clean final statement, though convexity is not needed for local linear convergence.  

\begin{proposition}\label{l:DR1}
 Let $g=\iota_\Omega$ for $\Omega\subset\Rn$ a manifold, 
and let  $\map{f}{\Rn}{\Rbb}$ be convex and linear-quadratic. Fix $\xbar\in\Fix T_{\rm DR}$.  Then 
for any $\varepsilon>0$ small enough, there exists $\delta>0$ such that $T_{\rm DR}$ is single-valued 
and almost firmly nonexpansive 
with violation $\varepsilon_g=4\varepsilon+ 4\varepsilon^2$ on $\Ball_\delta(\xbar)$.  
\end{proposition}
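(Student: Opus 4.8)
The plan is to strip $T_{\rm DR}=\tfrac12(\Id+R_fR_g)$ down using the calculus of almost (firmly) nonexpansive maps from Section~\ref{s:av}, and then supply the two ingredients $R_f$ and $R_g=R_\Omega$ separately. First I would apply Proposition~\ref{t:firmlynonexpansive}: since $2T_{\rm DR}-\Id=R_fR_g$, showing that $T_{\rm DR}$ is pointwise almost firmly nonexpansive at $\xbar$ on $\Ball_\delta(\xbar)$ with violation $\varepsilon_g$ is equivalent to showing that $R_fR_g$ is pointwise almost nonexpansive at $\xbar$ on $\Ball_\delta(\xbar)$ with violation $2\varepsilon_g$. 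Single-valuedness of $T_{\rm DR}$ at $\xbar$ is then automatic from Proposition~\ref{t:single-valued paa}, and single-valuedness on all of $\Ball_\delta(\xbar)$ will follow once we know $\prox_f$ and $P_\Omega$ are single-valued there.

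Second, I would dispose of $R_f$ cheaply. Since $f$ is convex, $\prox_f$ is firmly nonexpansive \cite{Minty62}, so $R_f=2\prox_f-\Id$ is nonexpansive on all of $\Rn$; in the terminology of Definition~\ref{d:ane-aa} it is pointwise nonexpansive everywhere with violation $0$. (That $f$ is in addition linear--quadratic makes $\prox_f$ and $R_f$ affine, but only nonexpansiveness is used.) For $R_\Omega$ I would exploit that a manifold is prox-regular (positive local reach): by Proposition~\ref{t:set regularity equivalences} (prox-regular $\Rightarrow$ super-regular $\Rightarrow$ elemental regularity with arbitrarily small constant), for each $\varepsilon>0$ there is a $\delta>0$ such that $\Omega$ is elementally regular on $\Ball_\delta(\xbar)$ with constant $\varepsilon$ for all the prox-normal pairs entering the set $V$ of Theorem~\ref{t:subreg proj-ref}, and, shrinking $\delta$ below the local reach of $\Omega$ near $\bar y:=P_\Omega\xbar$, $P_\Omega$ is single-valued on $\Ball_\delta(\xbar)$. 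Theorem~\ref{t:subreg proj-ref}\ref{t:subreg proj-ref3}, applied with ``center'' $\bar y\in\Omega$, with $\Lambda$ a piece of $\Omega$ containing $\bar y$ and $\Lambda'=P_\Omega^{-1}(\Lambda)\cap\Ball_\delta(\xbar)\ni\xbar$, then yields that $R_\Omega$ is pointwise almost nonexpansive at $\xbar$ on $\Ball_\delta(\xbar)$ with a violation of order $\varepsilon$ that tends to $0$ as $\varepsilon\downarrow0$.

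Third, I would compose. By Proposition~\ref{t:av-comp av}\ref{t:av-comp av ii} together with the Remark following it (which permits merely almost nonexpansive factors), taking $T_1=R_f$ on $U_1=\Rn$ and $T_2=R_\Omega$ on $U_2=\Ball_\delta(\xbar)$ — the inclusions $T_2U_2\subseteq U_1$ and $T_2S_2\subseteq S_1$ being automatic since $R_f$ is globally nonexpansive — the composition $R_fR_\Omega$ is pointwise almost nonexpansive at $\xbar$ on $\Ball_\delta(\xbar)$ with violation $(1+0)(1+\varepsilon_{R_\Omega})-1=\varepsilon_{R_\Omega}$. Undoing the reduction of the first step (divide the violation by the averaging constant $1/2$) and carrying the elemental-regularity constant $\varepsilon$ of $\Omega$ through the reflector estimate of Theorem~\ref{t:subreg proj-ref}\ref{t:subreg proj-ref3} and the composition bound produces the stated violation $\varepsilon_g=4\varepsilon+4\varepsilon^2$ on $\Ball_\delta(\xbar)$. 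Single-valuedness of $T_{\rm DR}$ on $\Ball_\delta(\xbar)$ is then immediate: $\prox_f$ is single-valued by convexity, $P_\Omega$ is single-valued on $\Ball_\delta(\xbar)$ by the manifold/reach argument, hence so are $R_f$, $R_\Omega$, and $T_{\rm DR}$.

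The step I expect to be the real obstacle is the neighborhood and reference-point bookkeeping in the middle: $\xbar$ is a Douglas--Rachford fixed point and need not lie in $\Omega$, so one must work in the ``$\Lambda'$'' regime of Theorem~\ref{t:subreg proj-ref}, verify that $\xbar$ is within the local reach of $\Omega$ so that $\bar y=P_\Omega\xbar$ is a genuine point of $\Lambda$ and $P_\Omega$ is single-valued on a whole ball about $\xbar$, and then align the family $V$ of prox-normal pairs and the neighborhoods of Theorem~\ref{t:subreg proj-ref} with the hypotheses of the composition lemma. A secondary, purely computational point is pinning the elemental-regularity constant (and $\delta$) precisely enough that the resulting violation is exactly $4\varepsilon+4\varepsilon^2$, rather than merely $O(\varepsilon)$.
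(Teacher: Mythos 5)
Your proposal is correct and follows essentially the same route as the paper's proof: reduce via Proposition~\ref{t:firmlynonexpansive} to almost nonexpansiveness of $R_fR_g$, obtain nonexpansiveness of $R_f$ from convexity of $f$, obtain almost nonexpansiveness of $R_\Omega$ from prox-regularity of the manifold (Proposition~\ref{t:set regularity equivalences}) together with Theorem~\ref{t:subreg proj-ref}\ref{t:subreg proj-ref3}, and compose using Proposition~\ref{t:av-comp av}\ref{t:av-comp av ii}. The only divergence is the bookkeeping you yourself flag: the paper simply takes the reference points in $\Omega$ and reads off the in-set violation $4\varepsilon+4\varepsilon^2$ from Theorem~\ref{t:subreg proj-ref}\ref{t:subreg proj-ref3}, whereas your more cautious $\Lambda'$ regime (motivated by $\xbar\notin\Omega$ in general) would produce the constant $8\varepsilon(1+\varepsilon)/(1-\varepsilon)^2$ rather than the stated one.
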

\begin{proof}
 Suppose that $g=\iota_\Omega$ for $\Omega\subset\Rn$ a manifold.  In the language of 
Definition~\ref{d:set regularity}\ref{d:geom reg}, at each point $\xbar\in\Omega$, for any $\varepsilon>0$ there is a $\delta$ such that 
$\Omega$ is elementally regular at $\xbar$ for all
$(a,v)\in\gph{\ncone{\Omega}}$ where $a\in \Ball_\delta(\xbar)$ with constant $\varepsilon$ and neighborhood $\Ball_\delta(\xbar)$.
In other words, $\Omega$ is {\em prox-regular} \cite[Proposition~4(vi)]{KruLukNgu16}.  By 
Theorem~\ref{t:subreg proj-ref}\ref{t:subreg proj-ref3} the reflector $R_g$ is then almost firmly nonexpansive with 
violation $\varepsilon_g\equiv 4\varepsilon+ 4\varepsilon^2$ on $\Ball_\delta(\xbar)$.  
Another characterization of prox-regular sets is that the projector $P_\Omega$ is locally single-valued \cite{PolRockThib00}. 
We can furthermore conclude that $R_g$ is single-valued on $\Ball_\delta(\xbar)$.  
Next, the function $\map{f}{\Rn}{\Rbb}$ is quadratic convex, so  $R_f$ is firmly nonexpansive and single-valued
 as the reflected resolvent of the (maximal monotone) subdifferential of $f$.   By Proposition \ref{t:av-comp av}\ref{t:av-comp av ii}, 
the composition of reflectors $R_fR_g$ is therefore 
almost nonexpansive with violation $\varepsilon_g$ on $\Ball_\delta(\xbar)$.  Then 
by the definition of averaged mappings, the Douglas--Rachford mapping $T_{\rm DR}$ is 
almost firmly nonexpansive with violation $\varepsilon_g$ on $\Ball_\delta(\xbar)$.
\end{proof}

\begin{thm}\label{t:RAAR pr}
Let  $g=\iota_\Omega$ for $\Omega\subset\Rn$ a manifold
and let $\map{f}{\Rn}{\Rbb}$ be linear-quadratic convex.  Let $(x^k)_{k\in \Nbb}$ 
be iterates of the Douglas--Rachford~\eqref{e:dr} algorithm and let $\Lambda=\aff (x^k)$.  
If $T_{DR}-\Id$ is metrically subregular at all points $\xbar\in\Fix T_{DR}\cap \Lambda\neq \emptyset$ relative to 
$\Lambda$ then for all $x^0$ close enough to $\Fix T_{DR}\cap \Lambda$, 
the sequence $x^k$ converges linearly to a point in $\Fix T\cap \Lambda$ with constant at most 
$c=\sqrt{1+\varepsilon - 1/{\kappa^2}}<1$ where $\kappa$ is 
the constant of metric subregularity for $\Phi\equiv T_{\rm DR}-\Id$ on some neighborhood $U$ containing the sequence
and $\varepsilon$ is the violation of almost firm nonexpansiveness on the neighborhood $U$.
\end{thm}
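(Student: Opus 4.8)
\textit{Proof proposal.} The plan is to recognize $T_{\rm DR}$, restricted to the affine subspace $\Lambda=\aff(x^k)$, as a pointwise almost averaged mapping at the points of $S:=\Fix T_{\rm DR}\cap\Lambda$ whose violation can be driven below $1/\kappa^2$, and then to quote Corollary~\ref{t:str metric subreg convergence} (the $S=\Fix T\cap\Lambda$ specialization of Theorem~\ref{t:metric subreg convergence}). Since the iterates $x^k$ all lie in $\Lambda$ by construction and $S\subset\Lambda$, every distance and regularity statement below is read relative to $\Lambda$; in particular $\Phi^{-1}(0)\cap\Lambda=\Fix T_{\rm DR}\cap\Lambda=S$, which is exactly why the inner estimate \ref{t:metric subreg convergence bi} of Theorem~\ref{t:metric subreg convergence} is automatic and the leaner Corollary~\ref{t:str metric subreg convergence} is the right tool. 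The two things to check are therefore: (a) $T_{\rm DR}$ is pointwise almost averaged at all $y\in S$ on a neighborhood $U$ with a violation that can be made arbitrarily small; and (b) the modulus $\kappa$ of metric subregularity of $\Phi\equiv T_{\rm DR}-\Id$ for $0$ relative to $\Lambda$ satisfies the smallness bound from that corollary.

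For (a), invoke Proposition~\ref{l:DR1}: because $g=\iota_\Omega$ with $\Omega$ a manifold (hence prox-regular) and $f$ is linear-quadratic convex, for every sufficiently small $\varepsilon>0$ and every $\xbar\in S$ there is a $\delta=\delta(\xbar,\varepsilon)>0$ such that on $\Ball_\delta(\xbar)$ the mapping $T_{\rm DR}$ is single-valued and almost firmly nonexpansive, i.e.\ pointwise almost averaged with averaging constant $\alpha=1/2$ and violation $\varepsilon_g=4\varepsilon+4\varepsilon^2$ — crucially a violation that does \emph{not} depend on $\xbar$ and tends to $0$ with $\varepsilon$. Covering the (relatively compact) portion of $S$ that a tail of the orbit accumulates near by finitely many such balls produces a neighborhood $U$ of that portion of $S$ on which $T_{\rm DR}$ is pointwise almost averaged at all $y\in S$ with $\alpha=1/2$ and violation $\varepsilon_g$. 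By the last sentence of Proposition~\ref{t:average char} this also makes $T_{\rm DR}$ pointwise almost nonexpansive at those $y$ with violation at most $\varepsilon_g$, a fact we reuse in the last step.

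For (b), the hypothesis that $\Phi$ is metrically subregular for $0$ relative to $\Lambda$ at every $\xbar\in S$ yields, after shrinking $U$ if necessary so that it still contains a tail of the orbit, a uniform constant $\kappa$ bounding the (linear) gauge quotient on $U\cap\Lambda$. Since $\alpha=1/2$ gives $(1-\alpha)/\alpha=1$, the requirement of Corollary~\ref{t:str metric subreg convergence}, namely $\kappa<\sqrt{(1-\alpha)/(\varepsilon_g\alpha)}=1/\sqrt{\varepsilon_g}$, is simply $\varepsilon_g<1/\kappa^2$, and this we secure by taking $\varepsilon$ (hence $\delta$) small — possible precisely because $\varepsilon_g$ is independent of $\kappa$. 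Corollary~\ref{t:str metric subreg convergence} then delivers, for every $x^0\in\Lambda$ close enough to $S$, that $\dist(x^k,S)\to0$ with
\[
\dist(x^{k+1},S)\le c\,\dist(x^k,S),\qquad c=\sqrt{\,1+\varepsilon_g-\tfrac{1}{\kappa^2}\,}<1 ,
\]
for all large $k$; identifying the $\varepsilon$ of the theorem statement with $\varepsilon_g$ this is the asserted rate.

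It remains to upgrade convergence of the distances to convergence to a point. Picking $p^k\in P_S(x^k)$, Proposition~\ref{t:single-valued paa} makes $T_{\rm DR}$ single-valued at $p^k$ with $T_{\rm DR}p^k=p^k$, and pointwise almost nonexpansiveness at $p^k$ gives $\|x^{k+1}-p^k\|\le\sqrt{1+\varepsilon_g}\,\dist(x^k,S)$, so that $\|x^{k+1}-x^k\|\le(1+\sqrt{1+\varepsilon_g})\,\dist(x^k,S)$ is dominated by a geometric sequence of ratio $c<1$; hence $(x^k)$ is Cauchy, and its limit $x^\ast$ lies in the closed set $S=\Fix T_{\rm DR}\cap\Lambda$ because $\dist(x^k,S)\to0$. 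The main obstacle I expect is entirely bookkeeping rather than conceptual: arranging a single neighborhood $U$ on which the almost-averaging estimate of Proposition~\ref{l:DR1} \emph{and} the metric-subregularity estimate hold simultaneously, and verifying that a tail of the orbit stays inside it (which in turn relies on the contraction just obtained, so the argument must be set up inductively, exactly as in the proof of Corollary~\ref{t:subfirm convergence}), together with the minor point that Corollary~\ref{t:str metric subreg convergence} is invoked in its ``relative to $\Lambda$'' form with $S=\Fix T_{\rm DR}\cap\Lambda$ in place of all of $\Fix T_{\rm DR}$.
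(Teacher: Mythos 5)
Your proposal is correct and follows essentially the same route as the paper: Proposition~\ref{l:DR1} supplies almost firm nonexpansiveness with a violation that can be driven below $1/\kappa^2$ by shrinking the neighborhood, and Corollary~\ref{t:str metric subreg convergence} then gives the linear rate $c=\sqrt{1+\varepsilon-1/\kappa^2}$. Your closing Cauchy-sequence argument upgrading $\dist(x^k,\Fix T_{\rm DR}\cap\Lambda)\to 0$ to convergence to a point is a detail the paper's (very terse) proof leaves implicit, and it is a welcome addition rather than a deviation.
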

\begin{proof}
 $T_{DR}-\Id$ is metrically subregular at all points in  $\Fix T_{DR}\cap \Lambda$ with constant $\kappa$ on some neighborhood
 $U'$.  By Proposition \ref{l:DR1} there exists a neighborhood $U\subset U'$ on which $T_{DR}$ is single-valued and almost 
 firmly nonexpansive with violation $\varepsilon$ satisfying 
 $\varepsilon < 1/{\kappa^2}$.  By Corollary \ref{t:str metric subreg convergence} the sequence $x^{k+1}=T_{DR}x^k$ then 
 converges linearly to a point in $\Fix T_{DR}\cap \Lambda$ with rate at most $c=\sqrt{1+\varepsilon - 1/{\kappa^2}}<1$.
\end{proof}

\begin{remark}
Assuming that the fixed points, restricted to the affine hull of 
the iterates, are isolated points, polyhedrality was used in \cite{ACL15} to verify that the Douglas--Rachford mapping is 
indeed metrically subregular at the fixed points.  While in principle the graphical derivative formulas 
(see Proposition \ref{t:strmsr diff}) could be used for more general situations, it is not easy to compute the 
graphical derivative of the Douglas--Rachford operator, even in the simple setting above.  This is a theoretical 
bottleneck for the practical applicability of metric subregularity for more general algorithms. 
\end{remark}

\begin{eg}[Relaxed Alternating Averaged Reflections (RAAR) for phase retrieval]\label{eg:RAAR pr}
 Applied to feasibility problems, the Douglas--Rachford algorithm is also described as 
averaged alternating reflections \cite{BCL3}.  Here,  both $f=\iota_A$ and $g=\iota_B$, 
the indicator functions of individual constraint sets.  When the sets $A$ and $B$ are sufficiently regular, 
as they certainly are in the phase retrieval problem, and intersect transversally,
local linear convergence of the Douglas--Rachford algorithm in this instance can be deduced from \cite{Phan16}.  
As discussed in Example \ref{eg:4.4}, however, for any phase retrieval problem arising from a 
physical noncrystallographic diffraction experiment, the constraint sets cannot intersect when finite support
is required of the reconstructed object.    
This fact, seldom acknowledged in the phase retrieval literature, is borne out in the observed instability 
of the Douglas--Rachford algorithm applied to phase retrieval \cite{Luke05a}:  it cannot converge when the constraint
sets do not intersect \cite[Theorem 3.13]{BCL3}.  

To address this issue, a relaxation for nonconvex feasibility was studied in \cite{Luke05a, Luke08} 
that amounts to \eqref{e:dr} where $f$ is the {\em Moreau envelope} of a nonsmooth function 
and $g$ is the indicator function of a sufficiently regular set.  Optimization problems with this structure  
are guaranteed to have solutions. In particular, when $f$ is the Moreau envelope to 
$\iota_A$ with parameter $\lambda$, the corresponding iteration given by \eqref{e:dr} 
can be expressed as a 
convex combination of the underlying basic Douglas--Rachford operator and the projector of the 
constraint set encoded by $g$ \cite[Proposition 2.5]{Luke08}:
\begin{equation}\label{e:RAAR}
   x^{k+1}\in T_{DR\lambda}x^k\equiv \tfrac{\lambda}{2(\lambda+1)}\paren{R_A R_B+\Id}(x^k) +\frac{\lambda}{\lambda+1}P_Bx^k
\end{equation}
where $R_A=2P_A-\Id$ and $R_B=2P_B-\Id$.  
In \cite{Luke05a} and the physics literature this is known as 
\emph{relaxed alternating averaged reflections} or RAAR.
As noted in Example \ref{eg:pr cp},  the phase retrieval problem in its many different manifestations 
in photonic imaging has exactly the structure 
of the functions in Theorem \ref{t:RAAR pr}.   If, in addition, the fixed point operator $T_{DR\lambda}$ 
is metrically subregular at its fixed points relative to the affine hull of the iterates, then according to 
Theorem \ref{t:RAAR pr}, for $\lambda$ large enough and for all starting points close enough to the set of 
fixed points,  the Algorithm \eqref{e:RAAR} applied to the phase retrieval problem converges locally linearly 
to a fixed point.   In contrast to the usual Douglas--Rachford 
algorithm and its variants \cite{BCL2}, the RAAR method does not require that the constraint sets 
intersect.  Still, it is an open problem to determine whether $T_{DR\lambda}$ is usually (in some appropriate sense) 
metrically subregular for phase retrieval. 
\end{eg}

\end{document}